\theoremstyle{plain}
\newtheorem{theorem}{Theorem}[section]
\newtheorem{corollary}[theorem]{Corollary}
\newtheorem{lemma}[theorem]{Lemma}
\newtheorem{proposition}[theorem]{Proposition}
\theoremstyle{definition}
\newtheorem{definition}[theorem]{Definition}
\newtheorem{remark}[theorem]{Remark}
\numberwithin{equation}{section}
\newtheorem*{theorem*}{Theorem}
\newcommand{\R}{{\mathbb R}}
\newcommand{\rn}{{\mathbb R}^n}
\newcommand{\N}{{\mathbb N}}
\newcommand{\Tcal}{\mathcal{T}}
\DeclareMathOperator{\III}{III}
\DeclareMathOperator{\II}{II}
\DeclareMathOperator{\I}{I}
\DeclareMathOperator{\dist}{dist}
\DeclareMathOperator{\diam}{diam}
\DeclareMathOperator{\dive}{div}
\DeclareMathOperator{\supp}{supp}
\providecommand{\ab}[1]{  \lvert  #1  \rvert }
\providecommand{\abs}[1]{ \left \lvert  #1 \right \rvert }
\providecommand{\no}[1]{  \lVert  #1  \rVert }
\providecommand{\la}[1]{  \langle #1 \rangle}
\providecommand{\ddd}{\, dx dt }
\providecommand{\trm}[1]{\textrm{#1}}
\providecommand{\seb}[1]{\textcolor{blue}{#1}}
\title[Right inverse of the divergence]{Construction of a right inverse for the divergence in non-cylindrical time dependent domains}
\author{Olli Saari}
\author{Sebastian Schwarzacher}
\address{Mathematical Institute, 
	University of Bonn,
	Endenicher Allee 60, 53115, Bonn,
	Germany}
	\email{saari@math.uni-bonn.de}
\address{Katedra matematick\'e analy\'zy, Matematicko-fyzik\'aln\'\i\
  fakulta Univerzity Karlovy, Sokolovsk\'a 83, 186 75 Praha 8, Czech Republic}
  \email{schwarz@karlin.mff.cuni.cz}
\subjclass{26D10, 
  35Q30, 
  35Q35, 
  46E35
  }
\keywords{Divergence equation,
Bogovskij operator,
Sobolev spaces,
non-cylindrical space-time domains, 
H\"older domains,
Navier--Stokes equations, 
pressure estimates}
\begin{document}

\begin{abstract}
  We construct a stable right inverse for the divergence operator
  in non-cylindrical domains in space-time. 
  The domains are assumed to be H\"older regular in space and evolve continuously in time.
  The inverse operator is of Bogovskij type, 
  meaning that it attains zero boundary values. 
  We provide estimates in Sobolev spaces of positive and negative order 
  with respect to both time and space variables.
  The regularity estimates on the operator depend 
  on the assumed H\"older regularity of the domain. The results can naturally be connected to the known theory for Lipschitz domains. The most precise estimates are given in weighted spaces, where the weight depends on the distance to the boundary. This allows for the deficit to be captured precisely in the vicinity of irregularities of the boundary.
  As an application, 
  we prove refined pressure estimates for weak and very weak solutions to Navier--Stokes equations in time dependent domains.
\end{abstract}

\maketitle

\section{Introduction}
Consider an open and connected subset $ \Omega \subset \R^{1+n} $ of the space-time.
We denote by 
\[\Omega_t:=\{t\}\times \{x:(t,x)\in \Omega\}\] 
the time slice of $\Omega$ at time $t\in \R$.
Also the time slices are assumed to be connected.
We study the problem of constructing vector fields 
\[u=(u_1,...,u_n):\Omega\to \R^n\] 
that satisfy
\begin{align*}
\begin{aligned}
\dive u(t,x) := \sum_{i=1}^{n}\partial_{x_{i}} u_i(t,x_1, \ldots , x_n) &= f(t,x), \quad (t,x) \in \Omega,
\\
u(t,x)&=0, \quad (t,x) \in \bigcup_{t \in \R}\partial\Omega_t,
\end{aligned}
\end{align*}
where the functionals $ f(t, \cdot) $ are subject to $\la{f(t,\cdot),1} = 0$.
For brevity, 
we call the spatial divergence operator just divergence in what follows.

The existence and properties of the right inverse 
are directly linked to applications in continuum mechanics. 
Most prominent are the applications in fluid mechanics, 
see for example~\cite{MR2808162,FeiNovPel01} and Subsection~\ref{sec:introfluid}.
Domains that vary in time are certainly meaningful
from the physical point of view,
and not surprisingly, 
an increasing body of literature has been devoted to
the mathematical theory of fluids in this setting,
see for instance \cite{canic2020moving} and the references therein.  
Our construction of a stable-in-time right inverse 
extends these results. 
Below, we use the here constructed right inverse in order to  construct pressure terms in natural associate spaces, provided a velocity field obeying the (very) weak incompressible Navier--Stokes equations
has been given.

Although the divergence equation has been studied quite extensively 
in domains independent of time, see \cite{MR3618122},
the treatise of time-dependent domains seems to be completely missing
in the literature. 
In addition, 
it is difficult to find a complete treatment of H\"older regular domains in the literature,
even in the steady setting.
Namely,
first order Sobolev estimates in planar domains are treated in \cite{MR2198138},
extremely general results in \cite{MR2674865} certainly cover the H\"older setting,
and even sharp results are likely to follow from the treatise in \cite{MR3273166},
but it is still behind a modicum of work to extract the desired estimates from these references.
The objective of the present paper is hence twofold. 
We treat a class of non-cylindrical settings in space-time as our main task,
but as a side product, 
we also recover results for H\"older regular domains in the steady setting.

As the divergence is a pure space operator,
one might expect that its right inverse would commute with differentiation in time.
However,
if one asks the solution to satisfy a boundary condition,
the right inverse starts depending on time through the evolution of the boundary,
and the question about time regularity becomes non-trivial.
The dependency of the right inverse on the geometry of the domain
is typically very in-explicit
in the stationary setting,
and solutions regular in time cannot be produced by
a straight-forward repetition of the constructions slice by slice.
Such a slicing argument is enough for some of the usual applications,
such as slice-wise versions of Korn's inequality and Lions--Ne\v{c}as theorem on negative norms,
but it is not sufficient for the application we present at the end of the introduction.

We propose a space-time approach
and construct solutions that have some regularity in the time variable
and satisfy the same space regularity and the same boundary conditions
as solutions constructed for a single time slice are expected to satisfy.
We demonstrate how improving the regularity of the boundary 
reflects as improved regularity of the solution operator. The most precise estimates are given in weighted spaces, where the weight depends on the distance to the boundary. The approach seems to appear naturally once singular boundaries are allowed (see \cite{MR2674865}). Indeed, it allows for the deficit to be captured precisely in the vicinity of irregularities of the boundary, hence underlying the {\em locality} of the operator; or in other words revealing the strength of the construction for losing regularity merely close to singularities.
Our construction combines insights from several existing solution strategies in the stationary setting \cite{MR2643399,MR2674865,MR631691} as well as a novel component to deal with the time variable,
our main object of interest.

Before stating our main results,
we clarify the background and
briefly review the theory on solving the divergence in the stationary set-up.
The divergence equation in its usual applications is ill-posed,
and solutions are never expected to be unique,
not even under a boundary condition.
The solution given by the Bogovskij--Sobolev (briefly Bogovskij in what follows) integral formula \cite{MR631691}
\begin{equation*}
  B_{B(0,1)} f (x) := \int_{\rn} f(y)(x-y) \int_{1}^{\infty} \delta^{-n} b\left( y + r(x-y)  \right) r^{n-1} \, d r d y
\end{equation*} 
with $ f $ a test function, and $ b : \rn \to [0,1] $ a smooth bump function of total mass one and support in $B(0,1)$, has the remarkable property
of preserving the class of test functions compactly supported in a domain star-shaped 
with respect to $ B(0,1) $.
By Calder\'on--Zygmund theory, it maps $ W^{s,p} $ to $ W^{s+1,p} $ 
for all $ 1<p<\infty $ and all $ s \in \R $,
and hence Bogovskij's formula gives solutions whose boundary values are zero
in the Sobolev sense.
A by-now-standard argument implies existence of a solution operator with these properties in arbitrary Lipschitz domains \cite{MR0227584}.
The line integrals in the Bogovskij formula can be replaced by more general curves,
and based on this modification, 
the Bogovskij formula can be generalized to bounded John domains \cite{MR2263708}.

Every John domain, 
or more generally every domain satisfying an emanating chain condition,
admits a decomposition operator mapping $ L^{p} $ functions with mean zero 
boundedly to $ \ell^{p} $ sequences of compactly and disjointly supported $ L^{p} $ functions,
again with mean zero. 
Such a decomposition operator has been constructed in \cite{MR2643399}, 
and it can be applied 
to prove a weighted Poincar\'e inequality 
and to solve the divergence equation on $ A_p $ weighted Sobolev spaces,
thus providing an alternative approach to solving the divergence.
The formula of the decomposition operator resembles 
a discrete Bogovskij formula,
and it is indeed shown in \cite{MR2674865} 
that solvability of the divergence in certain weighted $ L^{p} $ spaces 
conversely implies the existence of a decomposition operator as in \cite{MR2643399}.
The divergence equation in weighted $ L^{p} $, in turn,
can be solved by means of an abstract duality argument based on an improved Poincar\'e inequality,
and to close the circle,
the result in \cite{MR3178379} shows
that solvability of the divergence equation,
an improved Poincar\'e inequality
and the John condition 
are equivalent for simply connected and bounded planar domains.
A Korn type inequality is added to the list of equivalences in \cite{MR3669778}.
Other results in the stationary setting include extensions to Besov and Triebel-Lizorkin spaces~\cite{CosMcI10}, measure data problems \cite{moonens2020solvability} and boundary value problems in non-smooth domains \cite{MR2425010},
to mention a few.

All the constructions involved in the results above are somewhat in-explicit in their dependency on the geometry of the domain,
but the last example indicates that the geometry plays a prominent role in the construction.
Bogovskij's original formula, which is domain independent, is an exception.
Applying either that or any of the more complicated constructions 
to functions of the space-time by plainly ignoring the time variable,
we see that domains either independent of time
or staying star-shaped with respect to a fixed ball 
can be treated easily,
the stationary results holding on all time slices and 
the construction commuting with the time derivatives.
All other domains seem to be beyond reach for the method of trivial extension in time,
and this is the starting point of our work.

We denote by $ C^{\alpha,\beta,\theta} $ the class of domains enclosed by graphs of functions 
that are $ \beta$-H\"older continuous in space, 
$\alpha$-H\"older continuous in time
and whose boundaries are thin according to parameter $\theta$, 
see Definition \ref{def:domains}.
For every bounded H\"older domain there is an admissible parameter $\theta \in [\beta,1]$.
The case $ \alpha, \beta , \theta = 0 $ is included as the general bounded graph domains.
The graph domains can allow exterior cusps unlike John domains,
but typical fractal boundaries such as the von Koch snowflake are excluded.
Fractal boundaries are possible for John domains,
and hence the class of graph domains is incomparable with the John condition.
A more general $ s $-John condition would unify both conditions,
but its treatment is beyond our reach in the time-dependent setting.
We refer to Subsection \ref{sec:domain} for a more thorough discussion of the assumptions on domains.

For the rest of the introduction,
we let $\theta \in [0,1]$ and fix a domain $ \Omega $ in $ C^{0,0,\theta} $.
We assume $\Omega \subset (0,T) \times \R ^{n}$ for some $T > 0$
and specify the meaning of the number $\theta$ as 
\begin{equation}
  \label{eq:defthetaintro}
\sup_{t} | \{ x \in \Omega_t : \dist(x,\partial \Omega_t) \le \varepsilon  \}| \le C \varepsilon^{\theta} 
\end{equation}
holding for all $\varepsilon > 0$.
In general, $\theta $ can be zero.
If all $\Omega_t$ are $\beta$-H\"older,
then $\theta \ge \beta $,
and in the case of a Lipschitz domain, 
whose regularity is possibly broken by finitely many power type cusps,
we can take $\theta = 1$.
In Theorem \ref{thm:bogovski_on_w1p}, 
we construct a linear operator $ B $
acting on test functions in (slice mean zero class)
\[
 C_{smz}^{\infty}(\Omega) :=  \left \lbrace f \in C^{\infty}(\Omega) \cap C(\overline{\Omega}):  \sup_{t} \left \lvert\int f(t,x) \, dx \right  \rvert  = 0 \right \rbrace
\]
such that for $ f \in C_{smz}^{\infty}(\Omega) $ with compact support
\[
  \dive B f = f, \quad B f \in C_0 ^{\infty}(\Omega).
\]
The focus of our work is on local properties of Bogovskij operators close to the boundary. The large-scale geometric set-up is kept rather general as are the dependencies on them in the various norm estimates provided in this work. Therefore (in order not to distract from the qualitative properties of the here introduced operator), we keep the dependencies of the bounds on the geometry in the statements in the introduction implicit and collect them in Remark \ref{remarks-after-proof}. We believe that there is room of improvement for sharp and explicit dependencies on the large-scale geometric parameters of domains, if further restriction are made.

The stability and regularity estimates for the operator
are either weighted estimates or corollaries of weighted estimates.
The weight is always the distance to the spatial boundary of the domain with some power. 
Such dependencies are natural in two ways.
First, they show that the smoothness in the interior is always satisfied.
Second, they show that the zero trace of our inverse is attained in a sense weaker than usual
if the boundary of the domain is less regular than Lipschitz in space.

The best-behaved special case of spatially Lipschitz regular domains
admits estimates slightly weaker 
than what is known in the case of cylindrical domains here.
The time derivative is effected by the evolution of the boundary 
as indicated by the weight function appearing on the right hand side of the estimate.
The weight function does not appear when the domain is uniformly star-shaped
in the sense that each time slice is star-shaped with respect to a fixed ball.
The Bogovskij operator defined with respect to this fixed ball then acts as a right inverse of the divergence in the whole time dependent domain.
However, we believe that when leaving this regime 
(in particular when going beyond Lipschitz regularity
as we do in this paper) a defect with a weight function is unavoidable.

\begin{theorem}
  Let $0 < \alpha \le 1$.
  Assume $ \Omega $ to be a $ C^{\alpha,1,\theta} $ domain,
  $ 1<p< \infty$ and
  $ \kappa, k \ge 0 $ be integers. 
  Then for all compactly supported test functions $f \in C_{smz}^{\infty}(\Omega)$ and all times $t$
  \begin{multline*}
  \no{\partial_t^{\kappa}Bf(t,\cdot)}_{\dot{W}^{k+1,p}(\Omega_t)} \\
  \le C \left( \sum_{\lambda=0}^{\kappa} \sum_{ |\gamma |=k} \int_{\Omega_t} |\partial_t^{\lambda}\partial^{\gamma} f(t,x) |^{p} \dist(x,\partial \Omega_t)^{p(\lambda- \kappa)/\alpha} \, dx \right)^{1/p}
  \end{multline*}
  with $ C $ only depending on $ \Omega $, $ k $, $ \kappa $ and $ p $.
\end{theorem}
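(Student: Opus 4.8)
The plan is to read the estimate off the explicit construction of the operator $B$ given in Theorem~\ref{thm:bogovski_on_w1p}, combined with the stationary weighted divergence theory on star-shaped domains. Recall that $B$ is assembled from three time-dependent ingredients: a locally finite covering of $\bigcup_t\{t\}\times\Omega_t$ by moving pieces $U_j(t)$, each star-shaped with respect to a ball $b_j(t)\subset U_j(t)$ whose radius is comparable to the scale $\ell_j(t)\sim\dist(U_j(t),\partial\Omega_t)$ of $U_j(t)$ away from a fixed central piece; a subordinate partition of unity $\varphi_j$; and mean-zero corrections $c_j$ obtained by transporting mass along chains of neighbouring pieces towards the central one. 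On each piece one applies the classical Bogovskij operator $B_{U_j(t)}$, and $Bf(t,\cdot)=\sum_j B_{U_j(t)}(\varphi_j f-c_j)$, so that $\dive Bf=f$ and $Bf\in C_0^\infty(\Omega)$. Since $\Omega$ is Lipschitz in space (the exponent $1$ in $C^{\alpha,1,\theta}$), the covering has bounded overlap, the aspect ratios of the pairs $b_j(t)\subset U_j(t)$ are uniform, the chains have the bounded-length property of John domains, and $\theta=1$ is absorbed into the constant. Moreover all geometric data can be taken smooth in $x$ with $|\partial_x^\gamma(\cdot)|\lesssim\ell_j^{-|\gamma|}$ and regularized in $t$ at scale $\ell_j^{1/\alpha}$, the largest time interval over which an $\alpha$-H\"older-in-time boundary graph cannot move by more than $\ell_j$; consequently $|\partial_t^\mu(\cdot)|\lesssim\ell_j^{-\mu/\alpha}$ for the cutoffs $\varphi_j$, the star-centres $b_j$ and the chain weights appearing in $c_j$.

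The case $\kappa=0$ is the stationary estimate. On each $U_j(t)$ the classical weighted bound \cite{MR631691} gives $\|B_{U_j(t)}g\|_{\dot{W}^{k+1,p}(U_j(t))}\lesssim\|g\|_{\dot{W}^{k,p}(U_j(t))}$ with a constant depending only on the aspect ratio; applied to $g=\varphi_j f-c_j$, with the product rule on $\varphi_j f$ and the bounds $|\partial_x^\gamma\varphi_j|\lesssim\ell_j^{-|\gamma|}$, summed over $j$ using bounded overlap, and with the lower-order and chain-correction terms absorbed by the weighted Poincar\'e/Hardy argument along chains as in \cite{MR2643399}, this yields $\|Bf(t,\cdot)\|_{\dot{W}^{k+1,p}(\Omega_t)}\lesssim\|f(t,\cdot)\|_{\dot{W}^{k,p}(\Omega_t)}$.

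For $\kappa\ge1$ one differentiates $Bf(t,\cdot)=\sum_j B_{U_j(t)}(\varphi_j f-c_j)$ in $t$ and expands by Leibniz; the $\kappa$ derivatives distribute among the datum $f$, the cutoffs and chain weights, and the Bogovskij kernel of $B_{U_j(t)}$ through the motion of $b_j(t)$. In a term where $\lambda$ of them fall on $f$ one is left with $\partial_t^\lambda f$, while each of the remaining $\kappa-\lambda$ derivatives hits a geometric object attached to $U_j(t)$ and, by the time regularization at scale $\ell_j^{1/\alpha}$, contributes a factor $\lesssim\ell_j^{-1/\alpha}$, which on $U_j(t)$ is comparable to the constant $\dist(x,\partial\Omega_t)^{-1/\alpha}$. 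For the kernel this is the elementary computation that, since $|\partial_t b_j|\lesssim\ell_j^{1-1/\alpha}$ and the kernel depends on the centre on scale $\ell_j$, each $t$-derivative costs the factor $\ell_j^{1-1/\alpha}\ell_j^{-1}=\ell_j^{-1/\alpha}$ while leaving the Calder\'on--Zygmund structure intact, so the differentiated operator still maps $\dot{W}^{k,p}(U_j(t))$ to $\dot{W}^{k+1,p}(U_j(t))$. Applying the $\kappa=0$ estimate of the previous paragraph to each Leibniz term, pulling out the (essentially constant) weight $\ell_j^{-(\kappa-\lambda)/\alpha}$ on each piece, and summing over $j$ with bounded overlap and over $0\le\lambda\le\kappa$ bounds $\|\partial_t^\kappa Bf(t,\cdot)\|_{\dot{W}^{k+1,p}(\Omega_t)}$ by the right-hand side of the theorem, the weight being trivial precisely on the term $\lambda=\kappa$.

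The main obstacle is the construction of the moving geometric data — above all the corrections $c_j$ — as genuine $C^\infty$ functions of $(t,x)$ with the bounds $|\partial_t^\mu\partial_x^\gamma(\cdot)|\lesssim\ell_j^{-\mu/\alpha-|\gamma|}$. The natural route is to produce first a single regularized boundary distance $\delta(t,x)\approx\dist(x,\partial\Omega_t)$ which is smooth in $x$ with $|\partial_x^\gamma\delta|\lesssim\delta^{1-|\gamma|}$ and is obtained by mollifying the boundary graph in $t$ at the position-dependent scale $\delta^{1/\alpha}$, so that $|\partial_t^\mu\delta|\lesssim\delta^{1-\mu/\alpha}$ — here both the $\alpha$-H\"older regularity in time and the Lipschitz regularity in space are used — and then to realize the covering, the cutoffs, the star-centres and the chains as level-set constructions built from $\delta$, which then inherit the bounds automatically. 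Checking that the chain corrections survive this time regularization with the bounded-length property intact, and that no step of the construction degrades the exponent $1/\alpha$, is the technical heart of the proof.
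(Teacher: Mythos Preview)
Your description of the construction is not the paper's. The operator in Theorem~\ref{thm:bogovski_on_w1p} is not assembled from moving star-shaped pieces $U_j(t)$ with moving Bogovskij kernels and chain corrections \`a la \cite{MR2643399}. Instead, the paper first builds a crude auxiliary operator $B'$ (Subsection~\ref{sec:auxop}) satisfying only weighted $L^p$ bounds: near the boundary it is the line integral of Proposition~\ref{prop:bdry_bogovski} along the graph direction, which \emph{commutes exactly with} $\partial_t$, glued to a coarse interior piece. One then forms a space-time Whitney decomposition into cylinders $P_j$ of spatial side $r_j$ and time height $\delta(r_j)\sim r_j^{1/\alpha}$, subordinates a partition of unity $\chi_j$, and sets $f_j=\dive(\chi_j B'f)$. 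Each $f_j$ is automatically mean-zero and supported in $P_j^{*}$---no chain correction is needed---and $Bf=\sum_j B_{Q_j}f_j$ with the \emph{time-independent} reference Bogovskij on the fixed spatial cube $Q_j$. All time derivatives therefore fall either on $\chi_j$, producing factors $\delta(r_j)^{-1}\sim r_j^{-1/\alpha}$, or pass through $B'$ to $f$; the Bogovskij kernels are never differentiated in $t$, and the obstacle you single out as ``the technical heart'' simply does not arise.

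The stated theorem is then \eqref{eqthm:highsot} of Theorem~\ref{thm:high} specialized to $\beta=1$, which gives a right-hand side summed over $|\gamma|\le k$ with weights $d^{(|\gamma|-k)p}$, combined with Corollary~\ref{cor:highso_lip}: since a $C^{\alpha,1,\theta}$ domain is Lipschitz in space, Hardy's inequality \eqref{eq:hardy_lipschitz} for compactly supported $f$ absorbs the lower-order terms into the top-order ones, leaving only $|\gamma|=k$. Your sketch gestures at a Poincar\'e/Hardy step but does not isolate this as the place where the Lipschitz hypothesis $\beta=1$ is actually used. Your alternative route via a regularized distance and smooth-in-time chain structure could perhaps be carried out, but you have not resolved the difficulty you yourself flag---chains are combinatorial and can jump as the geometry evolves---and the paper's two-step construction with the auxiliary $B'$ is precisely what avoids it.
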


The first case of our main results to be highlighted is that of first order bounds 
in general $ C^{\alpha,\beta,\theta} $ domain.
\begin{theorem}
  Assume that $ \Omega $ is $ C^{\alpha,\beta,\theta} $ with $\alpha, \beta, \theta \in (0,1] $,
  $ 1<p,q<\infty $ and 
  $ \kappa \ge 0 $ an integer.
  Then the a priori bound
  \begin{multline*}
  \no{\partial_t^{\kappa} \nabla Bf(t, \cdot )}_{L^{q}(\Omega_t)} \\
  \le 
  C \left( \sum_{\lambda=0}^{\kappa} \int_{\Omega_t} |\partial_t^{\lambda} f(t,x) |^{p} \dist(x,\partial \Omega_t)^{p(\lambda- \kappa)/(\alpha \beta)} \, dx \right)^{1/p} 
  \end{multline*}
  holds for all test functions $f \in C_{smz}^{\infty}(\Omega)$, all $t$ and all $q > 0$ such that  
  \[
  \frac{(1+\varepsilon)(1-\beta)}{\theta} \le \frac{1}{q} - \frac{1}{p}
  \]
  for some $\varepsilon > 0$.
  The constant $ C $ only depends on $ \Omega $, $ \kappa $, $ p $ and $ q $.
\end{theorem}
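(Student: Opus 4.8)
The plan is to derive the unweighted $L^{q}$ bound from a \emph{weighted} $L^{p}$ bound whose weight is a small positive power of the distance to the spatial boundary, the admissible loss of integrability from $p$ down to $q$ being governed by the thinness parameter $\theta$ through \eqref{eq:defthetaintro}. (When $\beta=1$ the constraint forces $q\le p$ and the claim follows from the previous theorem together with $L^{p}(\Omega_t)\subset L^{q}(\Omega_t)$, so we may assume $\beta<1$.) Concretely, the first step is to prove, for every $s>1-\beta$, the a priori estimate
\begin{equation*}
  \no{\dist(\cdot,\partial\Omega_t)^{s}\,\partial_t^{\kappa}\nabla Bf(t,\cdot)}_{L^{p}(\Omega_t)} \le C\Big(\sum_{\lambda=0}^{\kappa}\int_{\Omega_t}|\partial_t^{\lambda}f(t,x)|^{p}\dist(x,\partial\Omega_t)^{p(\lambda-\kappa)/(\alpha\beta)}\,dx\Big)^{1/p},
\end{equation*}
with $C$ depending on $\Omega,\kappa,p$ and on $s$, deteriorating as $s\downarrow 1-\beta$.

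For that one returns to the construction of $B$ from Theorem \ref{thm:bogovski_on_w1p}. At each fixed time the operator is built from local Bogovskij operators along the emanating chains of a Whitney-type decomposition $\{Q\}$ of $\Omega_t$, and $f$ is split correspondingly as $f=\sum_Q f_Q$ with $\supp f_Q$ in a fixed dilate of $Q$ and $\int f_Q=0$. Crucially, on each $Q$ the rescaled classical Bogovskij inequality gives $\no{\nabla B_Q f_Q}_{L^p}\lesssim\no{f_Q}_{L^p}$ with a constant depending only on $n$ and $p$, since Whitney cubes are isotropic; the H\"older geometry of the slice never enters the local bound, only the combinatorics connecting the cubes. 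Writing $\no{\dist^{s}\nabla Bf}_{L^p(\Omega_t)}^{p}\sim\sum_Q\ell(Q)^{sp}\no{\nabla Bf}_{L^p(Q)}^{p}$, expanding $\nabla Bf$ on $Q$ as a sum over those cubes whose chain to the centre passes through $Q$, interchanging the order of summation, bounding the accumulated chain weights for a $\beta$-H\"older slice (these grow like a negative power of the cube side and are summable against $\ell(Q)^{sp}$ precisely because $s>1-\beta$), and counting boundary cubes of a given scale by \eqref{eq:defthetaintro}, one obtains the displayed estimate for $\kappa=0$. For $\kappa\ge1$ one expands $\partial_t^{\kappa}B=\sum_{\lambda=0}^{\kappa}$ of operators in which $\partial_t^{\lambda}$ falls on $f$ and the remaining $\kappa-\lambda$ derivatives fall on the time-dependent transport maps of the chain; each such derivative, by the $\alpha$-H\"older-in-time and $\beta$-H\"older-in-space regularity of the graph, costs a factor comparable to $\dist(\cdot,\partial\Omega_t)^{-1/(\alpha\beta)}$, which after the same summation produces the weight power $p(\lambda-\kappa)/(\alpha\beta)$ on the right.

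Granting the weighted estimate, the passage to the theorem is a single H\"older inequality. Given $q$ as in the hypothesis, choose $s$ with $1-\beta<s$ and $s/(1/q-1/p)<\theta$; this is possible because $\frac{(1+\varepsilon)(1-\beta)}{\theta}\le\frac1q-\frac1p$ gives $(1+\varepsilon)(1-\beta)\le\theta(\tfrac1q-\tfrac1p)$, so $s=(1+\tfrac{\varepsilon}{2})(1-\beta)$ works. Writing $|\partial_t^{\kappa}\nabla Bf|^{q}=(\dist^{s}|\partial_t^{\kappa}\nabla Bf|)^{q}\dist^{-sq}$ and applying H\"older with exponents $p/q$ and $p/(p-q)$,
\[
  \no{\partial_t^{\kappa}\nabla Bf(t,\cdot)}_{L^q(\Omega_t)}\le\no{\dist^{s}\partial_t^{\kappa}\nabla Bf(t,\cdot)}_{L^p(\Omega_t)}\Big(\int_{\Omega_t}\dist(x,\partial\Omega_t)^{-sqp/(p-q)}\,dx\Big)^{(p-q)/(pq)}.
\]
The last integral is finite: splitting $\Omega_t$ into the layers $\{2^{-j-1}\le\dist<2^{-j}\}$ and using \eqref{eq:defthetaintro} gives $\int_{\Omega_t}\dist^{-r}\lesssim\sum_j 2^{j(r-\theta)}<\infty$ when $r<\theta$, and here $r=sqp/(p-q)=s/(1/q-1/p)<\theta$ by the choice of $s$; the part of $\Omega_t$ away from the boundary contributes a bounded amount since $\Omega_t$ is bounded. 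Combining the two displays with the weighted estimate of the first step closes the argument, with $C$ depending on $\Omega,\kappa,p,q$ through the gap $\frac1q-\frac1p-\frac{1-\beta}{\theta}$.

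The main obstacle is the weighted $L^{p}$ estimate, and within it the balance of two opposing sums: the chain weights, which accumulate as one travels from the interior of a slice to a boundary Whitney cube of scale $\ell$ and which for a $\beta$-H\"older slice force $s>1-\beta$, against the number of boundary cubes of that scale, bounded by $\ell^{-\theta}$ via \eqref{eq:defthetaintro}; making the reorganised double sum converge requires these to balance with a margin, which is the origin of the strict inequality $s>1-\beta$ and, after the H\"older step, of the $(1+\varepsilon)$ in the hypothesis. A secondary difficulty is verifying that differentiating the chain's transport maps in time yields only the asserted $\dist^{-1/(\alpha\beta)}$ losses and does not destroy the spatial summability — this is exactly where the full $C^{\alpha,\beta,\theta}$ regularity of the graph functions, not merely their spatial H\"older continuity, is needed.
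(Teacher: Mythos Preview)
Your two-step architecture is exactly the paper's: first a weighted $L^{p}$ bound on $\partial_t^{\kappa}\nabla Bf$ with weight a positive power of $d(t,\cdot)=\dist(\cdot,\partial\Omega_t)$, then Proposition~\ref{prop:tounweighted} (your H\"older-plus-layer argument) to trade the weight for the $L^{q}$ loss governed by~$\theta$. Your execution of the second step is correct and essentially reproduces Proposition~\ref{prop:tounweighted}.

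The gap is in the first step. You describe the operator $B$ of Theorem~\ref{thm:bogovski_on_w1p} as built from ``local Bogovskij operators along the emanating chains of a Whitney-type decomposition'' and speak of ``time-dependent transport maps of the chain''. That is not the paper's construction. The operator $B$ is defined in Sections~\ref{sec:auxop}--\ref{sec:construction} as $Bf=\sum_j B_{Q_j}f_j$ with $f_j=\dive(\chi_j B'f)$, where the auxiliary operator $B'$ is assembled from straight-line integrals $B_{ext,i}$ in the graph-normal direction (Proposition~\ref{prop:bdry_bogovski}) glued to an interior piece $B_{int}$. There are no chains connecting Whitney cubes to a centre in the boundary layer; the only chain argument in the paper is the one in Proposition~\ref{prop:trivial_space}, which lives entirely in the interior where $d\sim 1$ and is irrelevant for the weight.

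This matters precisely for the time derivatives. The paper's mechanism is that $B_{ext,i}$ \emph{commutes with $\partial_t$} (Proposition~\ref{prop:bdry_bogovski}), so time derivatives of $B'f$ reduce to $B'(\partial_t^{\lambda}f)$ via~\eqref{decopt}; the only place $\partial_t$ hits geometry is on the Whitney cut-offs $\chi_j$, which are smooth on space-time cylinders of height $\delta(r_j)\sim r_j^{1/\alpha}$, giving $|\partial_t\chi_j|\lesssim r_j^{-1/\alpha}$. Passing this factor through the Hardy-type bound for $B'$ (where a weight $d^{\beta b}$ on the left corresponds to $d^{b}$ on the right) is what produces the exponent $-1/(\alpha\beta)$. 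By contrast, a genuine chain construction depends on the Whitney decomposition of $\Omega_t$, which changes \emph{discretely} as $t$ varies; there are no ``transport maps of the chain'' that one can differentiate in~$t$ and bound by $d^{-1/(\alpha\beta)}$ without substantial additional work. The paper in fact remarks that the graph-direction line integrals were chosen because ``less explicit constructions would quickly become intractable'' in the time-dependent setting.

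Two smaller points. First, the paper obtains the weighted estimate with the endpoint exponent $s=1-\beta$ (this is \eqref{eqthm:sobolev} combined with the time-derivative computation in the proof of Theorem~\ref{thm:bogovski_on_w1p}, and is the $k=0$ instance of~\eqref{eqthm:highsot}); the strict inequality only enters at the second step. Your $s>1-\beta$ is weaker than necessary. Second, in your sketch of the weighted bound you invoke~\eqref{eq:defthetaintro} already at the chain-summation stage; in the paper $\theta$ plays no role in the weighted $L^{p}$ estimate and appears only in Proposition~\ref{prop:tounweighted}. If your chain argument really needs~$\theta$ twice, the final constraint on $q$ would be worse than stated.
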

As the parameter $\theta$ is bounded from below by $\beta$,
we see that the estimate improves as $\beta$ gets larger.
In addition, if $\partial \Omega_t$ happens to be rectifiable with finite Hausdorff measure,
we get an estimate with $\theta = 1$, compare to Corollary \ref{cor:rectifiable}.

As the second special case,
we state the estimate without time derivatives,
which reproduces the stationary estimates.
\begin{theorem}
\label{thmintro:notime}
  Assume that $ \Omega $ is $ C^{0,\beta,\theta} $ with $ \beta \in (0,1] $,
  $ 1<p,q<\infty $ and 
  $ k  \ge 0 $ an integer.
  Then the a priori bound
  \begin{multline*}
  \no{  Bf(t, \cdot )}_{\dot{W}^{k+1,q}(\Omega_t)} \\
  \le 
  C \left( \sum_{ |\gamma| \le k } \int_{\Omega_t} |\partial^{\gamma} f(t,x) |^{p} \dist(x,\partial \Omega_t)^{p(|\gamma| - k)/\beta} \, dx \right)^{1/p} 
  \end{multline*}
  holds for all test functions $f \in C_{smz}^{\infty}(\Omega)$, all $t$ and all $q > 0$ such that  
  \[
  \frac{(1+\varepsilon)(1-\beta)}{\theta} \le \frac{1}{q} - \frac{1}{p}
  \]
  for some $\varepsilon > 0$.
  The constant $ C $ only depends on $ \Omega $, $ k $, $ p $ and $ q $.
\end{theorem}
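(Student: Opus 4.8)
\emph{Proof sketch for Theorem~\ref{thmintro:notime}.}
As the statement involves no time derivative, the plan is to freeze $t$ and reduce to a stationary weighted bound on the single slice $\Omega_t$, which by hypothesis is a bounded $\beta$-H\"older domain with thinness parameter $\theta$. Recall that the operator $B$ of Theorem~\ref{thm:bogovski_on_w1p} is built so that, for each fixed $t$, it is a linear right inverse of the divergence on $\Omega_t$ of Bogovskij type, associated to a Whitney decomposition and an emanating chain structure of $\Omega_t$; the bound claimed here concerns this single slice only and does not see how $B$ couples the slices. Thus it suffices to show: if $D\subset\R^n$ is bounded and $\beta$-H\"older with $|\{x\in D:\dist(x,\partial D)\le\varepsilon\}|\le C\varepsilon^\theta$ for all $\varepsilon>0$, and $g\in C^\infty(D)\cap C(\overline D)$ has compact support and $\int_D g=0$, then the associated Bogovskij operator $B_D$ satisfies
\[
  \no{B_D g}_{\dot W^{k+1,q}(D)}\le C\Big(\sum_{|\gamma|\le k}\int_D\ab{\partial^\gamma g(x)}^p\dist(x,\partial D)^{p(|\gamma|-k)/\beta}\,dx\Big)^{1/p}
\]
whenever $(1+\varepsilon)(1-\beta)/\theta\le 1/q-1/p$ for some $\varepsilon>0$. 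Uniformity of the Whitney and chain constants over the slices of a $C^{0,\beta,\theta}$ domain then yields a constant independent of $t$.

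For this stationary estimate I would run the Whitney--chain scheme (compare \cite{MR0227584,MR2263708}, with the splitting of $g$ as in \cite{MR2643399,MR2674865}). Fix a Whitney decomposition $\{Q_j\}$ of $D$ and, using the H\"older/chain geometry, attach to each $Q_j$ a chain of overlapping cubes of comparable consecutive sizes joining $Q_j$ to a fixed central cube. Apply the decomposition operator to write $g=\sum_j g_j$ with $\supp g_j$ in a fixed dilate $Q_j^\ast$, $\int g_j=0$, and each $g_j$ consisting of $g\chi_{Q_j}$ together with bump corrections transporting the flux of $g$ along the chains. On each cube apply the ball Bogovskij operator \cite{MR631691} to obtain $u_j:=B_{Q_j^\ast}g_j$ with $\dive u_j=g_j$, $\supp u_j\subset Q_j^\ast$, and the scale-invariant Calder\'on--Zygmund bounds $\no{\nabla^{k+1}u_j}_{L^q(Q_j^\ast)}\lesssim\sum_{m\le k}\ell(Q_j)^{m-k}\no{\nabla^m g_j}_{L^q(Q_j^\ast)}$. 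Then $B_D g:=\sum_j u_j$ has $\dive B_D g=g$, and bounded overlap of the $Q_j^\ast$ gives $\no{\nabla^{k+1}B_D g}_{L^q(D)}^q\lesssim\sum_j\no{\nabla^{k+1}u_j}_{L^q(Q_j^\ast)}^q$.

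The decisive and, I expect, hardest step is to dominate $\sum_j(\,\cdot\,)^q$ by the weighted $L^p$ norm of $g$, and this is exactly where the condition on $1/q-1/p$ is used. On a cube $Q_j$ we have $\ell(Q_j)\approx\dist(Q_j,\partial D)=:d_j$, so the factor $\ell(Q_j)^{m-k}$ supplies the correct weight $d_j^{(m-k)/\beta}$ up to the extra factor $d_j^{(m-k)(\beta-1)/\beta}$, which has a nonnegative exponent and is harmless; the genuine loss comes from the bump corrections, which concentrate onto $Q_j$ the mass of $g$ over the entire shadow of $Q_j$ in $D$, a redistribution whose cost in a $\beta$-H\"older domain grows polynomially in $d_j^{-1}$ with exponent controlled by $1-\beta$. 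Grouping the sum by Whitney generations and using that the number of cubes with $d_j\approx 2^{-\ell}$ is $\lesssim 2^{\ell(n-\theta)}$ by \eqref{eq:defthetaintro}, one then applies H\"older's inequality with exponents $p/q$ and $(p/q)'$ on each generation and along each chain, turning the double sum into a geometric series in $2^\ell$ whose ratio is strictly below one precisely when $(1+\varepsilon)(1-\beta)/\theta\le 1/q-1/p$; the parameter $\varepsilon$ is what forces the strict gap needed for convergence. Carrying the lower-order contributions $|\gamma|<k$ through the same estimates produces exactly the weights $\dist^{p(|\gamma|-k)/\beta}$, and collecting everything gives the bound with $C=C(D,k,p,q)$, hence $C=C(\Omega,k,p,q)$.
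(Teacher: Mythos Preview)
Your sketch describes a construction different from the paper's operator $B$, and this is the main discrepancy. You describe $B$ as ``associated to a Whitney decomposition and an emanating chain structure of $\Omega_t$'', but the paper's $B$ is \emph{not} chain-based. It is built from a Whitney partition $\{\chi_j\}$ together with an auxiliary operator $B'$ (see \eqref{eq:bprime}) that exploits the local graph coordinates: near the boundary, $B'$ is a one-dimensional antiderivative in the graph direction (Proposition~\ref{prop:bdry_bogovski}), glued to an interior piece. The localized data are $f_j = \dive(\chi_j B'f)$, and the weight $d^{(|\gamma|-k)p/\beta}$ on the right-hand side emerges from the weighted $L^p$ bound for $B'$ --- ultimately from Hardy's inequality \eqref{eq:hardy_classical} combined with the comparison $d(t,x) \le |x_n-\psi(t,x')| \le C\, d(t,x)^\beta$ valid for a $\beta$-H\"older graph. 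The unweighted $L^q$ norm on the left then follows in a single step from the weighted $L^p$ estimate $d^{(1-\beta)p}L^p$ via Proposition~\ref{prop:tounweighted} (a plain H\"older argument), which is precisely where the gap condition $(1-\beta)/\theta < 1/q - 1/p$ enters.

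Your chain scheme is a legitimate alternative route for the purely stationary statement and is in the spirit of \cite{MR2643399,MR2674865} applied to an $s$-John domain with $s=1/\beta$, but the sketch is vague exactly where it matters. You observe correctly that the local Bogovskij bound on a cube already gives the better weight $d_j^{m-k}$ and that the remaining loss must come from the chain corrections, but ``cost grows polynomially in $d_j^{-1}$ with exponent controlled by $1-\beta$'' does not yet show that this produces precisely $d^{(|\gamma|-k)/\beta}$; in a $1/\beta$-John chain scheme that factor comes from quantitative control on chain lengths and shadow overlaps, and carrying higher derivatives through that bookkeeping is nontrivial and not supplied here. The paper sidesteps this entirely: the graph-direction antiderivative makes the $1/\beta$ appear in one line, and the generation-sum you propose at the end is replaced by the two-line Proposition~\ref{prop:tounweighted}. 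Either approach buys the same estimate in the end, but the paper's has the advantage that the same auxiliary $B'$ commutes with $\partial_t$, which is what later yields the time-derivative bounds; a chain construction on each slice would not.
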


Under an additional assumption on the domain that Hardy's inequality hold, 
see \eqref{eq:hardy_lipschitz},
the estimate above can be simplified further.
An important example of such domains are the simply connected and bounded planar $\beta$-H\"older domains $\Omega_t$.
\begin{theorem}
Assume the notation of Theorem \ref{thmintro:notime}
and assume in addition that $\Omega_t$ satisfies Hardy's inequality~\eqref{eq:hardy_lipschitz} for $b\leq 0$.
Under the same conditions on $p$ and $q$ as above,
there is a constant $C$ such that for all compactly supported $f \in C_{smz}^{\infty}(\Omega)$
\[
    \no{  Bf(t, \cdot )}_{\dot{W}^{k+1,q}(\Omega_t)} \le C \no{  f(t, \cdot )}_{\dot{W}^{k,p}(\Omega_t)} .
\]
\end{theorem}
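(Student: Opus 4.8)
The plan is to obtain this as a direct consequence of Theorem~\ref{thmintro:notime}, the only extra ingredient being the assumed Hardy inequality \eqref{eq:hardy_lipschitz}, which is used to absorb the distance weights appearing on the right-hand side of that theorem. Since $\no{f(t,\cdot)}_{\dot{W}^{k,p}(\Omega_t)}$ is comparable to $\no{\nabla^{k}f(t,\cdot)}_{L^{p}(\Omega_t)}$ for functions with compact support in $\Omega_t$, it suffices, in view of Theorem~\ref{thmintro:notime}, to establish that for every $t$
\[
\sum_{|\gamma|\le k}\int_{\Omega_t}|\partial^{\gamma}f(t,x)|^{p}\dist(x,\partial\Omega_t)^{p(|\gamma|-k)/\beta}\,dx\le C\int_{\Omega_t}|\nabla^{k}f(t,x)|^{p}\,dx
\]
with $C$ independent of $f$. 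The top-order term, $|\gamma|=k$, carries the trivial weight and is already of the desired shape, so the task reduces to controlling the lower-order terms, indexed by multi-indices with $j:=|\gamma|<k$, which carry the singular weight $\dist(x,\partial\Omega_t)^{p(j-k)/\beta}$ of negative exponent.

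The key point is that $f(t,\cdot)$ and all of its spatial derivatives lie in $C^{\infty}_{c}(\Omega_t)$, because the time slice of the compact set $\supp f$ is compactly contained in $\Omega_t$; hence \eqref{eq:hardy_lipschitz} applies to $g=\partial^{\gamma}f(t,\cdot)$ and, inductively, to all its further derivatives. Each application of \eqref{eq:hardy_lipschitz} trades one spatial derivative for a factor $\dist(x,\partial\Omega_t)^{p/\beta}$ in the weight, that is, it passes from an estimate with weight exponent $s$ to one with exponent $s+p/\beta$ at the price of one extra derivative. Since the exponents occurring in the sum, $s_j=p(j-k)/\beta$ with $j=0,\dots,k$, form an arithmetic progression of common difference $p/\beta$, they telescope: iterating \eqref{eq:hardy_lipschitz} exactly $k-j$ times on the term with $|\gamma|=j$ raises the differentiation order to $k$ and the weight exponent to $s_k=0$, so that
\[
\int_{\Omega_t}|\partial^{\gamma}f(t,x)|^{p}\dist(x,\partial\Omega_t)^{p(j-k)/\beta}\,dx\le C\sum_{|\delta|=k}\int_{\Omega_t}|\partial^{\delta}f(t,x)|^{p}\,dx\le C\no{\nabla^{k}f(t,\cdot)}_{L^{p}(\Omega_t)}^{p},
\]
with $C=C(n,k,p,\beta,\Omega)$. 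Summing over $|\gamma|\le k$ and invoking Theorem~\ref{thmintro:notime}, whose admissible ranges of $p$ and $q$ carry over unchanged, completes the proof.

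The step that requires genuine care is verifying that all the intermediate exponents $s_0,\dots,s_{k-1}$ lie in the range of weights for which \eqref{eq:hardy_lipschitz} is valid on $\Omega_t$; this is precisely what the hypothesis that $\Omega_t$ satisfies Hardy's inequality for negative powers provides, and it is essential that the gain per derivative is the $\beta$-adjusted exponent $p/\beta$ rather than the classical $p$, since only this matches the weight produced by Theorem~\ref{thmintro:notime}. Once this is granted, the argument is a finite induction with no further subtlety; I would close by recording that bounded, simply connected, planar $\beta$-H\"older domains form a family of examples for which \eqref{eq:hardy_lipschitz} is available, so the simplified estimate is non-vacuous in that setting.
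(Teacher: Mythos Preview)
Your argument contains a genuine gap. You claim that each application of Hardy's inequality \eqref{eq:hardy_lipschitz} raises the weight exponent by $p/\beta$, but inspection of \eqref{eq:hardy_lipschitz} shows the gain is exactly $p$: the weight goes from $\dist^{p(b-1)}$ to $\dist^{pb}$. Iterating $k-j$ times on the term with $|\gamma|=j$ therefore produces
\[
\int_{\Omega_t}|\partial^{\gamma}f|^{p}\,d^{\,p(j-k)/\beta}\,dx
\;\le\; C\int_{\Omega_t}|\nabla^{k}f|^{p}\,d^{\,p(j-k)/\beta + p(k-j)}\,dx
\;=\; C\int_{\Omega_t}|\nabla^{k}f|^{p}\,d^{\,p(k-j)(\beta-1)/\beta}\,dx,
\]
and the residual exponent $p(k-j)(\beta-1)/\beta$ is strictly negative whenever $\beta<1$ and $j<k$. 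Since $d$ is bounded above but not below, this weighted integral cannot be controlled by the unweighted $\dot W^{k,p}$ norm, and your telescoping does not close. Your own remark that ``it is essential that the gain per derivative is the $\beta$-adjusted exponent $p/\beta$'' identifies precisely the missing ingredient---but \eqref{eq:hardy_lipschitz} does not supply it. The argument is correct only when $\beta=1$.

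The paper's proof (Corollary~\ref{cor:highso_lip}) does not apply Theorem~\ref{thmintro:notime} as a black box. Instead it returns to the proof of Theorem~\ref{thm:high} and works with the intermediate quantities $\I$ and $\II$ from \eqref{eq:alternativeforhardy1}--\eqref{eq:alternativeforhardy2}. The point is that the $1/\beta$ in the final weight of Theorem~\ref{thmintro:notime} enters only when the weighted $L^p$ bound for the auxiliary operator $B'$ (Proposition~\ref{prop:bdry_bogovski}) is invoked. The term $\I$, coming from the $\chi_j f$ part of the decomposition, carries weight $d^{(|\gamma|-k)p}$ \emph{without} the $1/\beta$, and there Hardy does telescope to weight zero. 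For $\II$ one applies Hardy first and only afterwards the $B'$ estimate, so the $\beta$-loss is absorbed by the gain-of-one-derivative built into $B'$ rather than accumulating across $k$ steps.
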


The best estimates that follow from our construction
in the general case are recorded 
in Theorem \ref{thm:high}.
The general form comes with weights quantifying scaling deficits 
relative to both spatial and temporal irregularity of the boundary.
The reason why estimates in Lipschitz domains are simpler is two-fold.
On one hand, 
one can set $\beta = 1$ in the estimate
but on the other hand one may also simplify the expression by means of Hardy's inequality. 
Concerning the space-time setting, 
the difference between Lipschitz in space and H\"older in space domains is even more drastic. 
For domains that are uniformly Lipschitz in space, 
in the sense that they are uniformly star-shaped with respect to a fixed set of balls,
there exist constant in time Bogovskij operators, 
but
operators of such characteristics are highly unlikely to exist
in a space-time domain that allows for an exterior cusp that moves as time passes. 

Finally we give proposition 
that demonstrates that our construction is flexible enough 
for unweighted estimates useful for applications.
The proposition below is proved after Proposition \ref{prop:new}.
More results of similar flavor can be deduced from Theorem \ref{thm:negat}.  
\begin{proposition}
\label{prop_last_intro}
Assume that $\Omega\in C^{\alpha,\beta,\theta}$.
Fix $1< p,s < \infty$, a time $t$, and assume $f$ is a limit of $C_{smz}^{\infty}(\Omega)$ functions in $L^{1}(\Omega)$ norm 
with 
$\partial_t f \in W_0^{-1,p}(\Omega_t)$ and 
$f\in L^{s}(\Omega_t)$.
Then
$\partial_t B f \in L^{q}(\Omega_t)$
for all $q > 0$ such that 
\[
\frac{(1+ \varepsilon)(1-\beta)}{\theta} \le   \frac{1}{q} - \frac{1}{p} \quad \text{and} \quad \frac{(1+ \varepsilon)(1/\alpha -\beta)}{\theta} \le   \frac{1}{q} - \frac{1}{s}
\]
for some $\varepsilon > 0$.
\end{proposition}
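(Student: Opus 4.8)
The plan is to combine the weighted a priori estimate for $\partial_t Bf$ provided by Proposition~\ref{prop:new} (whose negative-order input is handled via Theorem~\ref{thm:negat}) with H\"older's inequality and the thinness hypothesis \eqref{eq:defthetaintro}, the latter being used to trade distance weights against a loss in the integrability exponent.

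First, fixing $t$, I would start from the decomposition
\[
  \partial_t Bf(t,\cdot) = B(\partial_t f)(t,\cdot) + R_t f ,
\]
where $R_t$ collects the contributions coming from the time dependence of the geometry of $\Omega$. Since the graphs defining $\Omega$ are only $\alpha$-H\"older in time, $R_t$ is not a classical derivative of $B$: it arises from the time increments of the transport between a reference construction and the slice $\Omega_t$, so its size is governed by the temporal H\"older modulus and carries the exponent $1/\alpha$ in place of $1$. Combining this with the $\beta$-H\"older regularity in space, and for the term $B(\partial_t f)$ with the negative-order mapping property of $B$ from Theorem~\ref{thm:negat} (recall $\partial_t f(t,\cdot)\in W^{-1,p}(\Omega_t)$), one is led — this being the content of Proposition~\ref{prop:new} applied with no spatial derivative falling on $f$, since we only want $\partial_t Bf$ in $L^q$ and not its gradient — to weighted bounds of the form
\begin{align*}
  \no{B(\partial_t f)(t,\cdot)}_{L^{p}(\Omega_t,\,\dist(\cdot,\partial\Omega_t)^{b})} &\le C\no{\partial_t f(t,\cdot)}_{W^{-1,p}(\Omega_t)}, && b\sim p(1-\beta),\\
  \no{R_t f}_{L^{s}(\Omega_t,\,\dist(\cdot,\partial\Omega_t)^{a})} &\le C\no{f(t,\cdot)}_{L^{s}(\Omega_t)}, && a\sim s\bigl(\tfrac1\alpha-\beta\bigr),
\end{align*}
the exponent $1/\alpha$ reflecting the H\"older-in-time modulus and the $-\beta$ the H\"older-in-space regularity.

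It then remains to remove the weights. For a nonnegative power $c$ and exponents $q<r$, H\"older's inequality gives
\[
  \int_{\Omega_t}|g|^{q}\,dx \le \Bigl(\int_{\Omega_t}|g|^{r}\dist(\cdot,\partial\Omega_t)^{c}\,dx\Bigr)^{q/r}\Bigl(\int_{\Omega_t}\dist(\cdot,\partial\Omega_t)^{-\frac{cq}{r-q}}\,dx\Bigr)^{\frac{r-q}{r}},
\]
and by \eqref{eq:defthetaintro} the last factor is finite whenever $\tfrac{cq}{r-q}<\theta$, equivalently $c<\theta r\bigl(\tfrac1q-\tfrac1r\bigr)$. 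Applying this once with $(r,c)=(p,b)$ and once with $(r,c)=(s,a)$ and inserting the values of $b$ and $a$, the two integrability thresholds become precisely $\tfrac{1-\beta}{\theta}<\tfrac1q-\tfrac1p$ and $\tfrac{1/\alpha-\beta}{\theta}<\tfrac1q-\tfrac1s$; the factor $(1+\varepsilon)$ in the statement is exactly the margin needed to make these strict and hence to guarantee integrability of the negative distance power against the thin-boundary bound. Summing the two contributions yields $\partial_t Bf(t,\cdot)\in L^{q}(\Omega_t)$.

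The genuinely delicate point is the construction and control of the geometric remainder $R_t$: because $\Omega$ is merely H\"older in time, the transport map cannot be differentiated classically, so one must work with time difference quotients, extract the gain $\dist^{-1/\alpha}$ rather than $\dist^{-1}$ from the temporal H\"older modulus, and simultaneously keep the spatial $\beta$-H\"older structure under control so that the two deficits combine into the single weight $\dist^{1/\alpha-\beta}$. This is carried out in Proposition~\ref{prop:new}; granting it and Theorem~\ref{thm:negat}, the present proposition follows by the short H\"older argument above.
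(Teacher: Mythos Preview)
Your proposal is correct and follows essentially the same route as the paper. The paper's argument, stated right after Proposition~\ref{prop:new}, applies that proposition with $\lambda=1$ to place $\partial_t Bf$ in the sum space $d^{(1-\beta)p}L^{p}(\Omega_t)+d^{(1/\alpha-\beta)s}L^{s}(\Omega_t)$ and then invokes Proposition~\ref{prop:tounweighted} (the H\"older/thinness argument you reproduce) to pass to $L^{q}$. One small point of notation: the first piece in your decomposition is not literally $B(\partial_t f)$ but the component $(B_{0}^{*})^{*}\partial_t f$ from \eqref{negatproof:decomposition}, since $B$ does not commute with $\partial_t$; you evidently have this in mind when you defer to Proposition~\ref{prop:new}, so the argument stands.
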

In particular, 
if $\partial_t f\in W_0^{-1,2}(\Omega_t)$, $f\in L^s(\Omega_t)$, 
$\theta=1$  
and 
\[
q<\min \left \lbrace \frac{\alpha s}{s + \alpha - \alpha s \beta},\frac{2}{3-2\beta} \right \rbrace,
\]
then $\partial_t B f\in L^{q}(\Omega_t)$.

Generally it seems noteworthy that the regularity loss is due to the fact that the zero boundary values are attained in a weaker way in non-Lipschitz domains. Or in case of Lebesgue spaces (or negative order Sobolev spaces), more severe singularities are admissible, which are revealed by the integration of the Bogovskij operator.

\subsection{Application to the Navier Stokes equation on non-cylindrical domains}
\label{sec:introfluid}
An incompressible fluid is described by a velocity field $ v: \Omega \to \rn $
and a pressure $ \pi : \Omega \to \R  $
that are subject to the Navier--Stokes equations 
\begin{align} 
  \begin{aligned}
    \label{eq:NS}
    \rho \dfrac{D}{Dt}v(t,x)-\mu\Delta v(t,x)+\nabla \pi(t,x)
    &=\rho g(t,x) +\dive F(t,x) \\
    \dive v(t,x) &=0.
    \end{aligned}
\end{align}
Here 
$\rho$ is the density of the fluid,
\[
  \dfrac{D}{Dt}v(t,x) =\partial_t v(t,x)+[\nabla v(t,x)]v(t,x) 
\]
is the material derivative, $\mu>0$ is the viscosity of the fluid, 
and $g: \Omega \to \rn$ and $F:\Omega \to  \rn$ 
describe external forces such as gravity. 
We assume $g,F \in L_{loc}^{1}(\Omega)$.
Denote by
\[
C_{sol,0}^{\infty}(\Omega;\rn) = \{ \varphi \in C_0^{\infty}(\Omega;\rn): \dive \varphi = 0 \}
\]
the class of solenoidal and compactly supported test functions.

\begin{definition}
\label{def:fluid_intro}
A very weak solution in $\Omega \in C^{0,0,\theta}$ is a pair $(v,\pi)$
where
\begin{itemize}
  \item the velocity field $v \in L_{loc}^{2}(\Omega;\rn)$ satisfies 
  \begin{align*}
    -\la{ v,\partial_t \psi} - \la{v \otimes v ,\nabla \psi}- \mu \la{ v , \Delta \psi }
    -  \la{g ,\psi} + \la{F ,\nabla \psi} &= 0 \\
    \la{v, \nabla \psi} &= 0
  \end{align*}
  for all test functions $\psi \in C_{sol,0}^{\infty}(\Omega;\rn)$;
  \item the pressure $\pi $ is an element of $ (C^{\infty}_0(\Omega))^{*}$ and $-\nabla \pi$ equals the distribution mapping a general, not necessarily solenoidal, test function $\psi$ to the left hand side of the first equation above.
\end{itemize}
\end{definition}
The existence of weak solutions is known under very general conditions. 
Up to the authors' knowledge, the most general results can be found in \cite{neustupa09}.
Remarkably, the pressure is kept in a rather implicit form in \cite{neustupa09} 
as well as in many other existence results. 
Rather often it does not even appear in the definition of weak solutions, even in the case when the domain is assumed 
to have a Lipschitz regular boundary for all times \cite{NeuPan09,LenRuz14,MR1948464}.
Starting from such a fluid velocity field without explicit pressure,
the right inverse of the divergence introduced in the present paper 
can be used as an efficient tool to construct missing pressures 
with appropriate regularity properties.

We state a sample result in general H\"older domains
of the space-time. 
\begin{theorem}
  \label{thmint:fluid3}
Let $\Omega$ be a $C^{\alpha,\beta,\theta}$ domain with $\alpha, \beta \in (0,1]$.
Let $2 < p, q < \infty$ and $1< r,s < \infty$,
and assume that $v$, $F$ and $g$ are a velocity field and two force terms as in Definition \ref{def:fluid_intro}
such that 
\[
v \in L^{q}L^{p}(\Omega), \quad g, |F| \in L^{s}L^{r}(\Omega).
\]
Then there exists a pressure decomposition 
\[
\pi = \pi_{time}^1+\pi_{time}^2 + \pi_{conv} + \pi_{visc} + \pi_{ext,1} + \pi_{ext,2}
\] 
such that $(v,\pi)$ is a very weak solution in $\Omega$
and 
\begin{align*}
  &\pi_{time}^1 \in W^{-1,q}W^{1,p_1}(\Omega), \quad
  \pi_{conv} \in L^{q}L^{p_1/2}(\Omega), \quad
  \pi_{ext,2} \in L^{s}L^{r_1}(\Omega),
  \\
  &\pi_{time}^2 \in L^{q}L^{p_2}(\Omega), \quad 
  \pi_{visc} \in L^{q}W_0^{-1,p_1,-\beta,0,\infty,0}(\Omega),\quad
  \pi_{ext,1} \in L^{s}W^{1,r_1}(\Omega),
\end{align*}
whenever $1< p_1, p_2, r_1 < \infty $ satisfy 
\begin{align*}
\frac{(1+ \varepsilon) (1- \beta) }{\theta}  &\le \frac{1}{p_1} - \frac{1}{p}, \\
\frac{1+ \varepsilon }{\theta} \left(\frac{1}{\alpha} -  \beta \right) &\le \frac{1}{p_2} - \frac{1}{p} ,\\
\frac{ (1+ \varepsilon) (1- \beta) }{\theta} &\le \frac{1}{r_1} - \frac{1}{r}
\end{align*}
for some $\varepsilon > 0$.
\end{theorem}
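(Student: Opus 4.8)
The plan is to take the given very weak solution $(v,\pi)$—whose pressure $\pi$ is only known as an element of $C^\infty(\Omega)^*$ through the momentum equation—and test the distribution $-\nabla\pi$ against $Bf$ for arbitrary $f\in C^\infty_{smz}(\Omega)$, using $\dive Bf=f$ and $Bf\in C_0^\infty(\Omega)$ to realize $\pi$ slicewise as an actual function (modulo the slice mean, which is the usual normalization that makes pressure unique). Concretely, for each term on the right-hand side of the first equation in Definition \ref{def:fluid_intro} I pair it with $Bf$, integrate by parts in the appropriate variable, and then invoke the mapping properties of $B$ from the theorems in the introduction to read off which Sobolev (or negative Sobolev, or weighted) space the corresponding pressure component lands in. This is the standard De Rham / Bogovskij duality scheme; the novelty is that each of the six force contributions needs a \emph{different} estimate on $B$, and two of them involve the time derivative and hence the time-regularity theorems (the first displayed theorem and Proposition \ref{prop_last_intro}).

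The key steps, in order: (i) \emph{Time term.} The pairing $-\la{v,\partial_t Bf}$ must be split. Writing $\partial_t\la{v,Bf}=\la{\partial_t v,Bf}+\la{v,\partial_t Bf}$ is illegal since $\partial_t v$ need not be a function, so instead I move $\partial_t$ onto $v$ only in the distributional sense that is available: one piece, $\pi^1_{time}$, is defined by duality as an element of $W^{-1,q}W^{1,p_1}$ directly from $v\in L^qL^p$ together with the first displayed theorem (with $\kappa=0$, $k=0$, giving $Bf(t,\cdot)\in W^{1,p_1}$ controlled by $f$ in $L^{p_1}$ after the Hölder-loss weight is absorbed via the condition $(1+\varepsilon)(1-\beta)/\theta\le 1/p_1-1/p$), and the other piece, $\pi^2_{time}$, uses Proposition \ref{prop_last_intro} to put $\partial_t Bf$ itself into $L^{p_2}$ whenever $(1+\varepsilon)(1/\alpha-\beta)/\theta\le 1/p_2-1/p$, yielding $\pi^2_{time}\in L^qL^{p_2}$. (ii) \emph{Convection term.} Pairing $-\la{v\otimes v,\nabla Bf}$ and using $v\otimes v\in L^{q}L^{p/2}$ with $\nabla Bf(t,\cdot)\in L^{(p/2)'{}\text{-dual}}$—again via the first-order bound Theorem \ref{thmintro:notime}-type estimate—gives $\pi_{conv}\in L^qL^{p_1/2}$. (iii) \emph{Viscous term.} $-\mu\la{v,\Delta Bf}$ is handled by noting $\Delta Bf$ is one derivative worse than $\nabla Bf$; the weighted first-order estimate in general $C^{0,\beta,\theta}$ domains puts this into the weighted negative space $W_0^{-1,p_1,-\beta,0,\infty,0}$, explaining the appearance of that exact space in the statement. (iv) \emph{External forces.} $-\la{g,Bf}+\la{F,\nabla Bf}$: for $\pi_{ext,1}$ pair $g$ with $Bf$ and use that $B$ gains a full derivative, landing in $W^{1,r_1}$ after the $(1-\beta)$-loss is paid by $1/r_1-1/r$; for $\pi_{ext,2}$ pair $F$ with $\nabla Bf$ directly in $L^{r_1}$. (v) Finally, verify that the sum $\pi^1_{time}+\pi^2_{time}+\pi_{conv}+\pi_{visc}+\pi_{ext,1}+\pi_{ext,2}$, as a functional on $C^\infty(\Omega)$, agrees with $-\nabla\pi$ from the definition—this is immediate because each piece was \emph{defined} by the corresponding term in the weak formulation, and $Bf$ ranges over enough test fields (all of $C_0^\infty$ arising as $Bf$, whose divergences exhaust $C^\infty_{smz}$) to pin down the pressure up to the slice mean.

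The main obstacle I expect is step (i): correctly defining $\pi^1_{time}$ as a \emph{bona fide} element of $W^{-1,q}W^{1,p_1}(\Omega)$ rather than just a slicewise distribution, i.e. controlling the joint time–space regularity of the functional $f\mapsto-\la{v,\partial_t Bf}$. This requires that $B$ not merely be bounded slicewise but that $\partial_t B$ be bounded \emph{with the weight} $\dist(x,\partial\Omega_t)^{-p/\alpha}$ on the right, which is exactly the content of the first displayed theorem with $\kappa=1$, and then one must check that integrating this weighted bound in $t$ against $v\in L^qL^p$ is compatible with a clean $W^{-1,q}$ statement; this is where the Hölder-in-time exponent $\alpha$ enters all the constraints and where a careful bookkeeping of which norm the weight gets absorbed into (via the weighted-to-unweighted passage using the $\theta$-thinness of the boundary, \eqref{eq:defthetaintro}, exactly as in Proposition \ref{prop_last_intro}) is unavoidable. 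Everything else is a routine, if lengthy, application of the already-established mapping properties of $B$ together with Hölder's inequality in the $t$ variable.
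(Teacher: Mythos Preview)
Your overall scheme is the paper's: set $\pi_i=-\Lambda_i\circ B$ and read regularity off the mapping properties of $B$. The paper packages this as Lemma~\ref{lem:distribution}; Theorem~\ref{thmint:fluid3} is then obtained by first using Proposition~\ref{prop:tounweighted} to pass from the unweighted assumptions $v\in L^qL^p$, $g,|F|\in L^sL^r$ to the weighted ones needed in Theorem~\ref{thmint:fluid2} (this is where the constraints on $p_1,p_2,r_1$ enter), and then invoking items (2), (3), (4) of that lemma. Your steps (ii)--(iv) match this.

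The one point that needs sharpening is step~(i). The split $\pi_{time}=\pi_{time}^1+\pi_{time}^2$ does \emph{not} come from bounding $\partial_t Bf$ as a whole via Proposition~\ref{prop_last_intro}; that proposition only gives a single estimate and cannot by itself produce two pressure components. The decomposition is at the operator level: applying the Leibniz rule to the time-dependent Whitney cut-offs $\chi_j$ in the construction of $B$ yields $\partial_t B=\widetilde B_0\,\partial_t+\widetilde B_1$, which is precisely the content of Proposition~\ref{prop:new} and item~(4) of Lemma~\ref{lem:distribution}. Then $\la{\pi_{time}^1,f}=\la{v,\widetilde B_0\partial_t f}$ carries $\partial_t$ on $f$ and lands in $W^{-1,q}W^{1,p_1}$ via the frozen-time first-order bound (absorbing the $d^{1-\beta}$ weight through the condition on $p_1$), while $\la{\pi_{time}^2,f}=\la{v,\widetilde B_1 f}$ involves no $\partial_t f$ and lands in $L^qL^{p_2}$ once the weight $d^{\beta-1/\alpha}$ built into $\widetilde B_1$ is traded for integrability via Proposition~\ref{prop:tounweighted} under the condition on $p_2$. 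With the split stated this way your argument goes through and coincides with the paper's.
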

As the boundary of the domain is merely $\beta$-H\"older,
the lack of boundary regularity has an effect on the pressure, 
even when the fluid velocity is a weak solution. 
The estimates above are for very weak solutions. 
In the case of weak Leray solutions, 
one can prove better estimates.
All the estimates above suffer from the lack of regularity in space,
but only the pressure term related to the time derivative of the velocity
produces a contribution stemming from the irregularity of the evolution of the domain. 
On the other hand, if the domain happens to be Lipschitz in space,
we can simplify a bit further.
\begin{theorem}
\label{thmint:fluid4}
Assume the domain in Theorem \ref{thmint:fluid3} is $C^{\alpha,1,1}$,
that is, Lipschitz in space.
Then for 
\[
v \in L^{q}L^{p}(\Omega), \quad g, |F| \in L^{s}L^{r}(\Omega),
\]
we get 
\begin{align*}
  &\pi_{time}^1 \in W^{-1,q}W^{1,p}(\Omega), \quad
  \pi_{conv} \in L^{q}L^{p/2}(\Omega), \quad
  \pi_{ext,2} \in L^{s}L^{r}(\Omega),
  \\
  &\pi_{time}^2 \in L^{q}L^{p_2}(\Omega), \quad 
  \pi_{visc} \in L^{q}W^{-1,p}(\Omega),\quad
  \pi_{ext,1} \in L^{s}W^{1,r}(\Omega),
\end{align*}
whenever $1< p_2 < \infty$ satisfies
\begin{align*} 
 \left( \frac{1}{\alpha} - 1 \right) < \frac{1}{p_2} - \frac{1}{p}  .
\end{align*}
\end{theorem}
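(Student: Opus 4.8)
The plan is to obtain Theorem~\ref{thmint:fluid4} as the special case $\beta = 1$, $\theta = 1$ of Theorem~\ref{thmint:fluid3}, these being the parameters describing a domain that is Lipschitz in space. Setting $\beta = \theta = 1$, the first and third scaling inequalities of Theorem~\ref{thmint:fluid3}, namely $\frac{(1+\varepsilon)(1-\beta)}{\theta} \le \frac1{p_1} - \frac1p$ and $\frac{(1+\varepsilon)(1-\beta)}{\theta} \le \frac1{r_1} - \frac1r$, degenerate to $0 \le \frac1{p_1} - \frac1p$ and $0 \le \frac1{r_1} - \frac1r$, and are therefore satisfied with the admissible choices $p_1 = p$ and $r_1 = r$. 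The middle inequality $\frac{1+\varepsilon}{\theta}\bigl(\frac1\alpha - \beta\bigr) \le \frac1{p_2} - \frac1p$ turns into $(1+\varepsilon)\bigl(\frac1\alpha - 1\bigr) \le \frac1{p_2} - \frac1p$, which can be arranged with some $\varepsilon > 0$ exactly when $\frac1\alpha - 1 < \frac1{p_2} - \frac1p$, the hypothesis stated in Theorem~\ref{thmint:fluid4}.

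With these choices, Theorem~\ref{thmint:fluid3} yields the decomposition $\pi = \pi_{time}^1 + \pi_{time}^2 + \pi_{conv} + \pi_{visc} + \pi_{ext,1} + \pi_{ext,2}$ verbatim, and five of the six summands land in the asserted spaces at once: $\pi_{time}^1 \in W^{-1,q}W^{1,p}(\Omega)$, $\pi_{conv} \in L^q L^{p/2}(\Omega)$, $\pi_{ext,2} \in L^s L^r(\Omega)$, $\pi_{time}^2 \in L^q L^{p_2}(\Omega)$ and $\pi_{ext,1} \in L^s W^{1,r}(\Omega)$. The term $\pi_{time}^2$ keeps its integrability deficit because that deficit is controlled by $\alpha$, not by $\beta$, so it cannot be removed by Lipschitz regularity in space alone. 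The one summand that still needs an argument is the viscous pressure, for which Theorem~\ref{thmint:fluid3} delivers only the weighted bound $\pi_{visc} \in L^q W_0^{-1,p_1,-\beta,0,\infty,0}(\Omega)$, which with $p_1 = p$ and $\beta = 1$ reads $\pi_{visc} \in L^q W_0^{-1,p,-1,0,\infty,0}(\Omega)$.

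The remaining, and essentially the only substantive, step is to see that $W_0^{-1,p,-1,0,\infty,0}(\Omega_t)$ is contained in the unweighted negative Sobolev space $W^{-1,p}(\Omega_t)$. The weighted negative norm measures a distribution by duality against first-order test functions that carry a factor which is a power of $\dist(\cdot,\partial\Omega_t)$, and since $\beta = 1$ this critical distance weight can be absorbed by means of the Hardy inequality $\no{u/\dist(\cdot,\partial\Omega_t)}_{L^{p'}(\Omega_t)} \le C \no{\nabla u}_{L^{p'}(\Omega_t)}$ for $u \in W_0^{1,p'}(\Omega_t)$; this is precisely \eqref{eq:hardy_lipschitz}, valid on each slice $\Omega_t$ by the hypothesis $\beta = 1$. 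Integrating the resulting slice-wise estimate in $t$ in the $L^q$ sense gives $\pi_{visc} \in L^q W^{-1,p}(\Omega)$, which completes the proof. I do not expect any genuine difficulty beyond matching the exponents carefully; the analytic substance is already contained in Theorem~\ref{thmint:fluid3} together with the validity of Hardy's inequality on spatially Lipschitz domains.
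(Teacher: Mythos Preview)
Your proposal is correct and follows essentially the same approach as the paper: specialize Theorem~\ref{thmint:fluid3} to $\beta=\theta=1$, take $p_1=p$, $r_1=r$, and then use Hardy's inequality \eqref{eq:hardy_lipschitz} on the Lipschitz slices $\Omega_t$ to pass from the weighted space $W_0^{-1,p,-1,0,\infty,0}(t,\Omega)$ to the unweighted $W^{-1,p}(\Omega_t)$ for the viscous pressure. The paper makes explicit one intermediate step you leave implicit, namely that the subscript-zero negative space embeds into the non-subscript one (since the former is defined by duality against the larger test class $C^{\infty}$ rather than $C_0^{\infty}$), after which Hardy yields $\no{g}_{W^{1,p',1,0,\infty,0}(t,\Omega)}\sim\no{g}_{W^{1,p'}(\Omega_t)}$ for $g\in C_0^{\infty}(\Omega_t)$ and hence the desired identification of duals.
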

This estimate is included for comparison to show how the weighted estimates are connected to unweighted spaces.

We also give another formulation for the construction of pressures in terms of weighted Sobolev spaces. 
What follows, 
is (probably) a more precise result using a less transparent formulation with weights.
See Section \ref{sec:function} for the notation.
\begin{theorem}
  \label{thmint:fluid2}
Let $\Omega$ be a $C^{\alpha,\beta,\theta}$ domain with $\alpha, \beta \in (0,1]$.
Let $2 < p, q < \infty$ and $1< r,s < \infty$,
and assume that $v$, $F$ and $g$ are a velocity field and two force terms as in Definition \ref{def:fluid_intro}
such that 
\begin{align*}
  &\int \left(\int |v(t,x)|^{p} \dist (x, \partial \Omega_t) ^{(\beta - 1)p} \, dx \right)^{q/p} \, dt < \infty \\
  & \int \left( \int (|F(t,x)|^{r}  + |g(t,x)|^{r})\dist (x, \partial \Omega_t) ^{(\beta - 1)r} \, dx \right)^{s/r}  dt < \infty.
\end{align*}
Then there exists a pressure decomposition 
\[
\pi = \pi_{time} + \pi_{conv} + \pi_{visc} + \pi_{ext,1} + \pi_{ext,2}
\] 
such that $(v,\pi)$ is a very weak solution in $\Omega$
and 
\begin{multline*}
  \pi_{time} \in L^{q}W^{1,p,\infty,-1,-\alpha \beta,0}, \quad 
  \pi_{conv} \in L^{q} L^{p/2}(\Omega), \quad 
  \pi_{ext,2} \in L^{s}L^{r}(\Omega), 
  \\
  \pi_{visc} \in L^{q}W_0^{-1,p,-\beta,0,\infty,0}(\Omega),\quad
  \pi_{ext,1} \in L^{s}W^{1,r}(\Omega).
\end{multline*}
\end{theorem}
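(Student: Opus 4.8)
The plan is to obtain the pressure by decomposing the functional on the right hand side of the first equation in Definition \ref{def:fluid_intro} term by term and then applying the Bogovskij operator $B$ (more precisely, its adjoint) to each piece separately. Concretely, for a non-solenoidal test function $\psi$ one writes
\[
\la{-\nabla \pi, \psi} = \la{v, \partial_t \psi} + \la{v\otimes v, \nabla \psi} + \mu \la{v, \Delta \psi} + \la{g, \psi} - \la{F, \nabla\psi},
\]
and one wants to exhibit each term as $\la{\Pi_j, \dive \psi}$ for a suitable distribution $\Pi_j$, using that $\psi - B(\dive \psi)$ is solenoidal (up to slice mean zero corrections) and compactly supported, so that the velocity equation annihilates it. Thus each pressure component $\pi_j$ is, up to sign, the distribution $\psi \mapsto (\text{term}_j \text{ tested against } B^\ast \text{ of things})$; in functional terms $\pi_{conv}$ pairs $v\otimes v$ against $\nabla B(\cdot)$, $\pi_{visc}$ pairs $v$ against $\Delta B(\cdot)$, the external terms pair $g, F$ against $B(\cdot)$ and $\nabla B(\cdot)$, and $\pi_{time}$ pairs $v$ against $\partial_t B(\cdot)$.

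Next I would read off the function space membership of each $\pi_j$ from the mapping properties of $B$ established earlier, used in their dual form. For $\pi_{conv}$: since $v \in L^qL^p$ with the stated weight, $v\otimes v \in L^{q/2}L^{p/2}$ with the squared weight, and because $\nabla B$ maps the relevant weighted negative-order space into $L^{q}L^{p/2}$ (duality applied to Theorem \ref{thmintro:notime}-type bounds, which is why the weight $\dist^{(\beta-1)p}$ appears in the hypothesis), one gets $\pi_{conv} \in L^qL^{p/2}(\Omega)$ with no boundary weight left, matching the statement. For $\pi_{ext,1}, \pi_{ext,2}$: the term $\la{g,\psi} - \la{F, \nabla\psi}$ is handled by pairing against $B$ and $\nabla B$ respectively; $B$ gains one spatial derivative so $\pi_{ext,1} \in L^sW^{1,r}$, while $\nabla B$ is order zero so $\pi_{ext,2} \in L^sL^r$, again after absorbing the weight hypothesis on $F, g$. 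For $\pi_{visc}$: pairing $v$ against $\Delta B(\psi)$ costs two derivatives, one of which is recovered by $B$, leaving a net loss of one derivative, hence $\pi_{visc}$ lands in a negative-order space; because our $B$ only attains zero boundary values in the weakened, weighted sense when the domain is merely H\"older, $\pi_{visc}$ sits in the weighted zero-trace space $L^qW_0^{-1,p,-\beta,0,\infty,0}(\Omega)$ exactly as claimed. The arithmetic relating the exponents $p_1, p_2, r_1$ to $p, r$ is forced by the integrability-loss condition $\tfrac{(1+\varepsilon)(1-\beta)}{\theta} \le \tfrac1q - \tfrac1p$ in Theorems \ref{thmintro:notime} and its time-derivative analogue, so no new inequality needs to be invented.

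The term $\pi_{time}$ is the genuinely new and hardest part. Here $\psi$ carries a time derivative, and one cannot simply move $\partial_t$ onto $v$ since $v$ need not be differentiable in time; instead one writes $\la{v, \partial_t\psi} = \la{v, \partial_t \psi} = -\la{v, \partial_t B(\dive\psi)} + \la{v, \partial_t(\psi - B\dive\psi)}$ and notes that $\psi - B\dive\psi$ is solenoidal but its time derivative need not be, so one must be careful: the clean statement is that $\pi_{time}$ is the distribution $\Phi \mapsto -\la{v, \partial_t B \Phi}$ acting on scalar test functions $\Phi$ of slice mean zero, and then Proposition \ref{prop_last_intro} (or rather the weighted mapping property of $\partial_t B$ behind it) tells us how much regularity $\partial_t B\Phi$ has. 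Dualizing, $\pi_{time}$ lands in the weighted space $L^qW^{1,p,\infty,-1,-\alpha\beta,1-\beta}$: the $W^{1,p}$ reflects that $B$ gains a spatial derivative, the negative time-order entry reflects that $\partial_t B$ is comparatively rough in time, and the weight exponents $-\alpha\beta$ and $1-\beta$ encode exactly the distance-weight deficits coming from temporal ($\alpha$) and spatial ($\beta$) H\"older irregularity of the boundary, as quantified in Theorem \ref{thm:high}. The main obstacle, and the reason the weight $\dist^{(\beta-1)p}$ must be imposed on $v$ from the outset, is that testing $v$ against $\partial_t B\Phi$ requires $\partial_t B\Phi$ to lie in the dual of the weighted velocity space, and controlling $\partial_t B$ in that dual norm is precisely the space-time estimate that the bulk of the paper is devoted to; once that estimate is in hand, the construction of $\pi_{time}$ and verification that $(v,\pi)$ with $\pi = \sum_j \pi_j$ is a very weak solution is a matter of collecting the pieces and checking that the defining identity of Definition \ref{def:fluid_intro} holds for every non-solenoidal $\psi$, which it does by construction since the solenoidal part was annihilated by the velocity equation.
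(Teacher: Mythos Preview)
Your approach is essentially the paper's: define each pressure component as $\pi_i = -\Lambda_i \circ B$ and read off the function-space membership from the dual mapping properties of $B$, which the paper has packaged once and for all in Lemma \ref{lem:distribution} and then simply applies item by item (items (2) and (3) with specific index vectors $\aleph$) to obtain the five claimed memberships. One slip worth flagging: the exponents $p_1, p_2, r_1$ and the integrability-loss condition you invoke belong to Theorem \ref{thmint:fluid3}, not to the present Theorem \ref{thmint:fluid2}; here the weighted hypotheses on $v$, $F$, $g$ are precisely what make the estimates close with no loss of integrability, so the conclusions sit directly in the weighted $W^{\aleph}$ scale and no passage through Proposition \ref{prop:tounweighted} is needed.
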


We refer to Theorem \ref{thm:fluid1} 
and Subsection \ref{sec:fluidthms} for more alternative statements.
The proofs are given at the end of Subsection \ref{sec:fluidthms}.
The pressure terms are constructed by precomposing the distributions defined by
the fluid velocity with the Bogovskij type inverse of the divergence.
Most estimates can be written either in a sharp form using weighted Sobolev norms or in 
a more transparent form with different integrability indices but the weighted norm only on one side.
Hence there is a variety of sets of pressure estimates that can be derived from Theorem \ref{thm:fluid1}.
The ones above are an attempt to single out two of the simplest non-trivial cases,
but 
the exact information about the very weak solution at hand ultimately decides what is the estimate best possible.
In particular, 
our estimates on the time derivative part can be improved considerably 
if we assume either the domain to be more regular in time or the velocity to possess higher integrability
near the boundary.


Finally, we conclude the introductory discussion by listing 
a few additional instances,
where it can be expected that the operator $B$ find further applications.

\subsection{Inhomogeneous boundary data for incompressible fluids}
The Navier--Stokes equation has been studied extensively in domains varying in time,
and we point out~\cite{fujita1970existence,bock1977navier,taylor2000incompressible} and the references therein for more background. 
In these references, 
either strong solutions are considered or 
the pressure is not constructed explicitly. 
Generally, the domains considered there are at least Lipschitz in space. 
Even in cylindrical domains and for the Stokes operator, 
pressure estimates are known to be a delicate issue 
if no strong solution is expected to exist, 
which might be the case for example due to a rough source term on the right hand side. 
See for instance the seminal paper~\cite{KocSol02}.

The results in the present paper give a construction of the pressure 
for weak Leray solutions or even very weak solutions.
A related problem is the case of inhomogeneous boundary conditions on non-cylindrical domains.
In order to obtain a priori estimates there,
a solenoidal extension operator is needed.
We consider the Bogovskij type operator as an intermediate step towards the extension operator,
and we plan to return to this question in a later work.

\subsection{Fluid-structure interactions}
Another set of applications arises in the setting of fluid-structure interactions,
where a fluid interacting with a possibly deforming or moving solid is modeled.
The fluid velocity and pressure are then defined in a variable-in-time fluid domain. 
It is noteworthy that 
even the existence of a distributional pressure is left undiscussed
in many works on weak solutions for fluid-structure interactions. 
This includes rigid body motions~\cite{galdi2006existence,GalSil09,Gal13,DEGLT} 
as well as elastic shells interacting with fluids~\cite{LenRuz14,canic2020moving,MuhSch20}. Similarly, estimates on the distributional time derivative 
in duals of solenoidal subspaces of Sobolev spaces are generally missing in the non-cylindrical set-up.
In cylindrical domains, they are obtained directly by the concept of weak solutions.
See~\cite{SchSro20} where an estimate for the distributional time derivative was shown for weak solutions of a fluid-structure interaction problem.
 
Not only does the Bogovskij operator here allow for constructing a distributional time derivative and a pressure for these references,
it also provides a non-trivial regularity gain 
relative to the regularity of the velocity and to the fluid domain,
which we consider as input data in the present paper. 
Moreover, 
it can be used for constructing approximating sequences of test-functions, 
which is usually a very delicate technical issue 
since the space of test functions depends on the motion of the solid in that setting. 
Up to the authors' knowledge, 
so far pressure reconstruction as well as the respective solenoidal approximation results 
are only available in smooth space-time domains~\cite{BenKamSch20}. 
 
\subsection{Density estimates for compressible fluids}
Higher integrability of the density, and consequently that of the pressure,
is nowadays commonly obtained via Bogovskij operators~\cite{FeiNovPel01,BreSch18}. 
The operator introduced here allows for a significant relaxation of the assumptions on the pressure
in \cite{BreSch18,BreSch20}, where interaction of elastic shells and compressible fluids is studied.

\subsection{Homogenization}
The prototype set-up for homogenization is a domain with an increasing number of holes 
with decreasing size
to represent particles that are breaking the flow. 
Ever since the seminal work of Tartar~\cite{Tat80} (appendix in the book) and Allaire~\cite{All90I,All90II}, 
so-called restriction operators that conserve solenoidality have been used in the analysis of homogenization problems. Operators of Bogovskij type may also be used in this setting~\cite{DieFeiLu17,LuSch18,HoeKowSch21}. 
Since it is reasonable that the particles move~\cite{CarrapatosoHillairet20}, 
the operator introduced here is likely to find applications for further progress in the theory of homogenization problems 
in fluids.

\subsection*{Acknowledgment} 
This work was supported by the Deutsche Forschungsgemeinschaft 
(DFG, German Research Foundation) under Germany's Excellence Strategy -- EXC-2047/1 -- 390685813 and CRC 1060. This work was also supported by the Primus research programme PRIMUS/19/SCI/01, the University Centre UNCE/SCI/023 of Charles University, the grant GJ19-11707Y of the Czech national grant agency (GA\v{C}R) as well as the ERC-CZ grant LL2105 CONTACT. S.\ S.\ wishes to thank the University of Vienna for their kind hospitality in winter 2020/21, where parts of the work were performed.

\section{Preliminaries}
\subsection{Notation}
We work in $ \R ^{1+n} $, 
the first coordinate being time and the other $ n $ being space.
Sometimes the space coordinates are split as $ x = (x',x_n) $ with $ x' \in \R^{n-1} $.
This notation is used without specific mention
and should always be understood as stated here.
The letter $ C $ is reserved for a quantity depending on data admissible for the constant 
in the statement of the proposition that is being proved.
We specify the dependency in the statements of the theorems but usually not in the proofs.
We also use the notation $a \lesssim b$ for $a \le Cb$ and $b \sim a$ if $a \lesssim b \lesssim a$.
The $ L^{p} $ norms for only space variables or both space and time variables are denoted by the same $ \no{\cdot}_p $.
We write $ \no{f(t,\cdot)}_p $ when the norm acts on space variables on a fixed slice 
and $ \no{f}_p $ when the integration is with respect to all variables.
We use the common notation $p' = p/(p-1)$ for the dual exponent of $1 < p < \infty$.
We denote the Lebesgue measures of all dimensions by $\ab{\cdot}$.
The notation $\la{f,g}$ means a duality pairing whose nature is always clear from the context,
and whenever it can be interpreted as an $L^{2}$ inner product
we will do so. Given a real number $h$, 
we define the finite difference operator $\Delta_h$  
as 
\[\Delta_h f(t,x) = f(t+h,x) - f(t,x),  \]
that is, with respect to the time variable.

\subsection{Domains}
\label{sec:domain}
The letter $ \Omega $ denotes a $ 1+n $ dimensional domain,
and $ \Omega_{t_0} $ refers to the $n$-dimensional domain 
obtained by collecting the points $x$ with $(t_0,x) \in \Omega$.
We identify the domains $ \{t\} \times S $ and $ S $ with $ S \subset \rn $
whenever this cannot cause confusion.

Most of the time,
we study domains whose boundary is locally a graph of a function.
The precise definition is as follows.

\begin{definition}[$C^{\alpha,\beta,\theta}$ domain]
  \label{def:domains}
  Let $\alpha, \beta ,\theta \in [0,1]$.
  Let $A$ be a natural number.
  A bounded, open and connected set $ \Omega \subset \R ^{1+n} $ is a
  $ C^{\alpha,\beta,\theta} $ domain with parameter $A$ if the boundary of $ \Omega $ can be covered by
  $A$ rectangles $ P_i = I_i \times R_i $ with $I_i \subset \R$ 
  such that $\partial \Omega \cap P_i $ is a graph of a function 
  that is $\alpha$-H\"older continuous in time and $ \beta $-H\"older continuous in space, 
  after a possible rotation and translation of the space coordinates. 
  In addition, 
  each time slice $\Omega_t$ is assumed to be connected.
  We use the same notation if $\alpha, \beta = 0 $
  when only uniform continuity is assumed instead of H\"older continuity as above.
  The parameter $\theta$ is as in \eqref{eq:defthetaintro}.
\end{definition}

For the readers' convenience, 
we briefly discuss the relation of $C^{\alpha,\beta,\theta}$ domains to other ubiquitous classes of domains
that we already mentioned in the introduction.
We restrict this comment to the time independent setting.
A bounded domain $ \Omega $ is $ s $-John with $ s \ge 1 $ 
if there is a center point $ z $ such that each $ x \in \Omega $
can be connected to $ z $ with a rectifiable curve $ \gamma $ 
parametrized by its arc length such that $ \gamma(0) = x $, $ \gamma(\ell(\gamma)) = z $
and 
\[
t^{s} \le C \dist(\gamma(t), \Omega^{c})
\]
for a constant $ C $ only depending on the domain and $z$.
The $1$-John domains are usually called John domains for brevity.

The $ s $-John domains and $ C^{1/s} $ domains allow the spiky exterior cusps of the same order.
Hence $ C^{\beta} $ domain need not be $ 1/(\beta + \epsilon) $ John.
On the other hand, a John domain can have a fractal boundary such as the von Koch snowflake, 
and hence even $ 1 $-John domain need not be a $ C^{0} $ domain.

For each Sobolev estimate we prove, 
there is an underpinning 
$ L^{p} $ estimate (see \eqref{eqthm:lpweight}) 
which encodes the geometry of the domain.
These $ L^{p} $ estimates are equivalent to certain weighted Poincar\'e inequalities,
compare to the argument leading to Lemma 4.1 in \cite{MR3558524}.
For those, sharpness has been studied extensively.
Our weighted $ L^{p} $ estimates \eqref{eqthm:lpweight}
specialized to the time independent case are slightly worse 
than what can be proved for domains with a single cusp,
such as 
\[
\{ (x,y) \in (-1,1): 0 < y < (1-x)^{1/\beta} \} .
\]
Our first order estimates with unweighted 
right hand side are as good as the ones proved in \cite{MR2606245},
but the case with unweighted left hand side does not allow for as good an estimate 
as what \cite{MR2606245} provides in the special case of a domain with a single cusp.
On the other hand, 
the graph type domain that fits in our framework
allows for estimates better than
what is know to be sharp for general $s$-John domains.
See \cite{MR1668136}, \cite{MR1768998} and also the book \cite{MR2777530}.
In particular, 
the sharp Poincar\'e inequality with $s = 1/\beta$ and $p=q$ in \cite{MR1768998}
is dual to sharp weighted $L^{p'}$ solvability of the divergence.
We also refer to \cite{MR2988724} for further discussion on sharpness of related results. 
The choice to treat $C^{\alpha,\beta,\theta}$ domains here 
was done in order to provide an operator for non-cylindrical space-time domains. 
This can be read from our construction 
where it seems necessary to have fixed local-in-time coordinates for the spatial domains.

\subsection{Function spaces}
\label{sec:function}
We define all the function spaces appearing in this paper 
as closures of test functions with respect to various norms.
The standard test function space is that of smooth and bounded functions,
but we also use spaces defined as closures of smooth functions with some additional properties,
such as compact support, mean zero, divergence freeness and so on.
If we impose an additional assumption on the space of test functions,
it will be denoted as a subscript such as
\begin{itemize}
  \item $0$ for $\supp \varphi$ compact,
  \item $\pi^{*}$ for $\la{ \dive \varphi(t,\cdot) ,1} = 0$ for all $t$,
  \item $smz$ for $\la{\varphi(t,\cdot),1} = 0$ for all $t$,
  \item $sol$ for $\dive \varphi(t,x)=0$ for all $(t,x)$.
\end{itemize}
This convention only applies to Sobolev spaces of non-negative order,
which we consider next.
The negative order spaces will be discussed afterwards.

Let $ k \ge 0$ be an integer and consider multi-indices $ \gamma \in \{0\} \times \N^{n} $.
Given a test function $ f $,
we define the homogeneous Sobolev norm as 
\begin{align*}
\no{f(t,\cdot)}_{\dot{W}^{k,p}} &=  \left(   \sum_{|\gamma| = k} \int |\partial^{\gamma} f(t,x)|^{p} \, dx \right)^{1/p},  
\end{align*}
We will also need a family of weighted norms,
which we define as follows. 
Consider the vectors 
\[
\aleph = (k,p,\beta,\kappa,\alpha,b), \quad \aleph' = (-k,p',-\beta,-\kappa,-\alpha,-b)
\] 
where $k$ is a non-negative integer telling the number of space derivatives,
$p \in (1,\infty)$ is the exponent of integrability,
$\beta \ge 0$ is a distortion parameter for space regularity,
$\kappa$ is the number of time derivatives,
$\alpha \ge 0$ is the distortion parameter for time regularity and 
$b \in \R $ is an additional weight parameter.
We define 
\begin{multline*}
  \no{f}_{W^{\aleph}(t,\Omega)} \\=  
 \left( \sum_{\lambda = 0}^{\kappa} \sum_{l=0}^{k} \sum_{|\gamma| = l} \int |\partial^{\gamma}\partial_t^{\lambda} f(t,x)|^{p}\dist(x,\partial \Omega_t)^{ p \left(\frac{l-k}{\beta} + \frac{\lambda- \kappa}{\alpha} - b \right)} \, dx \right)^{1/p}, 
\end{multline*}
and
\[
\no{f}_{L^{q}W^{\aleph}(\Omega)} = \left(\int \no{f}_{W^{\aleph}(t,\Omega)}^{q} \, dt \right)^{1/q}.
\]
The standard Sobolev spaces fall under the scale $L^{a}W^{\aleph}$,
and the basic examples are 
\begin{align*}
  \aleph = (k,p,\infty,0,\infty,0) & \Leftrightarrow W^{\aleph}(t,\Omega) = W^{k,p}(\Omega_t), \\
  \aleph = (0,p,\infty,0,\infty,0) & \Leftrightarrow L^{q}W^{\aleph}(\Omega) = L^{q}L^{p}(\Omega), \\
  \aleph = (0,p,\infty,0,\infty,\beta) & \Leftrightarrow W^{\aleph}(t,\Omega) =  d^{p\beta} L^{p}(\Omega_t), 
\end{align*}
and the duality formula 
\[
(d^{p\beta} L^{p}(\Omega_t))^{*} = d^{-p'\beta} L^{p'}(\Omega_t)
\]
is consistent with the notation or $\aleph'$.

For space variables only, 
we define the norms with negative smoothness directly as dual norms.
We set 
\begin{align*}
  \no{f(t,\cdot)}_{\dot{W}^{-k,p'}(\Omega_t)} &= \sup_{g \in C_0^{\infty}(\Omega_t), \ \|g\|_{\dot{W}^{k,p}} \le 1} \abs{ \int g(t,x)f(t,x) \, dx } ,\\
\no{f}_{W^{\aleph'}(t,\Omega)} &= \sup_{g \in C_0^{\infty}(\Omega_t), \ \|g\|_{W^{\aleph}(t,\Omega)} \le 1} \abs{ \int g(t,x)f(t,x) \, dx },
\end{align*}
when $\kappa = 0$.
When $\kappa  \ge 0$ and $k < 0$,
we set 
 \[
 \no{f}_{W^{k,p,\beta,\kappa,\alpha,b}(t,\Omega)}  = \left( \sum_{\lambda= 0}^{\kappa} \sup_{g } \abs{ \int g(t,x) \partial_{t}^{\lambda} f(t,x) \, dx }^{p} \right)^{1/p}
 \]
where the supremum is over all 
\[
g \in  C_0^{\infty}(\Omega_t), \quad  \|g\|_{W^{-k,p',-\beta,0,\infty,-b - (\kappa - \lambda)/\alpha }(t,\Omega)} \le 1
.\]
More generally, we define the dual norms of the remaining mixed spaces along 
\[
 \no{f}_{L^{q'}W^{\aleph'}(\Omega)}  = \sup_{g \in C_0^{\infty}(\Omega_t), \ \|g\|_{L^{q}W^{\aleph}(\Omega)} \le 1} \abs{ \int g(t,x)f(t,x) \, dx }
\]
when $1 < q < \infty$. 
Note that the values $\pm \infty$ appearing as third or fifth index give rise to the same norm.
This is consistent,
as we prefer to regard the dual space of an unweighted space as another unweighted space.
We will later abuse the notation and simply identify the values $\pm \infty$.
Note further that if a smoothness parameter, first index for spatial smoothness or fourth index for temporal smoothness, is zero, 
then the corresponding distortion parameter, 
third for space and fifth for time, becomes meaningless and can be set to $\infty$.
These notations are used in particular Theorems \ref{thmint:fluid2} and \ref{thmint:fluid3}, which
are derived very carefully from Lemma \ref{lem:distribution} at the end of the paper.
We also note that the spaces $L^{q}W^{\aleph}(\Omega)$ are not genuinely new but merely a notation 
to keep track of different weight parameters.
One could be very precise and define a Sobolev space that treats each 
mixed derivative separately with a particular weight,
but we restrict the generality to the setting as described above.

The norms of $\dot{W}_0^{-k,p'}(\Omega_t)$ and $W_0^{\aleph'}(t,\Omega)$
are defined by taking the supremum over the larger class of functions $g \in C^{\infty}(\Omega_t)$,
not necessarily compactly supported.
This diverges slightly from the notational convention for subscript indices of positive order spaces
but it attempts to capture the idea that a functional having zero boundary values 
acts on test functions without and vice versa.
Moreover, both topologies with negative Sobolev norms give the same closures 
for $C^{\infty}$ and $C^{\infty}_0$.
Hence there is no need to reserve the subscript zero to indicate 
the boundary values of the dense subspace.

Finally, 
we also use shortened notations such as 
\[
L^{q}L^{p}(\Omega), \quad L^{q}W^{k,p}(\Omega), \quad W^{\kappa,q}W^{k,p}(\Omega)
\]
and we write 
\[
\partial_t^{-1} f(t,x) = \int_{-\infty}^{t} f(s,x) \, ds 
\]
for antiderivatives in time whenever it simplifies the notation
and we can avoid using weighted $\aleph$ spaces.
The space $W^{-\kappa,q}W^{k,p}(\Omega)$ consist of functionals $f$
such that $ f \circ \partial_t^{\kappa} \in L^{q}W^{k,p}(\Omega)$.
A superscript star such as in $B^{*}$ converts an operator into its adjoint 
and a space such as in $X^{*}$ into its dual,
everything always written down a priori in terms of test functions.

\subsection{Inequalities}
For the reader's convenience,
we collect here some of the inequalities that we use repeatedly.
First, the classical Hardy's inequality
\begin{equation}
  \label{eq:hardy_classical}
  \left( \int_{0}^{\infty} \left(\frac{1}{x} \int_0^{x} |f(s)|\, ds \right)^{p} x^{b} \, dx \right)^{1/p}
  \le \frac{p}{p-b-1} \left( \int_{0}^{\infty} |f(x)|^{p} x^{b} \, dx \right)^{1/p}
\end{equation}
is
valid for all $ p > 1 $ and $ b \in (-\infty, p-1) $.
This can be understood as a weighted $ L^{p} $ bound for Hardy's operator.
The range of admissible $ b $ extends down to $ -\infty $ in contrast to the Hardy--Littlewood maximal function,
for which $ b \le -1 $ are not admissible anymore.
The difference will be important to us.
Second, we have the adjoint form of Hardy's inequality 
\begin{equation}
  \label{eq:hardy_classical2}
  \left( \int_{0}^{\infty} \left(\frac{1}{x} \int_{x}^{\infty} |f(s)|\, ds \right)^{p} x^{b} \, dx \right)^{1/p}
  \le \frac{p}{b+1} \left( \int_{0}^{\infty} |f(x)|^{p} x^{b} \, dx \right)^{1/p}
\end{equation}
which is valid for all $b \in (p-1, \infty)$.
This inequality is needed for treating the adjoint operator of the Bogovskij integral.

The next inequality is Hardy's inequality for Lipschitz domains.
Given a bounded Lipschitz domain $ O \subset \rn  $,
it holds 
\begin{equation}
  \label{eq:hardy_lipschitz}
\left( \int_O  |f(x)|^{p} \dist(x,\partial O)^{p(b-1)} \, dx  \right)^{1/p}
\le C \left( \int_O  |\nabla f(x)|^{p} \dist(x,\partial O)^{p b} \, dx  \right)^{1/p}
\end{equation}
for all test functions $ f \in C_0^{\infty}(O) $, all $ p > 1 $ and all $-\infty < b < (p-1)/p $.
The constant $ C $ only depends on the domain and the indices $ p $ and $ b $.
See for instance \cite{MR163054} or the metric space approaches in \cite{MR2506697}
and \cite{MR3168477}.
The Lipschitz condition is not necessary for Hardy's inequality to hold.
We say $O$ satisfies Hardy's inequality with negative powers if \eqref{eq:hardy_lipschitz}
holds for all $b \le 0$.
There are numerous interesting examples beyond the Lipschitz case with this property \cite{MR3168477}.
In particular, any bounded and simply connected planar domain satisfies \eqref{eq:hardy_lipschitz}
with all $b < (p-1)/p $.

We will also need Poincar\'e's inequality 
\begin{equation*}
\left( \int_Q |f(x)|^{p} \, dx  \right)^{1/p}
\le C \ell(Q)^{k} \left( \int_Q  |\nabla^{k} f(x)|^{p}  \, dx  \right)^{1/p}
\end{equation*}
which is valid for all cubes $ Q $, all $k \ge 1$
and all functions in $ C^{\infty}(Q) $
that either have compact support in $ Q $
or are $L^{2}(Q)$ orthogonal to all polynomials of degree at most $k-1$.

\section{Construction of a Bogovskij operator}
We start by defining the first reference operator.
Let $ b : \rn \to [0,1] $ be a smooth function supported in $ B(0,1) $
and having integral one.
We define the Bogovskij integral of a test function $ f $ as
\begin{equation*}
B_{B(0,1)} f (x) := \int_{\rn} f(y)(x-y) \int_{1}^{\infty} b\left( y + r(x-y)  \right) r^{n-1} \, d r d y.
\end{equation*} 
The adjoint operator of the Bogovskij integral is given through 
\begin{equation*}
B_{B(0,1)}^{*} f (x) :=  -\int_{\rn} b(y)(x-y) \int_{0}^{1} f(y+r(x-y)) \, d r d y.
\end{equation*} 

Given a generic cube $Q = Q(z,\delta)$ with center $ z $ and side length $2 \delta > 0 $,
let $ \tau (x) = (x-z)/\delta$.
We define 
\begin{equation}
  \label{eq:ref_bogo}
B_{Q}f(x) = \delta^{-1} B_{B(0,1)}(f \circ \tau)(\tau^{-1}( x )).
\end{equation}
We let $B_Q^{*}$ be the adjoint operator of $B_Q$.
It can also be written down explicitly using scaling and translation of the reference version above.
It is sometimes called the Poincar\'e integral.
We collect some fundamental properties of the Bogovskij
and Poincar\'e integrals to the following proposition.
A proof can be found for instance in \cite{MR2240056}.
More has been proved there,
but we only quote what we need.
\begin{proposition}
  \label{prop:reference}
  Let $ Q $ be a cube.
  Let $ \Omega \subset \rn $ be a bounded domain star-shaped with respect to $ Q $. 
  Then $ B_Q $ maps $ C_0^{\infty}(\Omega) $ into $C_0^{\infty}(\Omega;\rn)$ 
  and for all functions $ g \in C_0^{\infty}(\Omega) $ it holds
\[
\dive B_Q g = g - b \int g \, dx.
\]
In addition,
\[\no{B_Q}_{\dot{W}_0^{s,p}( \Omega )\to \dot{W}_0^{s+1,p}(\Omega;\rn)}
+ \no{B_Q^{*}}_{\dot{W}_0^{s,p}(\Omega;\rn)\to \dot{W}^{s+1,p}(\Omega)} < C\]
for all $ 1<p<\infty $ and $ s \ge 0$
with $C$ only depending on $s$, $p$, $n$ and $\diam (\Omega) / \diam (Q)$.
\end{proposition}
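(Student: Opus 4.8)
The plan is to reduce everything to the reference operators $B_{B(0,1)}$ and $B_{B(0,1)}^{*}$ via the affine change of variables built into \eqref{eq:ref_bogo}, and then to invoke the classical analysis of Bogovskij's integral. If $\tau(x)=(x-z)/\delta$ normalises $Q=Q(z,\delta)$ to the unit cube, then star-shapedness of $\Omega$ with respect to $Q$ transfers (since $\tau$ is affine) to star-shapedness of $\tau(\Omega)$ with respect to the unit cube, hence with respect to the ball $B(0,1)$ carrying $\supp b$. Under this conjugation the spatial gradient and the divergence each pick up one factor $\delta^{-1}$, which is exactly what the normalisation in \eqref{eq:ref_bogo} compensates; so it suffices to establish the three assertions — mapping $C_0^{\infty}$ into $C_0^{\infty}$, the divergence identity, and the two Sobolev bounds — for $B_{B(0,1)}$ (and the bounds also for $B_{B(0,1)}^{*}$) on a domain star-shaped with respect to $B(0,1)\supset\supp b$, after which $\delta$ and $z$ are tracked through the scaling to see that the constant depends only on $Q$, $s$, $p$, $n$.

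For the support property I would write $B_{B(0,1)}f(x)=\int f(y)\,N(x,y)\,dy$ with $N(x,y)=(x-y)\int_1^{\infty}b(y+r(x-y))r^{n-1}\,dr$ and note that $N(x,y)\neq 0$ forces some $r\geq 1$ with $y+r(x-y)\in\supp b$, i.e.\ $x$ lies on the segment joining $y\in\supp f$ to a point of $\supp b$. Star-shapedness with respect to a ball containing $\supp b$ places every such segment inside $\Omega$, and compactness of $\supp f\times\supp b\times[0,1]$ makes the union of these segments a compact subset of $\Omega$, so $\supp B_{B(0,1)}f\Subset\Omega$. Smoothness follows by differentiating under the integral: the diagonal singularity of $N$ is of order $|x-y|^{1-n}$ (seen by the substitution $s=r|x-y|$), hence locally integrable and harmless after one formal differentiation, which produces a principal value. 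The identity $\dive B_{B(0,1)}g=g-b\int g\,dx$ is then the classical direct computation using only $\int b=1$, and it passes to $B_Q$ through \eqref{eq:ref_bogo} and the chain rule, the bump on the right-hand side being the corresponding rescaled normalisation.

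The substance is the Calder\'on--Zygmund part. Extending $f$ by zero and working on a bounded neighbourhood of $\overline{\Omega}$, I would perform the substitution $z=y+r(x-y)$ together with a Taylor expansion of $b$ about $x$ to write, for each $j$,
\[
\partial_{x_j}B_{B(0,1)}f(x)=c_j(x)f(x)+\mathrm{p.v.}\!\int K_j(x,x-y)f(y)\,dy+\int G_j(x,y)f(y)\,dy,
\]
where $K_j(x,\cdot)$ is homogeneous of degree $-n$ with vanishing mean over spheres and depends smoothly on $x$, the function $c_j$ is bounded, and $G_j$ is a bounded kernel. The Calder\'on--Zygmund theorem for operators with variable kernel (the H\"ormander-type regularity in the first slot coming from the smoothness of $b$) then gives $\nabla B_{B(0,1)}\colon L^p\to L^p$ for $1<p<\infty$, i.e.\ $B_{B(0,1)}\colon L^p\to W^{1,p}$. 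For integer $s\geq 1$ I would iterate, using a commutator identity $\partial_j B_{B(0,1)}=B_{B(0,1)}\partial_j+R_j$ with $R_j$ of the same singular-integral type, so that $\partial^{\gamma}B_{B(0,1)}f$ is a finite sum of terms (singular or bounded operator)$(\partial^{\gamma'}f)$ with $|\gamma'|=|\gamma|$; non-integer $s$ follows by interpolation between consecutive integer orders. The Poincar\'e integral $B_{B(0,1)}^{*}$ has a spatial derivative of exactly the same structure — here the smoothness and compact support of $b$ directly control the $x$-dependence and the decay of the kernel — so the same scheme yields $B_{B(0,1)}^{*}\colon W^{s,p}\to W^{s+1,p}$.

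The main obstacle is precisely this last step: extracting the principal-value singular integral together with its spherical cancellation from the explicit formula, verifying the regularity in the space variable needed to apply the variable-kernel Calder\'on--Zygmund theorem, and bookkeeping the derivative commutations so that all constants depend only on $n$, $p$, $s$ and, through the rescaling, on $Q$. All of this is carried out in \cite{MR2240056} and the references therein, so in practice I would reproduce that argument rather than reinvent it; by comparison, the support statement and the divergence identity are soft.
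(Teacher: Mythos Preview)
Your proposal is correct and matches the paper's own treatment: the paper does not prove this proposition at all but simply cites \cite{MR2240056}, which is exactly the reference you invoke for the Calder\'on--Zygmund analysis. Your sketch of the scaling reduction, the support/divergence arguments, and the singular-integral structure of $\nabla B_{B(0,1)}$ is the standard route taken in that reference, so there is nothing to add.
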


We upgrade the proposition to the case of domains consisting of finitely many cubes of equal side length. 
These domains are still time independent,
and they could be handled using any of the constructions in the literature (e.g.\ \cite{MR2240056}).
To keep the presentation self-contained,
we include the following simple ad-hoc argument based on \cite{MR2643399}.
One should notice, however,
that in many situations a better constant can be achieved
by the use of a more carefully chosen covering for the stationary construction.
 
\begin{proposition}
  \label{prop:trivial_space}
Let $ \delta > 0 $ and fix a set of points $ \{z_i\}_{i=1}^{M} \subset \delta \mathbb{Z}^{n} $ such that 
\[
S := \bigcup_{i=1}^{M} Q(z_i, \delta)
\] 
is connected. 
Denote $ S(\delta) = \{x \in \rn : \dist(x,S) < 2 \delta \sqrt{n}   \} $.

Then there is a linear operator $ B : C_0^{\infty}(S) \to C_0^{\infty}(S(\delta);\rn) $
such that for all $ f \in C_0^{\infty}(S(\delta)) $ with mean value zero
it holds
\begin{equation*}
  \dive Bf = f,
\end{equation*}
and we have the a priori estimates
\[
\|Bf\|_{\dot{W}^{s+1,p}(S(\delta);\rn)} \le C\|f\|_{\dot{W}^{s,p}(S(\delta))} 
\]
and 
\[
\|B^{*}f\|_{\dot{W}^{s+1,p}(S(\delta))} \le C\|f\|_{\dot{W}^{s,p}(S(\delta);\rn)}
\]
for all $1 <p < \infty $ and $ s \ge 0 $.
Here $ C $ only depends on $s$, $ p $, $ n $, $ \delta $ and $M$.
\end{proposition}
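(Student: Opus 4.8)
The plan is to reduce the problem on the multi-cube domain $S(\delta)$ to the single-cube case of Proposition \ref{prop:reference} by means of a decomposition operator that writes a mean-zero function $f$ on $S(\delta)$ as a sum $f = \sum_i f_i$ of mean-zero functions, each $f_i$ supported in (a controlled neighbourhood of) one cube $Q(z_i,\delta)$, with $\ell^p$ control $\sum_i \|f_i\|_{\dot W^{s,p}}^p \lesssim \|f\|_{\dot W^{s,p}}^p$. This is exactly the kind of decomposition established in \cite{MR2643399} for John domains, and a union of finitely many equal-size dyadic-type cubes forming a connected set is a John domain with constants depending only on $M$, $\delta$, $n$. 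First I would fix an enumeration of the cubes along a spanning tree of the adjacency graph of $\{Q(z_i,\delta)\}$, pick for each cube a smooth partition-of-unity-type cutoff, and define $f_i$ by the standard telescoping/flux-transport construction: one peels off $\varphi_i f$, then corrects the mean by pushing the excess mass along the tree edges using fixed bump functions supported in the overlaps $Q(z_i,\delta)\cap Q(z_j,\delta)$ (nonempty by connectedness after passing to the $2\delta\sqrt n$-fattening $S(\delta)$, which is what that fattening is for). Since there are only finitely many cubes, all the $\ell^p$ bookkeeping is trivial and the constants depend only on the stated data.

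The key steps, in order, are: (i) record that $S(\delta)$ is star-shaped-with-respect-to-a-ball locally and John globally, with constants controlled by $M,\delta,n$; (ii) build the finite smooth partition of unity $\{\varphi_i\}$ subordinate to $\{Q(z_i,\delta)\}$ and the transport bumps along a spanning tree; (iii) define $f_i := \varphi_i f + (\text{incoming transported mass}) - (\text{outgoing transported mass})$ so that each $f_i$ has mean zero, is supported in a cube of $S(\delta)$, and $\|f_i\|_{\dot W^{s,p}(S(\delta))} \lesssim \|f\|_{\dot W^{s,p}(S(\delta))}$ (here one uses the Leibniz rule and Poincaré's inequality on the cube to absorb the derivatives that fall on the cutoffs); (iv) set $Bf := \sum_i B_{Q(z_i,\delta)} f_i$ and invoke Proposition \ref{prop:reference} cube by cube to get $\dive B_{Q(z_i,\delta)} f_i = f_i - b_i \int f_i = f_i$ and $\|B_{Q(z_i,\delta)}f_i\|_{\dot W^{s+1,p}} \lesssim \|f_i\|_{\dot W^{s,p}}$; summing gives $\dive Bf = \sum_i f_i = f$ and, by the triangle inequality together with step (iii), $\|Bf\|_{\dot W^{s+1,p}(S(\delta);\rn)} \lesssim \|f\|_{\dot W^{s,p}(S(\delta))}$; (v) for the adjoint bound, note $B^{*} = \sum_i D_i^{*} B_{Q(z_i,\delta)}^{*}$ where $D_i$ is the $i$-th component of the decomposition operator, so $D_i^{*}$ is bounded on $\dot W^{s,p}$ by duality from step (iii), $B_{Q(z_i,\delta)}^{*}$ is bounded by Proposition \ref{prop:reference}, and again the finite sum is harmless.

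The main obstacle is step (iii): arranging that the mean-correction terms genuinely land inside cubes belonging to $S(\delta)$ and that differentiating the cutoffs does not destroy the $\dot W^{s,p}$ bound. The first point is what forces the passage to the fattened domain $S(\delta) = \{x : \dist(x,S) < 2\delta\sqrt n\}$: adjacent cubes in the original $S$ may only share a face or even just a corner, so the transport bumps and the supports of $\varphi_i$ need a little room, and $2\delta\sqrt n$ is the diameter of a cube, which is the natural amount. The second point is handled by the classical trick that each $f_i$ has mean zero on its cube, so Poincaré's inequality on $Q(z_i,\delta)$ converts lower-order terms (where derivatives hit $\varphi_i$ or the bumps) back into $\|f\|_{\dot W^{s,p}}$ up to constants depending on $\delta$ and $s$; one iterates this over the finitely many derivative orders $0,\dots,s$. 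Everything else is routine given Propositions \ref{prop:reference} and the finiteness of the cube collection.
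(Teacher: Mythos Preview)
Your strategy is the same as the paper's: decompose $f$ into mean-zero pieces supported near individual cubes by pushing mass along a tree/chain structure connecting the cubes to a fixed root, apply the single-cube Bogovskij operator $B_{Q_i}$ from Proposition~\ref{prop:reference} to each piece, and sum. The paper writes the decomposition operator $T_i$ explicitly via minimal chains $c_j(k)$ from cube $j$ to cube $1$, which is exactly your spanning-tree transport.

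Two small points to correct. First, the Poincar\'e step: the mean-zero property of $f_i$ on its cube does \emph{not} let you control the lower-order Leibniz terms $(\partial^{\gamma-\gamma'}\varphi_i)(\partial^{\gamma'} f)$, because you need bounds on $\|\partial^{\gamma'} f\|_{L^p}$ for $|\gamma'|<s$, not on $f_i$. The paper instead uses that $f \in C_0^\infty(S)$ and applies the trace-zero Poincar\'e inequality on $S$ to $f$ itself, which gives $\|\partial^{\gamma'} f\|_p \lesssim \|\nabla^{s} f\|_p$ globally; this is what you should invoke. Second, the adjoint bound for $D_i^*$ does not follow from abstract duality in the way you sketch: boundedness of $D_i$ on $\dot W^{s,p}$ only gives $D_i^*$ bounded on $(\dot W^{s,p})^*$, which is not $\dot W^{s+1,p}$. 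The paper writes $T_i^*$ out explicitly (it has the same structure as $T_i$, just with the roles of the bump functions swapped in the pairings) and bounds it by the same Leibniz-plus-Poincar\'e argument. Do the same.
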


\begin{proof}
We decompose $ f $ into a sum of functions with mean value zero and support in $ Q(z_i,2\delta) $.
Let $ \eta_i \ge 0$ be a smooth function that is supported in $  Q(z_i,2 \delta)  $, is bounded by one
and satisfies
\[
1_{S}(x) \sum_{i=1}^{M} \eta_i(x) = 1_S(x).
\]
We define $\tilde{\eta}_i$ through 
\[
\tilde{\eta}_{i}(x) = \left( \int \eta_i(y) \right)^{-1} \eta_i(x).
\]
We call $ i $ and $ j $ neighbors if $ |z_i - z_j| \le \sqrt{n} \delta $.
Given $ z_{j} $, let $\{c_j(k)\}_{k=1}^{L(j)} \subset \{1, \ldots, M\} $ be a tuple such that $ c_j(k) $ and $ c_j(k-1) $
are neighbors for all $ k > 0 $, $ c_j(1) = j $ and $ c_j(L(j)) = 1 $.
In addition, we choose the tuple so that $ L(j) $ is minimal.
Define 
\[
T_i f = f \eta_i - \la{f, \eta_i} \tilde{\eta}  + \sum_{k=1}^{\infty} \sum_{j=1}^{M} 1_{\{j:\ i = c_j(k),\ j\ne i \} } \la{f,\eta_j} (\tilde{\eta}_{c_j(k-1)} - \tilde{\eta}_{c_j(k)}).
\]
Because the functions $ \tilde{\eta}_j $ have mean value one,
it readily follows that $ T_i f(x) $ has mean value zero.

By minimality of $ L(j) $, each $ i $ satisfies $ c_j(k) = i  $ for at most one $ k $.
Summing over $ i $ and telescoping, 
the sum above becomes
\[
  \sum_{j=1}^{M} \sum_{k=2}^{L(j)} \la{f,\eta_j} (\tilde{\eta}_{c_j(k-1)} - \tilde{\eta}_{c_j(k)}) = 
  \sum_{j=1}^{M} \la{f,\eta_j} (\tilde{\eta}_{c_j(1)} - \tilde{\eta}_{c_j(L(j))}) .
\]
Here $ c_j(1) = j $ and $ c_j(L(j)) = 1 $, 
and consequently
\[
  \sum_{i=1}^{M} T_if = f  - \eta_1 \sum_{j=1}^{M}  \la{f,\eta_j} = f.
\]

Now we have formed the decomposition.
Denote $ Q_i = Q(z_i,\delta) $
and define
\[
B f = \sum_{i=1}^{M} B_{Q_i} T_i f.
\]
Clearly $ \dive Bf = f $. 
As $ B_{Q_i}T_i f $ is supported in $ Q(z_i,2 \delta) $,
we see that $ Bf $ is supported in $ S(\delta) $.
Finally, as the overlap of $ Q(z_i,2\delta) $ is bounded by a dimensional constant,
we deduce by means of Proposition \ref{prop:reference} for $ \gamma \in \N^{n} $
\[
\|\partial^{\gamma} B f\|_p^{p}
 \le C \sum_{i=1}^{M}\|\partial^{\gamma} B_{Q_i} T_i f\|_{p}^{p}
 \le C \sum_{i=1}^{M} \sum_{|\gamma'| = |\gamma| - 1}\|\partial^{\gamma'} T_i f\|_{p}^{p} .
\]
Expanding the definition of $ T_i $,
applying Leibniz rule,
using that $ \eta_i $ have bounded derivatives 
and finally applying Poincar\'e's inequality for trace zero functions in $S$,
we conclude the claimed bounds for Sobolev spaces of integer order.
The bounds on fractional Sobolev spaces follow by interpolation
although we do not need them here.
This concludes the proof for $B$.

The adjoint operator is defined through 
\begin{equation*}
T_i^{*}g = g \eta_i - \la{g, \tilde{\eta}_i} \eta_i  + \sum_{k=1}^{\infty} \sum_{j=1}^{M} 1_{\{j:\ i = c_j(k),\ j\ne i \} }    \la{g,\tilde{\eta}_{c_j(k-1)}-\tilde{\eta}_{c_j(k)}}\eta_j
\end{equation*}
and 
\begin{equation*}
B^{*} g = \sum_{i=1}^{M} T_i^{*} B_{Q_i}^{*} g.
\end{equation*}
We may notice \seb(by the definition of $T^*$) that $B^{*}g$ is supported in $S(\delta)$.
Using the Leibniz rule, Poincar\'e's inequality and Proposition \ref{prop:reference},
we conclude the desired bounds for $B^{*}$.
\end{proof}

The simple operators above can be extended to operators acting on space-time functions by 
a trivial extension,
that is,
as operators that ignore the time variable.
Such an operator is used to handle space-time cylinders.
To deal with shapes more general than constant in time cylinders,
we weld together thin cylindrical domains.
The operator living in the resulting domain can be regarded as an error term
in the main construction later.
The relevant estimates are easy to prove, 
but we still ignore the fine boundary behavior of the space-time domain,
which will later be the main problem.

\begin{proposition}
  \label{prop:interior_bogovski}
Let $ \delta > 0 $ and let $ T > 0 $ be an integer.  
For each $ j \in \{1,\ldots,T\} $, 
let $ S_j $ be a finite union of $ \delta $-cubes as in Proposition \ref{prop:trivial_space}.
Denote $ I_j = [\delta(j-1),j\delta)$, 
$ S_{I_j} = I_j \times S_j $, and 
\[
\Xi = \bigcup_{j=1}^{T} S_{I_j} .
\]
Set $ \Xi(\delta) = \{(t,x) \in \R ^{1+n}: \dist(x,S_j) < 2 \delta \sqrt{n} ,\ t \in \bigcup_{k=j-1}^{j+1} I_k \}  $.

Then there exists a linear operator $ B : C_0^{\infty}(\Xi) \to C_0^{\infty}(\Xi(\delta);\rn) $
such that for all $ f \in C_{0,smz}^{\infty}(\Xi) $ 
\[
  \dive Bf = f
\]
and the following a priori estimates hold.
For all $1 <p < \infty $, $ \lambda \ge 0 $ an integer, $ s \ge 0$, $ h \to 0 $ and all $ t $
\begin{align*}
\no{\partial_t^{\lambda} Bf(t,\cdot)}_{\dot{W}^{s+1,p}(\Xi(\delta);\rn)} & \le C \sum_{\lambda'=0}^{\lambda} \no{\partial_t^{\lambda'} f(t,\cdot)}_{\dot{W}^{s,p}(\Xi(\delta))} \\
\no{ \Delta_{h} Bf(t,\cdot) }_{\dot{W}^{s+1,p}(\Xi(\delta);\rn)} & \le C \sum_{\lambda'=0}^{\lambda} \no{ \Delta_{h}f(t,\cdot) }_{\dot{W}^{s,p}(\Xi(\delta))} + O(h).
\end{align*}
The adjoint operator $B^{*}$ satisfies 
\begin{align*}
\no{\partial_t^{\lambda} B^{*}f(t,\cdot)}_{\dot{W}^{s+1,p}((\Xi(\delta)))} & \le C \sum_{\lambda'=0}^{\lambda} \no{\partial_t^{\lambda'} f(t,\cdot)}_{\dot{W}^{s,p}(\Xi(\delta);\rn)} \\
\no{ \Delta_{h} B^{*}f(t,\cdot) }_{\dot{W}^{s+1,p}(\Xi(\delta))} & \le C \sum_{\lambda'=0}^{\lambda} \no{ \Delta_{h}f(t,\cdot) }_{\dot{W}^{s,p}(\Xi(\delta);\rn)} + O(h).
\end{align*}
Here $ C $ only depends on $ p $, $ s $, $ l $, $ n $ and the data of $\Xi$ 
(referring to all constants from Proposition \ref{prop:trivial_space}) but not on $ t $, $ f $ or $ h $.
\end{proposition}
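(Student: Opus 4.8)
The plan is to reduce to Proposition~\ref{prop:trivial_space} applied to pairs of consecutive spatial layers, and to patch the resulting time-independent operators together with a partition of unity in the time variable whose transition regions are placed in the interiors of the slabs, where the test function still lives over a single spatial slice. Concretely, for $j\in\{1,\dots,T-1\}$ I would set $\hat S_j=S_j\cup S_{j+1}$, again a finite union of $\delta$-cubes with centres in $\delta\Z^n$ (if it is disconnected one argues componentwise, which is harmless since every $f(t,\cdot)$ with $t\in I_j\cup I_{j+1}$ is supported in one of the connected sets $S_j$, $S_{j+1}$ and has vanishing mean there), and let $\hat B^{(j)}$, $(\hat B^{(j)})^{*}$ be the operators on $\hat S_j(\delta)=S_j(\delta)\cup S_{j+1}(\delta)$ furnished by Proposition~\ref{prop:trivial_space}, with constants uniform over the finitely many $j$. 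Next I would fix a smooth partition of unity $\{\rho_j\}_{j=1}^{T-1}$ of $[0,T\delta)$ with $\supp\rho_j\subset(\delta(j-1),\delta(j+1))$ and $|\rho_j^{(m)}|\le C\delta^{-m}$, arranged so that $\rho_j$ and $\rho_{j+1}$ overlap only in the interior of $I_{j+1}$ --- so at each interface $t=\delta j$ exactly one cut-off, $\rho_j$, is active and equals one --- and set
\[
Bf(t,\cdot)=\sum_{j=1}^{T-1}\rho_j(t)\,\hat B^{(j)}\!\big(f(t,\cdot)\big),\qquad
B^{*}g(t,\cdot)=\sum_{j=1}^{T-1}\rho_j(t)\,\big(\hat B^{(j)}\big)^{*}\!\big(g(t,\cdot)\big).
\]

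To verify the defining properties: whenever $\rho_j(t)\neq0$ one has $t\in I_j\cup I_{j+1}$, hence $\supp f(t,\cdot)\subset\hat S_j$, so $\hat B^{(j)}(f(t,\cdot))$ is defined, and if moreover $f(t,\cdot)$ has vanishing mean then $\int_{\hat S_j(\delta)}f(t,x)\,dx=0$ and $\dive\hat B^{(j)}(f(t,\cdot))=f(t,\cdot)$; summing and using $\sum_j\rho_j\equiv1$ gives $\dive Bf=f$. Each $\hat B^{(j)}$ integrates only in the space variable, so $(t,x)\mapsto\hat B^{(j)}(f(t,\cdot))(x)$ is jointly smooth and commutes with $\partial_t$; near an interface $t=\delta j$ only $\rho_j$ is active, so $Bf=\rho_j\hat B^{(j)}f$ there and is smooth across it, while in the interior of a slab $I_m$ only $\rho_{m-1},\rho_m$ are active and both $\hat B^{(m-1)},\hat B^{(m)}$ accept $f(t,\cdot)$ (supported in $S_m\subset\hat S_{m-1}\cap\hat S_m$), so $Bf$ is smooth there too; compact support of $Bf$ is inherited from $f$. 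Each term $\rho_j\hat B^{(j)}f$ is supported in $\hat S_j(\delta)$, and for $t\in I_m$ the active indices are $j\in\{m-1,m\}$, so $\supp Bf(t,\cdot)\subset S_{m-1}(\delta)\cup S_m(\delta)\cup S_{m+1}(\delta)=\Xi(\delta)_t$, which is exactly what the threefold fattening in the definition of $\Xi(\delta)$ provides. The bookkeeping for $B^{*}g$ is identical, using that $(\hat B^{(j)})^{*}$ is built from Poincar\'e integrals supported well inside $\hat S_j(\delta)$ and hence acts on all of $C_0^{\infty}(\Xi(\delta)_t;\rn)$ with the same bound.

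For the a priori estimates I would differentiate $\rho_j(t)\hat B^{(j)}(f(t,\cdot))$ by the Leibniz rule, pull the time derivatives inside $\hat B^{(j)}$, and apply the uniform $\dot W^{s,p}\to\dot W^{s+1,p}$ bounds of Proposition~\ref{prop:trivial_space} together with $|\rho_j^{(m)}|\le C$ and the at-most-twofold overlap of the cut-offs; re-indexing the time derivatives that land on $f$, and using that $\supp f(t,\cdot)$ lies in the common part of $\hat S_j(\delta)$ and $\Xi(\delta)_t$, yields exactly $C\sum_{\lambda'=0}^{\lambda}\no{\partial_t^{\lambda'}f(t,\cdot)}_{\dot W^{s,p}(\Xi(\delta))}$. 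For the finite-difference estimate I would write, for $|h|$ small,
\[
\Delta_h Bf(t,\cdot)=\sum_j\rho_j(t+h)\,\hat B^{(j)}\!\big(\Delta_h f(t,\cdot)\big)+\sum_j\big(\rho_j(t+h)-\rho_j(t)\big)\,\hat B^{(j)}\!\big(f(t,\cdot)\big),
\]
the first sum being estimated as before and the second, bounded by $C|h|\,\no{f(t,\cdot)}_{\dot W^{s,p}(\Xi(\delta))}$, giving the $O(h)$ remainder; the estimates for $B^{*}$ follow verbatim from the adjoint bounds of Proposition~\ref{prop:trivial_space}.

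The one genuinely delicate point --- and the reason for all the enlargements --- is the patching across the slab interfaces: the slice operators depend discontinuously on $j$, and the spatial slice itself jumps at $t=\delta j$, so neither $B^{(j)}$ nor $B^{(j+1)}$ can be applied on both sides of an interface; this is what forces enlarging the source to the merged layers $\hat S_j$, enlarging the target to the threefold fattened $\Xi(\delta)$, and placing the transitions of the temporal partition of unity in the interiors of the slabs, where the test function is still supported over a single slice. Granting that, the remainder is a routine combination of Proposition~\ref{prop:trivial_space}, the Leibniz rule, and the finite overlap of the cut-offs.
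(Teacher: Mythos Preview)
Your argument is correct and follows the same strategy as the paper: take a smooth partition of unity in the time variable and patch together the time-independent operators from Proposition~\ref{prop:trivial_space}. The paper's proof is considerably terser---it simply sets $Bf=\sum_{j=1}^{T}\eta_j B_j f$ with $\supp\eta_j\subset[\delta(j-3/2),\delta(j+1/2)]$ and $B_j$ the operator of Proposition~\ref{prop:trivial_space} relative to the \emph{single} layer $S_j$, and declares the rest ``easy to see.''

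Your version differs in one substantive respect: you build $\hat B^{(j)}$ over the \emph{merged} layer $\hat S_j=S_j\cup S_{j+1}$ and place the transitions of the temporal cut-offs in the interiors of the slabs rather than across the interfaces. This is a genuine improvement in rigor. In the paper's arrangement, for $t$ in the overlap of $\eta_j$ and $\eta_{j+1}$ (which straddles $t=\delta j$) the slice $f(t,\cdot)$ may be supported in $S_{j+1}$ while $B_j$ is only guaranteed by Proposition~\ref{prop:trivial_space} to invert the divergence on functions supported in $S_j$; the paper does not spell out why $\dive B_j f(t,\cdot)=f(t,\cdot)$ then holds. Your merging trick sidesteps this cleanly: at each interface only one operator is active, and it is built for both adjacent layers. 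The cost is negligible (the target space still fits inside the threefold fattening $\Xi(\delta)$, as you check), and the estimates go through identically via Leibniz and the uniform bounds of Proposition~\ref{prop:trivial_space}. One cosmetic point: your cut-offs $\rho_j$, $j=1,\dots,T-1$, have supports contained in $(0,T\delta)$, so the endpoint $t=0$ is not covered; this is harmless since $f\in C_0^\infty(\Xi)$ vanishes near the temporal boundary, but you may want to say so.
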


\begin{proof}
Take smooth functions $ \eta_j : [0,\delta T] \to [0,1] $ 
such that $\supp \eta_j \subset [\delta(j-3/2),\delta(j+1/2)] \cap [0,\delta T] $ and
\[
 \sum_{j=1}^{T} \eta_j(t) = 1_{[0,\delta T]}.
\]
Let $ B_j $ be the operator from Proposition \ref{prop:trivial_space} relative to $ S_j $.
It is then easy to see that 
\[
B f = \sum_{j=1}^{T} \eta_j B_j f
\]
and its adjoint
satisfy the assertions of the proposition.
\end{proof}

Now we are ready with the interior piece.
We can turn the attention to the boundary.
The idea is to use the graph structure in order to exploit line integrals as opposed to curves that appear in many time independent constructions.
The explicit description of the curve family of straight lines
is easy to handle when the domain changes in time.
Less explicit constructions would quickly become intractable.

\begin{proposition}
  \label{prop:bdry_bogovski}
  Let $ R $ be an $ n - 1 $-dimensional rectangle, 
$ I $ an interval, $ \beta \in [0,1] $ 
and $ \psi : I \times R \to (0,\infty) $ a function which is $ \beta $-H\"older continuous (uniformly continuous if $ \beta = 0 $) with respect to space
  and measurable with respect to time.
  Let 
  \begin{align*}
  P &= \{ (t,x',x_n) \in I \times R : x_n < \psi(t,x') \},  \\
  d(t,x) &= \dist(x,\{(\xi',\psi(t,\xi')) : \xi' \in R \} ).
  \end{align*}
  Then there exists a linear operator $ B $ on $ C^{\infty} (P) $
  such that for all test functions $ f $
  \[
  \dive Bf = f, \quad Bf \in C^{\infty}(P;\rn)
  \]
  and the a priori estimates 
  \begin{align*}
    \int \left( \frac{|Bf(t,x)|}{d(t,x)^{\beta}} \right)^{p} d(t,x)^{\beta b}  \, dx
    & \le C \int |f(t,x)|^{p} d(t,x)^{\Theta b} \, dx \\
     \sum_{\substack{\gamma \in \{0\} \times \N^{n} \\ |\gamma| = k }} |\partial^{\gamma} Bf(t,x)| &\le 
     \sum_{\substack{\gamma \in \{0\} \times \N^{n} \\ |\gamma| = k }} |B\partial^{\gamma} f(t,x)| 
    +  \sum_{\substack{\gamma \in \{0\} \times \N^{n} \\ |\gamma| = k-1 }} |\partial^{\gamma} f(t,x)| 
  \end{align*}
hold with 
\[
\Theta = \begin{cases}
  1, & \quad  \trm{if $ b \le 0$}\\
  \beta, & \quad \trm{if $0 < b  \le p$ and $ p < (p-1) /\beta$}\\
  \beta^{2}, & \quad  \trm{if $p <  b < (p-1)/\beta$ or $0 < b < (p-1)/\beta \le p$}
\end{cases}.
\]
uniformly in $ t $ and for all integers $k \ge 1$.
The constant satisfies 
\[
C = C(n,p,b) \|\psi \|_{C^{\beta}}
\]
and $ B $ commutes with $\partial_t $ and finite differences $ \Delta_h $ in the time variable.

If it holds in addition that $ f \in C_0^{\infty}(P) $,
then there also exists an $ \epsilon(f) > 0 $ 
such that $ Bf = 0 $ whenever $\psi(t,x') -  x_n < \epsilon$.
\end{proposition}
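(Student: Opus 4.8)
The plan is to write $B$ as an explicit Bogovskij-type operator adapted to the downward direction $-e_n$, which the graph structure of $P$ singles out. Since $\psi>0$, every point $x=(x',x_n)\in P_t$ sees the region $\{x_n\to-\infty\}$ along a vertical ray staying inside $P_t$, so $P_t$ is star-shaped towards the ``point at infinity'' in direction $-e_n$. The model formula $[Bf(t,x)]_n=-\int_{x_n}^{\psi(t,x')}f(t,x',s)\,ds$ already has $\dive Bf=f$ and vanishes on the graph, but it is not differentiable in $x'$ because $\psi$ has no spatial derivatives; so I would fan the top endpoint over the graph points $(u',\psi(t,u'))$ with $u'\in B(x',\rho)$, $\rho=\rho(t,x)\sim\dist(x,\partial P_t)$, weighted by a fixed bump $\omega$, just as in the classical construction but with the base sent to infinity. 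Then $\dive Bf=f$ follows from the same Fubini / fundamental-theorem computation as in Proposition \ref{prop:reference}, with no ``$b\int f$'' defect — which is why no slice-mean-zero hypothesis is needed here. Commutation with $\partial_t$ and with $\Delta_h$ follows because the data defining the kernel (the direction $-e_n$, the bump $\omega$, the self-similar aperture $\rho$) never involve differentiating $\psi$ in time, so these operations only reach $f$. Finally, if $f\in C_0^\infty(P)$ vanishes within distance $\epsilon_0$ of the graph, then for $x$ with $\psi(t,x')-x_n$ smaller than a suitable $\epsilon(f)\sim\epsilon_0$ the whole fan stays inside $\{\psi(t,\cdot)-(\cdot)_n<\epsilon_0\}$, because $\psi$ is $\beta$-Hölder and $\rho\sim(\psi-x_n)^{1/\beta}$ there, hence $Bf(t,x)=0$.

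For the weighted bound I would work on a fixed slice and change variables $x_n\mapsto\delta:=\psi(t,x')-x_n\in(0,\infty)$, which maps $P_t$ onto $R\times(0,\infty)$ and is measure preserving in $x_n$. One records the elementary comparisons $d(t,x)\le\delta$ always, and $d(t,x)\sim(\delta/\|\psi\|_{C^\beta})^{1/\beta}$, i.e.\ $d^\beta\sim\delta$, near the graph. In these coordinates $|Bf|$ is controlled by $\int_0^\delta|g(x',\tau)|\,d\tau$ with $g(x',\tau)=f(t,x',\psi(t,x')-\tau)$, i.e.\ by $\delta$ times the Hardy average of $g$, uniformly in $x'$, so that
\[
\int_{P_t}\Big(\tfrac{|Bf(t,x)|}{d^\beta}\Big)^p d^{\beta b}\,dx\ \lesssim\ \int_R\int_0^\infty\Big(\tfrac1\delta\int_0^\delta|g(x',\tau)|\,d\tau\Big)^p\delta^{b}\,d\delta\,dx'\ \lesssim\ \int_R\int_0^\infty|g(x',\delta)|^p\delta^{b}\,d\delta\,dx',
\]
the last inequality being the one-dimensional weighted Hardy inequality \eqref{eq:hardy_classical}, which needs $b<p-1$. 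The three cases of $\Theta$ then come from rewriting the weight $\delta^b=(\psi-x_n)^b$ as a power of $d$: if $b\le0$ one just uses $\delta^b\le d^b$ and gets $\Theta=1$; the middle case $0<b\le p$ with $p<(p-1)/\beta$ uses $\delta^b\sim d^{\beta b}$ to convert the weight and gives $\Theta=\beta$; and if $b$ lies outside the range of \eqref{eq:hardy_classical} the vertical reduction fails and one must instead exploit the horizontal averaging at the Hölder scale $\rho\sim\delta^{1/\beta}$ — using the adjoint Hardy inequality \eqref{eq:hardy_classical2} after splitting the $\delta$-integral where the regime of $d$ switches — which costs one more factor $\beta$ in the exponent and leaves $\Theta=\beta^2$. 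The change of variables and the Hölder comparison contribute exactly one power of $\|\psi\|_{C^\beta}$, accounting for $C=C(n,p,b)\|\psi\|_{C^\beta}$.

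The pointwise estimate on higher derivatives I would get from the commutator identity $\partial_i(Bf)=B(\partial_i f)+S_i f$, where $S_i$ has the same line-integral structure but a kernel one order less singular (it is produced by differentiating $\omega$ and the regularized aperture $\rho$, together with an endpoint term) and hence obeys $|S_i f|\lesssim|f|$ pointwise; iterating $k$ times and collecting terms gives $\sum_{|\gamma|=k}|\partial^\gamma Bf|\lesssim\sum_{|\gamma|=k}|B\partial^\gamma f|+\sum_{|\gamma|=k-1}|\partial^\gamma f|$.

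The hard part will be the weighted $L^p$ bound with the sharp $\Theta$. Two requirements pull against each other: the aperture $\rho(t,x)$ of the downward fan must be wide enough that differentiating $Bf$ spatially never exposes the non-existent $\nabla'\psi$, yet narrow enough that a fan based at depth $\delta$ sees $f$ only at comparable depth — which is what makes the left-hand weight $d^\beta$ correct and what yields the $C_0^\infty$ localization. Once $\rho$ is calibrated to the Hölder scale $\delta^{1/\beta}$, what remains is a careful tracking of the interplay between the vertical depth $\psi-x_n$ and the genuine distance $d$ inside one-dimensional Hardy inequalities, and it is exactly the range of validity of those inequalities ($b<p-1$ vs.\ $b\ge p-1$) together with the one-sided comparison $d\le\delta$ that forces the three-case formula for $\Theta$.
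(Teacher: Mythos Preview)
You are overcomplicating the construction, and in doing so you introduce a genuine error. The paper's operator is simply
\[
Bf(t,x',x_n)=\Big(0,\ldots,0,\ -\int_{x_n}^{\infty}f(t,x',s)\,ds\Big).
\]
The upper limit is $\infty$, not $\psi(t,x')$; the graph function never enters the formula for $B$. Hence $\partial_{x_i'}$ commutes exactly with $B$ and $\partial_{x_n}[Bf]_n=f$, so the pointwise derivative bound is an identity rather than an estimate. The same absence of $\psi$ from the formula makes commutation with $\partial_t$ and $\Delta_h$ automatic. Your concern that the endpoint $\psi(t,x')$ obstructs $x'$-differentiability is correct for the formula you wrote down, but it is precisely what the paper sidesteps by not writing that endpoint at all.

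Your fanned construction, by contrast, does not commute with $\partial_t$. You build the aperture $\rho(t,x)$ from $d(t,x)$, which depends on $t$ through $\psi(t,\cdot)$; therefore your kernel depends on $t$ and $\partial_t(Bf)\ne B(\partial_t f)$. Saying that the kernel ``never involves differentiating $\psi$ in time'' misses the point: commutation requires the kernel to be \emph{independent} of $t$, not merely to avoid $\partial_t\psi$. This breaks one of the properties you are asked to prove, and it is exactly the property on which the paper's whole space--time construction rests.

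For the weighted bound your outline is right when $b\le p$, but your plan for $\Theta=\beta^2$ is off. No adjoint Hardy inequality and no horizontal averaging are needed. When $p<b<(p-1)/\beta$ the paper still uses the \emph{same} one-dimensional Hardy inequality \eqref{eq:hardy_classical}, only with weight exponent $\beta b$ instead of $b$: one estimates $d^{\beta(b-p)}\le|x_n-\psi|^{\beta(b-p)}$ on the left (this uses $d\le|x_n-\psi|$ and $b>p$), applies Hardy with weight $|x_n-\psi|^{\beta b}$ (admissible since $\beta b<p-1$), and finishes with $|x_n-\psi|^{\beta b}\le Cd^{\beta^2 b}$ on the right. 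All three cases of $\Theta$ come from the two one-sided comparisons $d\le|x_n-\psi|\le Cd^{\beta}$ and nothing else.
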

 
\begin{proof}
By rotational and translational invariance of the operators involved,
we may assume that $R = \prod_{i=1}^{n} [0,r_{i}]$.
Using these coordinates,
we set
\[
Bf(t,x',x_n) = \left(0, \ldots, 0, - \int_{x_n}^{\infty} f(t,x',s) \, ds \right).
\]  
Then it is obvious that $ \dive Bf = f $.
The existence of $ \epsilon(f) $ as claimed is also clear from the formula.
Commutation with finite differences and derivatives in time variable is also immediate
as is the pointwise bound for high order derivatives.
It remains to prove the $L^{p}$ bound.

Because $ \psi $ is $\beta$-H\"older continuous,
we see that 
\[
d(t,x) \le |x_n - \psi(t,x')| \le  C d(t,x)^{\beta}
\] 
with a constant only depending on the dimension and the H\"older norm.
Then 
\[
|B(t,x)| \le C d(t,x)^\beta \abs{ \frac{1}{\psi(t,x')-x_n} \int_{0}^{\psi(t,x')-x_n} f(t,x', \psi(t,x')-s ) \, ds } .  
\]
After a change of variable in the $ x_n $ coordinate,
the expression inside the absolute value becomes the classical Hardy operator 
\[
Ag (s) =  \frac{1}{s} \int_{0}^{s} |g(s)| \, ds
\]
that satisfies the bound \eqref{eq:hardy_classical}
for all $ b < p-1 $.
Applying this in $ x_n $ variable,
we conclude by Fubini's theorem
\begin{multline*}
\int  \left(\frac{|Bf(t,x)|}{d(t,x)^{\beta}} \right)^{p}d(t,x)^{\beta b} \, dx
\\
\leq C\iint  \left(\frac{|Bf(t,x',x_n)|}{|x_n - \psi(t,x')|}\right)^{p}|x_n - \psi(t,x')|^{b} \, dx
 \\
\le C \iint  |f(t,x',x_n)|^{p} |x_n - \psi(t,x')|^{ b} \, dx_n dx 
\end{multline*}
whenever $b \le p$.
If $b \le 0$,
we conclude the case $\Theta = 1$.
If $0< b \le p $ with $b < (p-1)/\beta$, we conclude the case $\Theta = \beta$.
If $p < b $, 
we can directly estimate 
\[
d(t,x)^{\beta(b-p)} \le |x_n - \psi(t,x')|^{\beta(b-p)}, \quad |x_n - \psi(t,x')|^{\beta b} \le d(t,x)^{b\beta^{2}}
\]
instead 
and apply Hardy's inequality to conclude the bound 
\[
\int \left( \frac{|Bf(t,x)|}{d(t,x)^{\beta}} \right)^{p} d(t,x)^{b\beta}  \, dx
    \le C \int |f(t,x)|^{p} d(t,x)^{b\beta^{2}} \, dx
\]
whenever $b < (p-1)/\beta$
to get the remaining case $\Theta = \beta^{2}$.
\end{proof}

\begin{proposition}
\label{prop:adjointtrivial}
Consider the adjoint $B^{*}$ of the operator from Proposition \ref{prop:bdry_bogovski}. 
Then $B^{*}:C^\infty(\Omega,\rn)\to C^\infty(\Omega)$ commutes with differentiation and finite differences in time 
and the a priori estimates 
\begin{align*}
    \int |B^{*}f(t,x)|^{p} d(t,x)^{b\beta}   \, dx
    &\le C \int |f(t,x)|^{p} d(t,x)^{\Theta b+ \beta p} \, dx \\
    \sum_{\substack{\gamma \in \{0\} \times \N^{n} \\ |\gamma| = k }} |\partial^{\gamma} B^{*}f(t,x)| &\le 
    \sum_{\substack{\gamma \in \{0\} \times \N^{n} \\ |\gamma| = k }} |B^*\partial^{\gamma} f(t,x)| 
    + \sum_{\substack{\gamma \in \{0\} \times \N^{n} \\ |\gamma| = k-1 }} |\partial^{\gamma} f(t,x)| 
  \end{align*}
hold with 
\[
\Theta = \begin{cases}
  \beta, & \quad \trm{if $-1 < b < 0 $}\\
  \beta^{2} , & \quad  \trm{if $0 \le  b$}
\end{cases}.
\]
If $\beta = 0$,
the bounds for $b \ge 0$ still holds.
\end{proposition}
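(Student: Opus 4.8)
The plan is to run the argument of Proposition~\ref{prop:bdry_bogovski} with the classical Hardy inequality \eqref{eq:hardy_classical} replaced by its adjoint form \eqref{eq:hardy_classical2}. First I would make $B^{*}$ explicit. With $Bf=(0,\dots,0,-\int_{x_n}^{\infty}f(t,x',s)\,ds)$ in the coordinates fixed in Proposition~\ref{prop:bdry_bogovski}, Fubini's theorem applied to the pairing $\int_{P}Bf\cdot g\,dx$ shows that $B^{*}$ involves only the last component of its vector argument and that
\[
B^{*}g(t,x',x_n)=-\int_{-\infty}^{x_n}g_n(t,x',s)\,ds.
\]
Since time enters only as a parameter, $B^{*}$ commutes with $\partial_t$ and with finite differences in time. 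The pointwise bound for $k$-th order spatial derivatives follows exactly as the corresponding bound for $B$: a derivative either passes inside the integral or, when it is $\partial_{x_n}$, collapses it through $\partial_{x_n}B^{*}g=-g_n$.

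The only substantial point is the weighted $L^{p}$ estimate. Fixing $t$ and $x'$, substituting $u=\psi(t,x')-x_n$ and then $s\mapsto v=\psi(t,x')-s$, the formula above yields
\[
|B^{*}g|\le u\,A^{*}\tilde g(u),\qquad \tilde g(v):=g_n\bigl(t,x',\psi(t,x')-v\bigr),\qquad A^{*}h(u):=\tfrac1u\int_{u}^{\infty}|h(v)|\,dv,
\]
so that $A^{*}$ is precisely the operator estimated in \eqref{eq:hardy_classical2}. Using the H\"older comparison $d(t,x)\le|x_n-\psi(t,x')|=u\le C\,d(t,x)^{\beta}$ from Proposition~\ref{prop:bdry_bogovski}, I would replace $d^{b\beta}$ on the left by a power of $u$: for $b\ge0$ one has $d^{b\beta}\le u^{b\beta}$, whereas for $-1<b<0$ one uses instead $d^{b\beta}\le C\,u^{b}$. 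Then $|B^{*}g|^{p}d^{b\beta}$ is dominated by $\bigl(A^{*}\tilde g(u)\bigr)^{p}u^{p+b\beta}$, respectively $C\bigl(A^{*}\tilde g(u)\bigr)^{p}u^{p+b}$, and both exponents $p+b\beta$ and $p+b$ are strictly above $p-1$ precisely because $b>-1$; hence \eqref{eq:hardy_classical2} applies in the $u$-variable and, after integration in $x'$, bounds $\int|B^{*}g|^{p}d^{b\beta}\,dx$ by $C\int|\tilde g(v)|^{p}v^{p+b\beta}\,dv\,dx'$ when $b\ge0$, and by the same expression with $v^{p+b}$ when $-1<b<0$.

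To finish I would put the distance weight back on the right using the comparison $d\ge c\,v^{1/\beta}$ at the point $(t,x',\psi(t,x')-v)$, which is the argument of $g$: this gives $v^{p+b\beta}\le C\,d^{\beta p+\beta^{2}b}$ for $b\ge0$ and $v^{p+b}\le C\,d^{\beta p+\beta b}$ for $-1<b<0$, i.e. exactly the weights $d^{\Theta b+\beta p}$ with the two asserted values $\Theta=\beta^{2}$ and $\Theta=\beta$. Since $|\tilde g|\le|g|$, this yields the claimed inequality. If $\beta=0$ the comparison with $u^{1/\beta}$ degenerates, but then all weights collapse to $1$, and since $u$ ranges over a bounded interval the bound reduces to the elementary $L^{p}$ estimate for $u\mapsto\int_{u}^{\infty}\tilde g(v)\,dv$ via H\"older's inequality.

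The main point requiring attention is, as in Proposition~\ref{prop:bdry_bogovski}, the bookkeeping of which of the two comparisons $d\le u$ and $u\le C\,d^{\beta}$ is legitimate at each step, the admissible direction being dictated by the sign of the exponent at hand. In particular, the restriction $b>-1$ here, as opposed to the full range $b\le0$ admissible for $B$ in Proposition~\ref{prop:bdry_bogovski}, is forced by the condition $b>p-1$ under which the adjoint Hardy inequality \eqref{eq:hardy_classical2} holds.
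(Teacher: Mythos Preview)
Your proposal is correct and follows exactly the approach the paper takes: the paper's own proof says only that the same computation as in Proposition~\ref{prop:bdry_bogovski} is run with \eqref{eq:hardy_classical2} replacing \eqref{eq:hardy_classical}, and notes the $\beta=0$ case uses only $d\le u\le C$. You have simply expanded this sketch, writing out the explicit formula for $B^{*}$, carrying through the two cases $b\ge 0$ and $-1<b<0$ of the distance-to-$u$ comparison, and checking that the resulting exponents $p+b\beta$ and $p+b$ land in the admissible range $(p-1,\infty)$ for the adjoint Hardy inequality. One cosmetic point: where you write $|\tilde g|\le|g|$, this is actually an equality after the change of variables $v\mapsto\psi(t,x')-v$, but of course the argument is unaffected.
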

\begin{proof}
The proof follows from the same computation as above.
However, instead of \eqref{eq:hardy_classical} we use the adjoint form \eqref{eq:hardy_classical2}.
Note that the case $\beta = 0$ only uses the bound 
\[
d(t,x) \le |x_n - \psi(t,x')| \le C
\]
which is true without assumptions on continuity.
\end{proof}

Now we are in a position to put together the pieces and define a primitive Bogovskij operator in space-time domains with
a graph boundary.
We can use the primitive Bogovskij operator to decompose generic input data $ f $ into localized pieces $ f_j $,
to which we can apply the theory for circular cylinders, Proposition \ref{prop:reference}.

It is only the latter step that will make the operator map $ L^{p} $ to a first order Sobolev space.
Indeed, the primitive operator will not be well-behaved in any space involving Sobolev regularity.
As such, it only satisfies $ L^{p} $ bounds with a weight depending on the spatial H\"older regularity of the boundary.
While the estimates for the primitive operator hold in very general domains,
the more and more refined estimates require more and more boundary regularity from the domain.

The previously mentioned primitive operator 
corresponds to $ C^{0,0,\theta} $ domains.
The domains $ C^{0,\beta} $ host operators that are Sobolev regular in space.
In order to prove meaningful estimates with time derivatives,
we have to assume $C^{\alpha,\beta,\theta}$ regularity.
Lipschitz domains admit particularly simple estimates,
and there the construction reproduces the known stationary results,
if trivial time dependency is assumed.

In what follows we constantly assume $\Omega$ be a $C^{0,0,\theta}$ domain.
It comes with a distance map $d(t,x) = \dist(x, \partial \Omega_t)$,
whose all powers we extend by zero as $1_{\Omega}d(t,x)^{b}$ without further notice.
We also note that $d$ is uniformly bounded from above as our domain is,
and its fibers $t \mapsto d(x,t)$ 
are uniformly continuous as can be seen from the regularity assumption on the boundary of the domain.
\begin{theorem}
  \label{thm:bogovski_on_w1p}
  Let $ \alpha, \beta \in [0,1] $. 
  Assume that $ \Omega \subset \R ^{1+n} $ is a $C^{\alpha,\beta,\theta}$ H\"older domain.
  Then there exists a linear operator $ B $ acting on $C_{smz}^{\infty}(\Omega)$
 such that for all $f \in C_{smz}^{\infty}(\Omega)$ 
  \begin{itemize}
    \item $Bf$ is $C^{\infty}(\Omega)$ smooth;
    \item it holds $\dive Bf = f$;
    \item if $f$ is compactly supported in $\Omega$ so is $Bf$.
  \end{itemize}

  Let $ 1<p< \infty $, let $b\in \R$ and set 
\[
\Theta = \begin{cases}
  1, & \quad  \trm{if $ b \le 0$}\\
  \beta, & \quad \trm{if $0 < b  \le 1$ and $ p < (p-1) /\beta$}\\
  \beta^{2}, & \quad  \trm{if $1 <  b < (p-1)/\beta$ or $0<b <(p-1)/\beta \le p$}
\end{cases}.
\]
Then there exists a finite constant $ C $ only depending on $ \Omega $, $\delta$, $b$, $ p $, $\lambda$ and $ n $
such that for all times $ t $ and all compactly supported test functions $f$ 
\begin{align}
  \label{eqthm:lpweight}
  \|d(t,\cdot)^{\beta(b-1)} Bf(t,\cdot)\|_p  &\le C  \|d(t,\cdot)^{\Theta b} f(t,\cdot) \|_p ,\\
  \label{eqthm:lpcont}
  \|d(t,\cdot)^{\beta(b-1)} \Delta_h Bf(t,\cdot)\|_p  &\le C \|d(t,\cdot)^{\Theta b} \Delta_h f(t,\cdot) \|_p + o(1)
\end{align}
where the asymptotic estimate holds when $ h \to 0 $.
The estimate \eqref{eqthm:lpweight} holds even without the assumption on compact support.

If $ \beta > 0$, then for all compactly supported test functions
\begin{align}
  \label{eqthm:sobolev}
  \|d(t,\cdot)^{\beta(b-1) + 1} \nabla  Bf(t,\cdot)\|_p &\le C \|d(t,\cdot)^{\Theta b}f(t,\cdot)\|_p ,\\
  \label{eqthm:sobcont}
  \|d(t,\cdot)^{\beta(b-1) + 1} \nabla  \Delta_h Bf(t,\cdot)\|_p &\le C \|d(t,\cdot)^{\Theta b}\Delta_h f(t,\cdot)\|_p + o(1)
\end{align}
where the asymptotic estimate holds when $ h \to 0 $.
The estimate \eqref{eqthm:sobolev} holds even without the assumption on compact support.

If $\alpha > 0$, then
\begin{align}
  \label{eqthm:time}
  \|d(t,\cdot)^{\beta(b-1)}\partial_t^{\lambda} Bf(t,\cdot)\|_p &\le C \sum_{\lambda'=0}^{\lambda} \|d(t,\cdot)^{\Theta b - (\lambda- \lambda')/(\alpha \beta)}\partial_t^{\lambda'} f(t,\cdot)   \|_p 
\end{align}
holds for all test functions, not necessarily compactly supported.

Finally, if any of the right hand sides above is finite,
then $Bf$ is in the closure of $C_0^{\infty}(\Omega;\rn)$ with respect to the norm appearing as the relevant left hand side.

\end{theorem}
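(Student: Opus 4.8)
The plan is to decompose $\Omega$ into the boundary layer and the interior and build $B$ as a combination of the primitive boundary operator from Proposition~\ref{prop:bdry_bogovski}, the interior welding operator from Proposition~\ref{prop:interior_bogovski}, and the reference Bogovskij operator from Proposition~\ref{prop:reference}. First I would use the finitely many boundary rectangles $P_i = I_i \times R_i$ from Definition~\ref{def:domains} together with a subordinate partition of unity $\{\phi_i\}$ on $\overline{\Omega}$, one $\phi_0$ supported in the interior and one $\phi_i$ per boundary patch, so that $f = \sum_i \phi_i f$. Each $\phi_i f$ is supported in $P_i \cap \Omega$ (for $i \ge 1$), which is of the graph form $P$ handled by Proposition~\ref{prop:bdry_bogovski}, while $\phi_0 f$ is supported in a compact subset of $\Omega$ covered by a finite union of space-time cylinders as in Proposition~\ref{prop:interior_bogovski}. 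However, $\phi_i f$ no longer has slice mean zero, so I would correct this by subtracting a controlled "flux" term: on each slice, $\int \phi_i(t,x) f(t,x)\,dx$ need not vanish, so I would introduce transfer functions $g_{ij}(t,\cdot)$ of slice integral one, chained along a fixed finite "tree" connecting the patches to the interior, and replace $\phi_i f$ by $\phi_i f - (\int \phi_i f)\,g_i + (\text{incoming transfers})$, exactly as in the telescoping argument of Proposition~\ref{prop:trivial_space}. The transfer functions are chosen smooth, time-independent in their spatial profile up to the moving support, and supported well inside $\Omega$ so that the interior operator handles them. This yields a decomposition $f = \sum_i \widetilde{f_i}$ with each $\widetilde{f_i}$ slice mean zero and supported either in a boundary graph patch or in the interior region.

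Next I would set $Bf = \sum_{i \ge 1} B_{P_i}\widetilde{f_i} + B_{\mathrm{int}}(\text{interior part})$, where $B_{P_i}$ is the boundary operator (acting after the rotation/translation straightening $P_i$ to the standard position) and $B_{\mathrm{int}}$ is the operator of Proposition~\ref{prop:interior_bogovski}. Linearity, $\dive Bf = f$, smoothness of $Bf$ on $\Omega$, and preservation of compact support all follow termwise: $\dive B_{P_i}\widetilde{f_i} = \widetilde{f_i}$ by Proposition~\ref{prop:bdry_bogovski}, $\dive$ of the interior part equals the interior summand by Proposition~\ref{prop:interior_bogovski} (here is where slice mean zero is used), and summing recovers $f$; the $\epsilon(f)$ cutoff in Proposition~\ref{prop:bdry_bogovski} and the compact support statement in Proposition~\ref{prop:interior_bogovski} give compact support of $Bf$ when $f$ is compactly supported. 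For the weighted $L^p$ bound \eqref{eqthm:lpweight}, I would apply the first estimate of Proposition~\ref{prop:bdry_bogovski} to each boundary term — noting that on $P_i \cap \Omega$ the local distance $d_i(t,x)$ to the graph piece is comparable to $\dist(x,\partial\Omega_t)$ up to constants because the boundary is covered by finitely many patches — and absorb the transfer and interior terms using that they are supported at positive distance from $\partial\Omega_t$, where all powers of $d$ are bounded above and below, so Proposition~\ref{prop:interior_bogovski}'s unweighted bounds suffice. The Sobolev estimate \eqref{eqthm:sobolev} combines the pointwise high-order bound in Proposition~\ref{prop:bdry_bogovski} with the reference theory: one writes $\nabla B_{P_i}\widetilde{f_i}$ and iterates, but the gain of one derivative genuinely comes only after the localized-pieces-to-cylinders step, i.e. the $\beta > 0$ case is where we may further subdivide each $\widetilde{f_i}$ into dyadic-in-$d$ layers, on each of which the graph is uniformly Lipschitz at the relevant scale, apply Proposition~\ref{prop:reference}, and sum with the weight $d^{\beta(b-1)+1}$ encoding the scaling deficit — this is the standard Whitney-type summation. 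The finite-difference variants \eqref{eqthm:lpcont}, \eqref{eqthm:sobcont} follow identically since $B_{P_i}$ commutes with $\Delta_h$ by Proposition~\ref{prop:bdry_bogovski} and the only non-commuting pieces are the partition of unity and the time-dependence of the patches' supports, producing the $o(1)$ error. For the time-derivative bound \eqref{eqthm:time}, I would differentiate $\partial_t^\lambda B_{P_i}\widetilde{f_i}$; since $B_{P_i}$ commutes with $\partial_t$ but the straightening map and $\widetilde{f_i}$ depend on $t$ through the $\alpha$-H\"older moving boundary, the Leibniz expansion produces lower-order-in-time terms weighted by inverse powers of $d^{1/(\alpha\beta)}$ — this is precisely the content of the sum $\sum_{\lambda'=0}^\lambda$ with weight $d^{\Theta b - (\lambda-\lambda')/(\alpha\beta)}$ on the right. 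The final density assertion follows because the whole construction lands in finite sums of images of operators each of which, by the cited propositions, maps into closures of $C_0^\infty(\Omega;\R^n)$ in the relevant weighted norms, so finiteness of the right-hand side forces $Bf$ into that closure.

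\textbf{Main obstacle.} The hard part will be the bookkeeping of the time dependence under differentiation: when $\partial_t$ hits the boundary-straightening diffeomorphism $\tau_i(t,\cdot)$ or the moving cutoffs, one loses regularity measured against the distance function, and one must show the loss is exactly a power $1/(\alpha\beta)$ of $d$ per time derivative and no worse — combining the $\alpha$-H\"older modulus of the boundary in time with the $\beta$-H\"older modulus in space into the compound exponent $\alpha\beta$. Quantifying this requires a careful chain-rule estimate for $\partial_t(f \circ \tau_i)$ and for $\partial_t d(t,x)$ in terms of $d(t,x)^{1-1/\alpha}$ (from $\alpha$-H\"older motion) and then converting the graph-distance $|x_n - \psi_i(t,x')|$ to $d$ via the $\beta$-H\"older comparison $d \le |x_n - \psi_i| \le C d^\beta$ already recorded in Proposition~\ref{prop:bdry_bogovski}. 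A secondary technical point is ensuring the transfer/flux correction is itself compatible with time differentiation and supported uniformly away from $\partial\Omega_t$ as $t$ varies, which uses that $\partial\Omega$ is covered by \emph{finitely many} patches with a fixed interior corridor connecting them; this is where the boundedness of $\Omega$ and the finiteness in Definition~\ref{def:domains} are essential.
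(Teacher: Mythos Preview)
Your construction essentially reproduces what the paper calls the \emph{first auxiliary operator} $B'$: boundary patches with the graph operator of Proposition~\ref{prop:bdry_bogovski}, an interior piece handled by Proposition~\ref{prop:interior_bogovski}, and a flux correction to restore slice mean zero. That operator is fine for the weighted $L^{p}$ estimate \eqref{eqthm:lpweight}, but it \emph{does not gain a spatial derivative}: the boundary operator $B_{P_i}$ only integrates in the $x_n$ direction and has no smoothing in the tangential variables, so \eqref{eqthm:sobolev} fails for your $B$ as written. Your fix --- ``further subdivide $\widetilde{f_i}$ into dyadic-in-$d$ layers and apply Proposition~\ref{prop:reference}'' --- has a gap: the reference Bogovskij on a Whitney cube needs the input to have mean zero on that cube, and nothing in your decomposition arranges this. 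You have mean zero only at the coarse level of finitely many patches, not at the level of the Whitney layers.

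The paper closes this gap with a genuinely two-stage construction. After building $B'$ (your operator), it forms a space-time Whitney cover $\{P_j\}$ of $\Omega$ with partition of unity $\chi_j$, and sets
\[
  f_j \;=\; \dive(\chi_j B'f), \qquad Bf \;=\; \sum_j B_{Q_j} f_j ,
\]
where $B_{Q_j}$ is the reference Bogovskij on the spatial base $Q_j$. The point is that $f_j$ is automatically mean zero on every slice (integration by parts), supported in $P_j^{*}$, and satisfies $\sum_j f_j = f$. The Sobolev gain then comes from $B_{Q_j}$ alone; $B'$ is used only as a decomposition device, and its own lack of smoothing is irrelevant. The weight $d^{\beta(b-1)+1}$ on the left of \eqref{eqthm:sobolev} arises because $f_j = \nabla\chi_j\cdot B'f + \chi_j f$ and $|\nabla\chi_j|\lesssim r_j^{-1}$, which is controlled via the weighted $L^{p}$ bound on $B'$.

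A second issue: there is no boundary-straightening diffeomorphism $\tau_i(t,\cdot)$ in the construction, and introducing one would be problematic since it would only be $C^{\alpha}$ in time and $C^{\beta}$ in space. The time derivative loss $d^{-1/(\alpha\beta)}$ does not come from differentiating a change of variables; it comes from $\partial_t\chi_j$ on the Whitney cylinders, where the time height $\delta(r_j)$ is chosen so that $|\partial_t\chi_j|\lesssim \delta(r_j)^{-1}\lesssim r_j^{-1/\alpha}$ by the $\alpha$-H\"older modulus in time, and the conversion $r_j\sim d$ then absorbs the extra $1/\beta$ through the weighted bound on $B'$.
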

\begin{proof}
  We divide the proof into five subsections.
  In Subsection~\ref{sec:auxop}, we construct the first auxiliary operator,
  which satisfies weighted $ L^{p} $ bounds
  but not much more.
  This follows by just gluing together the objects constructed so far.
  Then in Subsection~\ref{sec:whitney} we form an adapted space-time decomposition of the domain,
  which together with the auxiliary operator is used to decompose the input data into localized pieces.
  This is used in Subsection~\ref{sec:construction} to define the operator from Theorem~\ref{thm:bogovski_on_w1p} locally. This is followed by verifying the claimed bounds.

\subsection{The first auxiliary operator}
\label{sec:auxop}
By assumption on the domain, we can cover the boundary by finitely many rectangles $ R_i = I_i \times Q_i $.
Each rectangle comes with a normal vector $ \vec{n}_i $, which gives the direction of the graph coordinate, 
and this vector is normal to the hyperplane containing  $ Q_i $.
Without loss of generality, we may assume that 
the rectangles $ R_i + 10 \delta  \vec{n}_i$ would still cover the boundary for a suitably small $ \delta $.
Let 
\[
\eta_i \in C^{\infty}\left( \Omega \cap \bigcup_{j} R_j \right)
\] 
form a partition of unity subordinate to $ R_i $ in the boundary region $ \Omega \cap \bigcup_{i} R_i $,
and extend these functions by zero to functions defined in all of $\Omega$.
Let $ H_i $ be the half-space normal to $ \vec{n}_i $
with $ |\partial H_i \cap \partial R_i \cap \Omega| > 0 $
and $R_i \subset H_i$.
In other words, it contains the bottom of the rectangle $ R_i $.
Let $ \varphi_i $ be a space mollification of a piecewise affine function 
that is zero in $ \partial H_i - a \vec{n}_i $ for $a \le 0$,
one in $ H_i + 5 \delta \vec{n}_i $ 
and satisfies $ |\nabla \varphi_i| \le 1/ ( 5 \delta) $.
We also require $\partial_t \varphi_i = 0 $.
Finally, we notice that there is a domain $ \Xi $ as in Proposition \ref{prop:interior_bogovski}
such that 
\[
\Omega \setminus \bigcup_{i} R_i \subset \Xi \subset \{(t,x) \in \Omega: d(t,x) > 4 \delta \}.
\]

Let $ B_{int}$ be the operator relative to $ \Xi $ provided by Proposition \ref{prop:interior_bogovski}.
Let $ B_{ext,i}$ be the operator relative to $ R_i $ provided by Proposition \ref{prop:bdry_bogovski}.
Here we use the fact that the divergence is a translation and rotation invariant operator.
We define 
\begin{equation}
  \label{eq:bprime}
B' f = \sum_{i} \varphi_i B_{ext,i}(\eta_i f) + B_{int}\left(f- \sum_{i} \varphi_i \eta_i f - \sum_{i} \nabla \varphi_i \cdot B_{ext,i}(\eta_i f)  \right).
\end{equation}
To see that the second term is well-defined,
we have to check that the expression in the parenthesis is supported in $ \Xi $
and has mean value zero on each time slice.
The first property follows from the fact that $ \eta_i $ partition the unity in the union of $ R_i $
and that $ \varphi_i $ is identically one in $ \Xi^c \cap \supp \eta_i $.
Indeed, these facts imply 
\[
 f- \sum_{i} \varphi_i \eta_i f = 0, \quad \max_{i} | \nabla \varphi_i| = 0
\]
everywhere in $\Xi^{c} $.
To check the mean value,
we integrate by parts to see
\[
-\int \nabla \varphi_i \cdot B_{ext,i}(\eta_i f) \, dx
  =  \int \varphi_i \dive B_{ext,i}(\eta_if) \, dx
  =  \int \varphi_i \eta_i f \,dx .
\]
Hence the last two terms cancel
and we are left with the integral of $ f $. 
That one is zero by assumption.
Once we know that $ B' f$ is well-defined, 
it follows from the product rule and Propositions \ref{prop:bdry_bogovski} and \ref{prop:interior_bogovski}
that $ \dive B' f = f $.

Next we verify the relevant bounds for \eqref{eq:bprime}.
We first claim
\begin{equation}
\label{decops}
\int \left(  \frac{|B' f(t,x)|}{d(t,x)^{\beta}} \right) ^{p}d(t,x)^{\beta b} \, dx \le C \int |f(t,x)|^{p}d(t,x)^{b} \, dx
\end{equation} 
Consider first the interior term
\[
B_{int}\left(f- \sum_{i} \varphi_i \eta_i f - \sum_{i} \nabla \varphi_i \cdot B_{ext,i}(\eta_i f)  \right).
\] 
By the uniform lower bound on $d(t,x)$ 
in the support of the quantity in the display above,
we estimate the $L^{p}(d^{\beta(b-p)})$ norm  
by the unweighted $L^{p}$ norm.
Then we can apply Proposition \ref{prop:interior_bogovski}
to reduce the matter to estimating the unweighted $L^{p}$ norm of
\[
f_0 = f- \sum_{i} \varphi_i \eta_i f - \sum_{i} \nabla \varphi_i \cdot B_{ext,i}(\eta_i f) .  
\] 
The first two terms are trivially bounded by the desired quantity.
To estimate the third term, 
we notice that $d(t,x) \sim 1$ in the support of $\nabla \varphi_i$.
Hence this term reduces to the weighted bound on $B_{ext,i}(\eta_i f)$.  
To estimate the weighted $L^{p}$ norm of this,
we apply Proposition \ref{prop:interior_bogovski}. 
Hence \eqref{decops} has been proved.

The second claim is
\begin{equation}
\label{decopt}
    \int \left(  \frac{|\partial_t^{\lambda} B' f(t,x)|}{d(t,x)^{\beta}} \right) ^{p}d(t,x)^{\beta b} \, dx 
    \le C \sum_{\lambda'=0}^{\lambda}\int | \partial_t^{\lambda'} f(t,x)|^{p}d(t,x)^{b } \, dx.
\end{equation}
for $\lambda \ge 0$ an integer.
This is immediate for the $ B_{int} $ part in \eqref{eq:bprime}
by the arguments as above.
For the $ B_{ext} $ part, we note that $ B_{ext,i} $ commutes with the time derivative,
and the inequality \eqref{decopt} follows by applying Proposition \ref{prop:bdry_bogovski}
to $\partial_t^{\lambda}(\eta_i f) $.

The third bound to be verified is 
\begin{equation}
  \label{decopdif}
  \int \left(  \frac{|\Delta_h B' f(t,x) |}{d(t,x)} \right) ^{p}d(t,x)^{\beta b} \ddd   
  \le C  \int |\Delta_h f(t,x)|^{p}d(t,x)^{b } \ddd +o(1)    
\end{equation}
for $0<h< h_0(f)$ when $f$ is assumed to be compactly supported.
Because of the compact support
as well as the uniform continuity of the graphs enclosing $ \Omega $,
there is $ h_0 $ depending on the function $f$ such that if $ 0<h<h_0 $,
then $ d(t+h,x) \sim d(t,x) $ in $ \supp f$ and Proposition \ref{prop:bdry_bogovski}
can be applied to $ \Delta_h f$. 
Analogous bound for the $ B_{int} $ part in \eqref{eq:bprime} is immediate from Proposition \ref{prop:interior_bogovski}.
The distance weight is bounded away from zero by a constant in the support of that term.

\begin{remark}[Dual of $B'$]
\label{rmk:dualbprime}
For future reference,
we also note that 
we can write down formulas for operators $B_{int,\lambda'}^{*}$ with 
\[
\la{\partial_t^{\lambda} B_{int} f_0 ,  g } = \sum_{\lambda' = 0}^{\lambda} \binom{\lambda}{\lambda'}  \la{\partial_t^{\lambda - \lambda'}  f_0 , B_{int,\lambda'}^{*}g } .
\]
Let $\zeta$ be a smooth and positive bump that is compactly supported in $\Omega$
and equal to one in the support of $f_0$.
We can choose $\zeta$ such that $d(t,x) \sim 1$ in its support, 
and then
\begin{equation}
\label{decopsadjoint}
\int  |\partial^{\gamma} (\zeta B_{int,\lambda'}^{*}g)|^{p}d(t,x)^{b} \, dx \le C \sum_{|\gamma'|< |\gamma|} \int | \partial^{\gamma'} g(t,x)|^{p}  \, dx
\end{equation} 
for all $b \in \R$ by Proposition \ref{prop:interior_bogovski}.
This inequality is not needed for the current proof, 
but we will refer back to this from the proof of Theorem \ref{thm:negat}.
\end{remark}

\subsection{Decomposition of the domain}
\label{sec:whitney}
We form a decomposition of Whitney type.
By uniform continuity assumption, 
there is $ \delta: (0,1) \to (0,1)$ such that if $ |t-s| < \delta(\epsilon) $,
then $|d(t,x) - d(s,x)|< \epsilon/M $,
where $ d(t,x) $ is the distance to nearest point in $\partial \Omega $ with the same time coordinate $t$
and $M$ is a large dimensional constant that can be specified later.

Consider first all time slices $ \Omega_t $ of $ \Omega $.
For each $n$-dimensional slice domain $ \Omega_t $, 
form its Whitney decomposition by disjoint dyadic cubes $ Q^{t} $.
They are the dyadic cubes such that 
their concentric dilates $ M' Q^{t} $ are contained in $ \Omega_t $ 
but their $4M'$ dilates are not.
Here we can freely choose $1 < M' < M$.
The Whitney cubes form a partition of $\Omega_t$.
Given such a cube $ Q^{t} $, let $ r(Q^{t}) $ be its side length.
Set a height $ h(Q^{t}) = \delta( r(Q^{t})) $,
and extend $ Q^{t} $ to a cylinder 
\[
P^{t} = [t-h(Q^{t}),t+h(Q^{t})] \times Q^{t}.
\]
The family of Whitney cubes $\{Q_{j}^{t}\}$
gives rise to a family of cylinders $P_{j,t}$,
which covers the slice $ \Omega_t $,
now regarded as a subset of the space-time.

The full family $ \{P_{j,t}\}_{j \in \N,t \in \R} $ covers the whole domain $ \Omega $.
Fix a spatial side length $2^{k}$.
Fix a dyadic cube $Q \subset \rn$ with $r(Q) = 2^{k}$.
Consider the family of all $(j,t)$
such that there is an interval $I_{j,t}$ so that $I_{j,t} \times Q = P_{j,t}$ 
for a cylinder as above.
By definition, all $I_{j,t}$ are equally long.
If the interval covered by $ I_{j,t} $ is open,
we add additional intervals to contain the endpoints in their closures.
Keep a minimal subfamily of $ I_{j,t} $ 
that still covers the union of the intervals in the original family
and discard the rest.
Such a minimal subfamily has overlap bounded by two.
Performing the reduction for each dyadic cube of each side length,
we obtain a reduced family of space-time cylinders $\{ P_j \}$.

Consider further the reduced subfamily.
Let $(t,x) \in P_j$.
Let $ r_j $ be the side length of $ P_j $.
Then 
\[
  |d(t,x) - d(s,x)|  \le 2r_j / M
\]
for all $  s  $ with $ (s,x) \in P_j $ and consequently by the triangle inequality
\[
\left(M' - 1 - 2/M\right)r_j \le d(s,x) \le \left(M' + 1 + 2/M \right)r_j
\]
for all $(s,x) \in P_{j}$.
If $P_{j'}$ is another cylinder from the reduced family such that $P_j \cap P_{j'} \ne \varnothing$,
then $r_j \sim r_{j'}$.
We conclude that the family of reduced cylinders has bounded overlap,
and the same goes for the family of concentric dilates by factor less than $M'$.
We denote $ P_j^{*} = M' P_j $.
We denote the spatial bases of $ P_j^{*} $ by $ Q_j^{*} $.
The largest possible dilation that still allows for this can be changed by increasing the parameters $ M $ and $M'$ 
in the construction.

  \subsection{Construction of the operator}
  \label{sec:construction}
  Let $ \chi_j $ be a smooth partition of unity subordinate to $P_j^{*}$.
  Without loss of generality,
  we can assume the partition to satisfy
  \[
    |\nabla \chi_j(t,x)| \lesssim r_j,  
    \quad   |\partial_t \chi_j(t,x)| \lesssim \delta(r_j)
  \]
  and $ \supp \chi_j \subset P_j^{*} $.
  We let 
  \[
   f_j = \dive ( \chi_j B' f ) .
  \]
  Then
  \[
  \sum_j f_j = f, \quad \sup_{t} \left \lvert \int f_j(t,x)\, dx \right \rvert = 0,
\quad \supp f_j \subset P_j^{*}
  \]
  and we define 
  \[
  Bf = \sum_{j} B_{Q_j} f_j
  \]
  where we use the simple reference Bogovskij integral \eqref{eq:ref_bogo} acting on the space variables only.

  \subsection*{Sobolev regularity in space}
  The Sobolev bounds for \eqref{eq:ref_bogo} are known, 
  see Proposition \ref{prop:reference}.
  Fix $ t $.
  We have 
\begin{equation*}
  \int | \nabla B_{Q_j}f_j(t,x)|^{p} \, dx
  \le C \int |f_j(t,x)|^{p} \, dx 
\end{equation*}
so that by $ 1_{P_j^{*}}(t,x) d(t,x) \sim  1_{P_j^{*}}(t,x)  r_j $
\[
  \int  d(t,x)^{(1-\beta)p + b\beta} |\nabla Bf(t,x)|^{p} \, dx
  \le C \sum_{j} r_j^{(1-\beta)p + b\beta} \int |f_j(t,x)|^{p} \, dx .
\]
Here 
\[
f_j =   \nabla \chi_j \cdot B' f + \chi_j f 
\]
with $ |\nabla \chi_j| \le C r_j^{-1} $,
and the support of $ B_{Q_j} f_j $
is contained in the convex hull of the union of $ \supp f_j $ and $ P_j $.
Hence
  \begin{multline*}
    \sum_{j} r_j^{(1-\beta)p + b\beta} \int |f_j(t,x)|^{p} \, dx \\
    \le C \sum_{j} r_j^{(1-\beta)p + b\beta} \left(  r_j^{-p} \int_{Q_j^*} |B' f(t,x)|^{p} \, dx  + \int \chi_j(t,x) |f(t,x)|^{p}\, dx  \right) \\
    \le C \int \left [ \left(  \frac{|B' f(t,x)|}{d(t,x)^{\beta}} \right) ^{p} +  |f(t,x)|^{p} \right]d(t,x)^{b\beta} \, dx \\
     \le \int |f(t,x)|^{p} d(t,x)^{b} \, dx
  \end{multline*}
  where we used \eqref{decops} and the fact that $ \Omega $ is bounded.
  This proves \eqref{eqthm:sobolev}.

To prove \eqref{eqthm:sobcont},
we note as before
\begin{equation*}
  \int | \nabla B_{Q_j}\Delta_h f_j(t,x)|^{p} \, dx
  \le C \int |\Delta_h f_j(t,x)|^{p} \, dx 
    = \I + \II + \III,    
\end{equation*}
where
\begin{align*}
  \I &=  C \int |\nabla \chi_j(t+h,x) -\nabla \chi_j(t,x)|^{p}|B' f(t+h,x)|^{p}\, dx \\
  \II &= C \int |\nabla \chi_j(t,x)|^{p}|B' f(t+h,x)- B' f(t,x)|^{p} \, dx \\
  \III &= C \int |\Delta_h(\chi_jf)(x,t) |^{p} \, dx.
\end{align*}
Using 
\begin{equation*}
  |\nabla \chi_j(t+h,x) -\nabla \chi_j(t,x)| \le C h \delta(r_j)^{-1}r_j^{-1}
\end{equation*}
and \eqref{decops}
for $ \I $
and \eqref{decopdif} for $ \II $,
we conclude \eqref{eqthm:sobcont}.
Note that as we prove an asymptotic estimate when $ h \to 0 $
and $ f $ is compactly supported so that $ \delta(r_j) $ has uniform lower bound,
we can assume $ h \delta(r_j)^{-1} < h^{1/2} $.

\subsection*{Bounds without space derivatives}
The remaining inequalities follow by the same reasoning.
Indeed, because $ B_{Q_j}f_j $ has compact support in $ Q_{j}^{*} $, 
by Poincar\'e's inequality for functions with zero boundary values in space variable
\[
  \int_{Q_j^{*}} |\partial_t^{\lambda} B_{Q_j}f_j(t,x)|^{p} \, dx 
  \le C  \int_{Q_j^{*}}  d(t,x)^{p} |\partial_t^{\lambda} f_j(t,x)| ^{p} \, dx 
\]
where the Whitney property of $ P_j^{*} $ gives $r_j \sim d(t,x) $
and $\lambda \ge 0$ is any integer.

We write 
\[
\partial_t^{\lambda} f_j = 
  \sum_{\lambda' = 0}^{\lambda} \binom{\lambda}{\lambda'} ( \nabla\partial_t^{\lambda'} \eta_j \cdot\partial_t^{\lambda - \lambda'} B' f +\partial_t^{\lambda'} \eta_j \cdot\partial_t^{\lambda - \lambda'} f  )
\]
to obtain the bound 
\[
|\partial_t^{\lambda} f_j| \le C \sum_{\lambda'= 0}^{\lambda} \delta(r_j)^{- \lambda'} (r_{j}^{-1}\partial_t^{\lambda - \lambda'} B' f +\partial_t^{\lambda - \lambda'}f ).
\]
Notice that if $\lambda = 0$, 
there is no factor $\delta(r_j)$
and if $\lambda > 0$,
then we are assuming $\alpha > 0$ so that $\delta(r_j) \le C r_{j}^{1/\alpha}$.
We find
\begin{align*}
&\int_{Q_j^{*}}d(t,x)^{\beta (b-1)p} |\partial_t^{\lambda} B_{Q_j}f_j(t,x)|^{p} \, dx 
\sim r_j^{\beta(b-1)p}\int_{Q_j^{*}} |\partial_t^{\lambda} B_{Q_j}f_j(t,x)|^{p} \, dx 
\\
&\quad \leq C \sum_{\lambda'= 0}^{\lambda}\int_{Q_j^{*}} d(t,x)^{\beta(b-1)p+\frac{(\lambda'-\lambda)p}{\alpha}}(|\partial_t^{\lambda'} B' f| +d(t,x)|\partial_t^{\lambda'}f|)^p\, dx.
\end{align*}
Hence
we can sum up the $Q_j^*$ use there finite overlap and use \eqref{decopt}
to conclude \eqref{eqthm:lpweight} and \eqref{eqthm:time}.
Similarly, the asymptotic estimate \eqref{eqthm:lpcont} follows 
by using the estimates for $ \I $, $ \II $ and $ \III $.
\end{proof}

\begin{remark}
\label{remarks-after-proof}
There are multiple fine details that could be improved further.
However, 
for the sake of transparent presentation,
we have attempted to keep the formal statement of the theorem simple.
We comment a few of such details below.
\begin{itemize}
  \item The H\"older exponent $\beta$ only affects the estimates locally.
  Suppose that the boundary of $\Omega$ is covered by rectangles $R_i$
  so that $\partial \Omega \cap R_i$ is a graph of a $\beta_i$ H\"older continuous function
  as in the proof. 
  Then we can replace the constant exponent $\beta$ in all the estimates 
  by a variable exponent $\beta : \Omega \to [0,1]$
  defined as 
  \[
  \beta(t,x) = \min_{(t,x) \in R_i} \beta_i
  \]
  if the right hand side is well-defined and as $\beta(t,x) =1$ otherwise.
  In particular, if the domain is smooth except for a single cusp,
  then it is possible to construct $B$ such that estimates for a smooth domain hold except for an arbitrarily small neighborhood of the singularity of the boundary.
  \item The same observation applies to the time regularity parameter $\alpha$.
  Even if $\alpha = 0$, it is possible to obtain estimates for the time derivatives in the interior of the domain. The proof shows that the weight $d(t,x)^{-1/\alpha}$ does not become infinity but depends on the modulus of continuity of the boundary graph.
  \item The constants $C$ in the estimates can be written as $C_1 + C_2$, 
where $C_1$ depends on the domain only through a linear dependency on
\[
\sum_{i=0}^{A} \no{\psi_i}_{\textrm{H\"older}}
\]
where $A$ is the number of boundary patches from Definition \ref{def:domains}
and $\psi_i$ are the corresponding graph functions, 
and through the numerical parameters $\alpha$ and $\beta$.
The constant $C_2$ only depends on the smooth interior domain away from the boundary patches.
This dependency is not the focus of the present paper (see Proposition \ref{prop:trivial_space} and the preceding discussion).
  \item These remarks also apply to Theorem \ref{thm:high}.
\end{itemize}
\end{remark}

The weights on the left hand side quantify a possible boundary blow-up of the gradient.
We can also reformulate it in terms of unweighted $ L^{p} $ scale.
We clarify the interdependency of the weight and 
the integrability gap by recalling the following two propositions.

\begin{proposition}
  \label{prop:tounweighted}
  Assume that $ S \subset \R ^{n} $ is a bounded domain
  such that for some $C_{S} \ge 0$, 
  some $\theta \in (0,1]$ 
  and all $\varepsilon > 0$
  \[
  |\{ x \in \Omega: d(x) < \varepsilon  \}| \le C_S \varepsilon^{\theta}.
  \]
  Let ${{{{\eta}}}} > 0$.
  Then for all test functions $ f $
  \[
 \|f \|_{q} \le C\|f  d ^{{{{{\eta}}}}}\|_p,
  \]
  whenever
  \[
  \frac{{{{{\eta}}}}}{\theta} < \frac{1 }{q } - \frac{1}{p}.
  \]
  The constant $ C $ is independent of $ f $.
\end{proposition}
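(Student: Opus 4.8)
The plan is to dominate $f$ by the product of the weighted quantity $fd^{\beth}$ and the fixed weight $d^{-\beth}$, and then to reduce the whole statement to the integrability of a negative power of the distance function over $S$, the integrability threshold being exactly the hypothesis. First note that since $\beth>0$ and $\theta>0$, the assumption $\beth/\theta<1/q-1/p$ forces $1/q-1/p>0$, hence $q<p$; I will set $r=qp/(p-q)$, so that $1/r=1/q-1/p>0$.

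The first step is a single application of H\"older's inequality. Writing $|f|^{q}=(|f|\,d^{\beth})^{q}\,d^{-\beth q}$ and using the conjugate exponents $p/q$ and $p/(p-q)$ (both $\ge 1$ since $q<p$), I obtain
\[
\no{f}_{q}\le \no{fd^{\beth}}_{p}\,\nos{d^{-\beth}}_{L^{r}(S)} .
\]
Thus the claim follows once I verify that $d^{-\beth}\in L^{r}(S)$, i.e.\ $\int_{S}d(x)^{-\beth r}\,dx<\infty$, the constant then being $C=\nos{d^{-\beth}}_{L^{r}(S)}$, which depends only on $S$, $\beth$, $p$ and $q$. By the choice of $r$, the exponent satisfies $\beth r<\theta$ if and only if $\beth/\theta<1/q-1/p$, so the hypothesis is precisely the integrability condition needed.

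To establish this integrability I will use a dyadic layer-cake decomposition adapted to the distance to the boundary. Let $D=\sup_{x\in S}d(x)<\infty$, which is finite since $S$ is bounded, and for $k\ge 0$ put $A_{k}=\{x\in S:2^{-k-1}D<d(x)\le 2^{-k}D\}$. These sets are pairwise disjoint and cover $S$, as $d>0$ on $S$. On $A_{k}$ one has $d(x)^{-\beth r}\lesssim (2^{-k}D)^{-\beth r}$, while $A_{k}\subset\{x\in S:d(x)<2^{-k+1}D\}$, so the thinness assumption gives $|A_{k}|\le C_{S}(2^{-k+1}D)^{\theta}$. Hence $\int_{A_{k}}d(x)^{-\beth r}\,dx\lesssim 2^{-k(\theta-\beth r)}$ with a constant depending only on $C_{S},D,\theta,\beth,r$, and summing this geometric series over $k\ge 0$ converges precisely because $\theta-\beth r>0$. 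This yields $\int_{S}d(x)^{-\beth r}\,dx=\sum_{k\ge 0}\int_{A_{k}}d(x)^{-\beth r}\,dx<\infty$, which finishes the argument.

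I do not expect a genuine obstacle here; the only point requiring care is the exponent bookkeeping that matches the convergence threshold $\beth r<\theta$ of the geometric series with the hypothesis $\beth/\theta<1/q-1/p$ via the identity $1/r=1/q-1/p$, together with the observation that this hypothesis automatically enforces $q<p$, so that $r\in(0,\infty)$ and the H\"older step is legitimate even when $q\le 1$.
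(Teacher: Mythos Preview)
Your proof is correct and follows essentially the same route as the paper: a single H\"older inequality reduces the claim to the integrability of $d^{-\beth r}$ over $S$ with $r=pq/(p-q)$, and this is verified by a dyadic decomposition in the value of $d$ together with the thinness assumption, yielding a convergent geometric series exactly when $\beth r<\theta$. The only cosmetic differences are your explicit normalization by $D=\sup_{S}d$ and the observation that $q<p$ is forced by the hypothesis.
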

\begin{proof}
  By H\"older's inequality
  \begin{equation*}
   \int |f(x)|^{q}  \, dx \le 
   \left( \int |f(x)|^{p} d(x)^{p{{{{\eta}}}}}  \, dx \right)^{q/p} 
      \left( \int d(x)^{-{{{{\eta}}}} p q/(p-q) }  \, dx \right)^{1-q/p}.
  \end{equation*}
  By the assumption,
  for all $m \ge 0$
  \[
  |\{(x) \in S: 2^{-m-1} \le d(x) < 2^{-m}  \}| \le C 2^{-m \theta },
  \]
  and we see that 
  \[
  \int d(x)^{ - {{{{\eta}}}}  p q/(p-q)} \, dx
  \le C \sum_{m=1}^{\infty} 2^{-m\theta} \cdot 2^{m {{{{\eta}}}} p q/(p-q)}
  \]
  converges whenever 
  \[
  \frac{{{{{\eta}}}}}{\theta} < \frac{1}{q} - \frac{1}{p}.
  \]
\end{proof}

\begin{proposition}
  \label{prop:examples}
  Let $\Omega_t$ be a bounded $\beta$-H\"older domain.
  Define $\theta$ as in Proposition \ref{prop:tounweighted}.
  \begin{itemize}
    \item It holds $\theta \ge \beta$.
    \item If $\Omega_t$ is Lipschitz except for finitely many power type cusps,
    then $\theta = 1$.
    \item If $\partial \Omega_t$ is $n-1$ rectifiable and $\mathcal{H}^{n-1}(\partial \Omega_t) < \infty$, then $\theta = 1$.
  \end{itemize}
\end{proposition}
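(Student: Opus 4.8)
The plan is to handle the three bullets separately, reducing the second to the third, and in every case to estimate the $\varepsilon$-collar $U_\varepsilon := \{x \in \Omega_t : \dist(x,\partial\Omega_t) < \varepsilon\}$ only for small $\varepsilon > 0$; for $\varepsilon$ bounded below the required bound $|U_\varepsilon| \le C\varepsilon^{\theta}$ is automatic from $|U_\varepsilon| \le |\Omega_t| < \infty$. For the first bullet I would cover $\partial\Omega_t$ by the finitely many graph rectangles $R_i = Q_i \times I_i$ provided by Definition \ref{def:domains}, over which $\partial\Omega_t$ is the graph of a $\beta$-H\"older function $\psi_i$ (enlarging the $R_i$ slightly so that the nearest boundary point of any $x$ close enough to $\partial\Omega_t$ lies in some $R_i$). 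The two-sided bound $\dist(x,\partial\Omega_t) \le |x_n - \psi_i(x')| \le C_i \dist(x,\partial\Omega_t)^{\beta}$, already recorded inside the proof of Proposition \ref{prop:bdry_bogovski}, then shows that $U_\varepsilon \cap R_i$ is contained in the thin slab $\{|x_n - \psi_i(x')| \le C_i \varepsilon^{\beta}\}$, whose measure is $\lesssim |Q_i| \varepsilon^{\beta}$ by Fubini. Summing over $i$ gives $|U_\varepsilon| \lesssim \varepsilon^{\beta}$, i.e.\ $\beta$ is an admissible exponent, which is precisely $\theta \ge \beta$.

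For the third bullet I would appeal to geometric measure theory. Since $\Omega_t$ is a bounded open set, $\partial\Omega_t$ is compact, and by assumption it is $(n-1)$-rectifiable with $\mathcal{H}^{n-1}(\partial\Omega_t) < \infty$. The classical theorem identifying the Minkowski content of a closed rectifiable set with its Hausdorff measure (Federer, \emph{Geometric Measure Theory}, 3.2.39) then gives that the upper $(n-1)$-dimensional Minkowski content of $\partial\Omega_t$ is finite, which unwound is exactly $|U_\varepsilon| \le |\{x : \dist(x,\partial\Omega_t) < \varepsilon\}| \le C\varepsilon$ for small $\varepsilon$. Hence $\theta = 1$ is admissible; it is also optimal because $\mathcal{H}^{n-1}$-a.e.\ point of $\partial\Omega_t$ carries an approximate tangent plane, near which the collar already has measure $\gtrsim \varepsilon$. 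I would emphasise that the hypothesis that $\partial\Omega_t$ be \emph{closed} is essential: a plain covering of a rectifiable set by Lipschitz images does not control $|U_\varepsilon|$, since the exceptional $\mathcal{H}^{n-1}$-null set is invisible to Hausdorff measure yet may have a fat neighbourhood.

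The second bullet I would then deduce from the third. A domain that is Lipschitz apart from finitely many power-type cusps has a boundary which is a finite union of Lipschitz graphs and graphs of power maps $x' \mapsto c|x'-a|^{s}$; Lipschitz graphs are $(n-1)$-rectifiable with finite $\mathcal{H}^{n-1}$-measure, while a power-cusp graph has finite surface measure $\int \sqrt{1 + c^{2}s^{2}|x'-a|^{2(s-1)}}\,dx' < \infty$ in every dimension $n \ge 2$ and is $(n-1)$-rectifiable, being a $C^{1}$ submanifold away from the single point $a$. Thus $\partial\Omega_t$ itself is $(n-1)$-rectifiable of finite $\mathcal{H}^{n-1}$-measure and the third bullet applies, yielding $\theta = 1$. (One could also argue directly, splitting $U_\varepsilon$ into the $\varepsilon$-neighbourhood of the Lipschitz part, of measure $O(\varepsilon)$, plus finitely many cusp neighbourhoods each of measure $O(\varepsilon)$ by an explicit computation in graph coordinates.) The main obstacle in the whole argument is the third bullet: everything there rests on the Minkowski-content theorem, whose closedness and rectifiability hypotheses cannot be dropped, and one must interpret ``power-type cusp'' narrowly enough that the corresponding boundary piece genuinely satisfies those hypotheses.
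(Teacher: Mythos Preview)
Your proof is correct and follows essentially the same approach as the paper. For the first bullet the paper phrases the argument as a box-counting estimate (covering the $\varepsilon$-neighbourhood of the graph by roughly $\varepsilon^{-(n-1)+(\beta-1)}$ boxes of diameter $\varepsilon$), whereas you go through Fubini on the vertical slab $\{|x_n-\psi_i(x')|\le C\varepsilon^\beta\}$; these are equivalent reformulations of the same H\"older oscillation bound. For the third bullet both proofs invoke Federer 3.2.39 to equate Minkowski content with Hausdorff measure, and both reduce the second bullet to the third; your explicit verification that power-cusp graphs are rectifiable with finite $\mathcal{H}^{n-1}$ fills in a detail the paper leaves implicit.
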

\begin{proof}
  Given a graph of a general $\beta$-H\"older continuous function,
we can cover its $\varepsilon$ neighborhood by 
$ \varepsilon^{-(n-1) + (\beta-1) }$ pieces of $n$ dimensional boxes with diameter 
roughly $\varepsilon$.
Hence $\theta \ge \beta$ for general H\"older domains. 
The second item is a special case of the third item.
We prove the third item.
Rectifiability implies that the $n-1$ dimensional Minkowski content of $\partial \Omega_t$
equals its Hausdorff measure,
which we assume to be finite, see 3.2.39 in \cite{MR0257325}.
By the definition of the Minkowski content
and the fact that $\Omega_t$ is bounded,
we conclude there is $C$ such that for all $\varepsilon > 0$
\[
  |\{ x \in \Omega_t: d(t,x) < \varepsilon  \}| \le C \varepsilon.
\]
This concludes the proof.
\end{proof}

Using the propositions,
we can embed the target spaces in Theorem \ref{thm:bogovski_on_w1p}
into unweighted spaces.

\begin{corollary}
  \label{cor:rectifiable}
  Let $\Omega$ be a $C^{0,\beta}$ domain and $\theta$, $p$ and $q$ as above. 
  Then 
  \begin{align*}
   \|\nabla Bf(t,\cdot)\|_q &\le C \|f(t,\cdot)\|_p   
  \end{align*}
  for $(1-\beta)/\theta < 1/q - 1/p$.
  In particular,
  the $\theta = 1$ when $\partial \Omega_t$ is rectifiable with finite area 
  and $\theta \ge \beta$ in the generic case.
\end{corollary}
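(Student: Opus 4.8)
The plan is to read the weighted bound off from Theorem~\ref{thm:bogovski_on_w1p} by specializing the weight exponent, and then to trade the leftover boundary weight on the left-hand side for a loss of spatial integrability via Proposition~\ref{prop:tounweighted}.

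First I would apply \eqref{eqthm:sobolev} with $b = 0$. Since $b \le 0$ we are in the case $\Theta = 1$, so that $\Theta b = 0$, while the weight on the left collapses to $d(t,\cdot)^{\beta(0-1)+1} = d(t,\cdot)^{1-\beta}$. Hence, for every $t$ and every (compactly supported, or by the last sentence of that part of Theorem~\ref{thm:bogovski_on_w1p} also general) test function $f$,
\[
  \| d(t,\cdot)^{1-\beta} \nabla B f(t,\cdot) \|_p \le C \| f(t,\cdot) \|_p ,
\]
with $C$ independent of $t$ and $f$, since the weighted estimates of Theorem~\ref{thm:bogovski_on_w1p} are uniform in $t$.

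Next I would remove the weight $d(t,\cdot)^{1-\beta}$ at the cost of passing to a smaller exponent $q$. The geometric input is that the time slice $\Omega_t$ satisfies $|\{x \in \Omega_t : d(t,x) < \varepsilon\}| \le C \varepsilon^{\theta}$ uniformly in $t$, which is precisely the defining property of a $C^{0,\beta,\theta}$ domain, with admissible $\theta \ge \beta$. Applying Proposition~\ref{prop:tounweighted} to each component of $\nabla B f(t,\cdot)$ with $\beth = 1-\beta$ and $S = \Omega_t$ gives
\[
  \| \nabla B f(t,\cdot) \|_q \le C \| d(t,\cdot)^{1-\beta} \nabla B f(t,\cdot) \|_p
\]
whenever $(1-\beta)/\theta < 1/q - 1/p$, again with $C$ uniform in $t$. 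Chaining the two displays yields $\|\nabla B f(t,\cdot)\|_q \le C\|f(t,\cdot)\|_p$, which is the claimed estimate. The ``in particular'' assertions then follow from Proposition~\ref{prop:examples}: one always has $\theta \ge \beta$, and when $\partial\Omega_t$ is $(n-1)$-rectifiable with $\mathcal H^{n-1}(\partial\Omega_t) < \infty$ one may take $\theta = 1$.

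There is no substantial obstacle — this is a genuine corollary of the preceding results. The only point requiring a modicum of care is the degenerate case $\beta = 1$ (Lipschitz in space), where $\beth = 1-\beta = 0$ and Proposition~\ref{prop:tounweighted} does not apply verbatim; there one simply uses \eqref{eqthm:sobolev} with $b = 0$ to get the unweighted bound $\|\nabla Bf(t,\cdot)\|_p \le C\|f(t,\cdot)\|_p$ and then the elementary inclusion $L^p(\Omega_t)\hookrightarrow L^q(\Omega_t)$ on the bounded slice $\Omega_t$ for $q < p$. Throughout, one should only keep in mind that all constants generated are independent of $t$, which is automatic because both the estimates of Theorem~\ref{thm:bogovski_on_w1p} and the measure bound \eqref{eq:defthetaintro} are uniform in $t$.
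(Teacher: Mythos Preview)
Your proof is correct and follows exactly the approach the paper indicates: applying \eqref{eqthm:sobolev} with $b=0$, then Proposition~\ref{prop:tounweighted} with $\beth=1-\beta$, and finally Proposition~\ref{prop:examples} for the particular cases. The paper's own proof is merely the one-line citation of these three results, so you have in fact spelled out precisely what was intended.
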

\begin{proof}
  Theorem \ref{thm:bogovski_on_w1p}, Proposition \ref{prop:examples} and Proposition \ref{prop:tounweighted}.
\end{proof}

\section{Regularity of high order}
Next we prove estimates involving higher order Sobolev norms.
We separate these estimates from Theorem \ref{thm:bogovski_on_w1p} for better readability,
although they follow by essentially the same arguments.

\begin{theorem}
  \label{thm:high}
Let $\alpha \in [0,1]$ and $ \beta \in (0,1] $. 
Let $ \Omega \subset \R ^{1+n} $ be a $ C^{\alpha,\beta,\theta}$ 
and consider the operator $B$
from Theorem \ref{thm:bogovski_on_w1p}.
Then for all $ p > 1 $, $ k \ge 1 $, $ \kappa \in \{0,1\} $ and $f$ compactly supported
\begin{multline}
  \label{eqthm:highso}
  \left(\sum_{\substack{|\gamma| = k+1 \\ \gamma \in \{0\} \times \N^{n}  } } \int |\partial^{\gamma} \Delta_h^{\kappa} B f(t,x)|^{p} d(t,x)^{(1-\beta)p} \, dx\right)^{\frac{1}{p}}
   \\ \le C  \left( \sum_{\substack{|\gamma| \le k  \\ \gamma \in \{0\} \times \N^{n}  }} \int |\partial^{\gamma} \Delta_h^{\kappa} f(t,x)|^{p}d(t,x)^{ \frac{(|\gamma|-k)p}{\beta}   } \, dx\right)^{\frac{1}{p}}+ \kappa o(1)
\end{multline}
uniformly in $ t $ and asymptotically as $ h \to 0 $.
If $f$ is not compactly supported, 
the estimate still holds with $\kappa = 0$.

Further, if $\alpha > 0$,
we have the time regularity bound
\begin{multline}
  \label{eqthm:highsot}
  \left(\sum_{\substack{|\gamma| = k+1 \\ \gamma \in \{0\} \times \N^{n}  } } 
  \int |\partial^{\gamma}\partial_t^{\kappa} B f(t,x)|^{p} d(t,x)^{(1-\beta)p} \,dx \right)^{\frac{1}{p}}
   \\ \le C  \left(\sum_{\lambda=0}^{\kappa} \sum_{\substack{|\gamma| \le k  \\ \gamma \in \{0\} \times \N^{n}  }} \int |\partial^{\gamma}\partial_t^{\lambda} f(t,x)|^{p}d(t,x)^{ \frac{(|\gamma|-k)p}{\beta}- \frac{(\kappa - \lambda)p}{\alpha \beta}} \, dx \right)^{\frac{1}{p}}
\end{multline}
valid for all integers $ \kappa \ge 0 $, 
uniformly in $t$.
The constant $ C $  depends on the domain, $ n $, $ p $, $ \kappa $ and $ k $.
The norms on the left hand sides of \eqref{eqthm:highso} 
and \eqref{eqthm:highsot}
can be replaced by an unweighted $ \dot{W}^{k+1,q} $ norm
under the conditions as stated in Proposition \ref{prop:tounweighted}.
\end{theorem}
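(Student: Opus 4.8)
The plan is to run the argument of Theorem~\ref{thm:bogovski_on_w1p} once more, upgrading every first-order ingredient to order $k+1$. I would keep the auxiliary operator $B'$ of \eqref{eq:bprime}, the Whitney cylinders $\{P_j\}$ of Subsection~\ref{sec:whitney}, the cutoffs $\chi_j$ subordinate to the dilates $P_j^{*}$, and the splitting $Bf=\sum_j B_{Q_j}f_j$ with $f_j=\nabla\chi_j\cdot B'f+\chi_j f$, exactly as there. The two new ingredients would be the full Sobolev mapping property of the reference operator (Proposition~\ref{prop:reference}), used in its scale-covariant form
\[
\no{\nabla^{k+1}B_{Q_j}g}_{L^{p}(Q_j^{*})}\le C\sum_{l=0}^{k}r_j^{\,l-k}\,\no{\nabla^{l}g}_{L^{p}(Q_j^{*})},
\]
and higher-order weighted bounds for $B'$, which I would establish first.

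\emph{Step 1: higher-order bounds for $B'$.} For every integer $a\ge0$ I would prove the order-$a$ refinements of \eqref{decops}, \eqref{decopdif}, \eqref{decopt}; in the purely spatial case this takes the form
\[
\noB{d(t,\cdot)^{\,a-k-\beta}\sum_{|\gamma|=a}|\partial^{\gamma}B'f(t,\cdot)|}_{p}\le C\sum_{l=0}^{a}\no{d(t,\cdot)^{\,(l-k)/\beta}\,\nabla^{l}f(t,\cdot)}_{p},
\]
together with the $\Delta_h$-version (with trailing $o(1)$) and the $\partial_t^{\lambda}$-version (weight shifted by the factor $d^{-(\lambda-\lambda')/(\alpha\beta)}$, as in the derivation of \eqref{eqthm:time}). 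This would follow by differentiating \eqref{eq:bprime}: on the interior piece one uses that $d\sim1$ on its support together with the Sobolev bound of Proposition~\ref{prop:interior_bogovski}; on each boundary piece $\varphi_iB_{ext,i}(\eta_i f)$ one applies the Leibniz rule (derivatives of the fixed functions $\varphi_i,\eta_i$ being bounded), then the pointwise commutation estimate $\sum_{|\gamma|=a}|\partial^{\gamma}B_{ext,i}g|\le\sum_{|\gamma|=a}|B_{ext,i}\partial^{\gamma}g|+\sum_{|\gamma|=a-1}|\partial^{\gamma}g|$ of Proposition~\ref{prop:bdry_bogovski}, and finally the weighted $L^{p}$ bound of that proposition to the leading term (the relevant parameter being $b=(a-k)/\beta\le0$, so $\Theta=1$). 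The lower-order term carries one derivative less, and a short computation with $\beta\in(0,1]$ and $0<d\lesssim1$ gives $d^{\,a-k-\beta}\le Cd^{\,(l-k)/\beta}$ for $l\le a-1$, so it is absorbed into the right-hand side with no iteration.

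\emph{Step 2: assembly.} I would fix $t$ and localize on each $P_j^{*}$, where $d\sim r_j$, insert the scale-covariant bound for $B_{Q_j}$, expand $\nabla^{l}f_j$ by the Leibniz rule using $|\nabla^{m}\chi_j|\lesssim r_j^{-m}$, and collect powers of $r_j$. The $\chi_jf$ part of $f_j$ produces against $\int_{Q_j^{*}}|\nabla^{a}f|^{p}$ the weight exponent $(1-\beta+a-k)p$, and since $a\le k$, $\beta\in(0,1]$ and $\Omega$ is bounded one has $1-\beta+a-k\ge(a-k)/\beta$, so this is dominated by $d(t,x)^{(a-k)p/\beta}$, the target weight in \eqref{eqthm:highso}; the $\nabla\chi_j\cdot B'f$ part produces against $\int_{Q_j^{*}}|\nabla^{a}B'f|^{p}$ the weight $d(t,x)^{(a-k-\beta)p}$, which is precisely the weight for which Step~1 applies. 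Summing over $j$ by the bounded-overlap property of the $P_j^{*}$ and invoking Step~1 proves \eqref{eqthm:highso} for $\kappa=0$. The case $\kappa=1$ follows as in Theorem~\ref{thm:bogovski_on_w1p} by running the $\I,\II,\III$ splitting with the order-$(k+1)$ bounds and the $\Delta_h$-version of Step~1, which supplies the trailing $o(1)$. For \eqref{eqthm:highsot} one additionally distributes $\partial_t^{\kappa}$ by the Leibniz rule, uses $|\partial_t^{m}\chi_j|\lesssim\delta(r_j)^{-m}$ and the bound $\delta(r_j)\le Cr_j^{1/\alpha}$ (valid because $\alpha>0$ whenever $\kappa\ge1$), which contributes exactly the extra factor $d(t,x)^{-(\kappa-\lambda)p/(\alpha\beta)}$, just as in the derivation of \eqref{eqthm:time}. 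The replacement of the weighted left-hand side by the unweighted $\dot W^{k+1,q}$ norm is then immediate from Proposition~\ref{prop:tounweighted}.

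\emph{Main obstacle.} The delicate point is Step~1, that is, the bookkeeping of the distance weights: one must check that no lower-order term---produced either by differentiating $B_{ext,i}$ through Proposition~\ref{prop:bdry_bogovski} or by the Leibniz rule against the cutoffs $\varphi_i$, $\eta_i$, $\chi_j$---ever demands a weight more singular than the one furnished by the right-hand side of the theorem. This is the one place where the H\"older exponent $\beta$ (through $\Theta$ and the factor $1/\beta$ in the target weight) genuinely enters, and it is why the estimate degrades as $\beta\to0$.
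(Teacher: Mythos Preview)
Your proposal is correct and follows essentially the same route as the paper. Both arguments localize to the Whitney cylinders, apply the higher-order Sobolev bound for $B_{Q_j}$, expand $f_j=\nabla\chi_j\cdot B'f+\chi_j f$ by the Leibniz rule, and close using the pointwise commutation estimate and the weighted $L^p$ bound of Proposition~\ref{prop:bdry_bogovski} for the $B'$ piece; the only organizational difference is that you package the higher-order weighted $B'$ bounds as a separate Step~1, whereas the paper performs the commutation $\partial^{\gamma}B'f\to B'\partial^{\gamma}f$ directly inside the assembly, arriving at the same term $\sum_{l\le k}\int|B'\Delta_h^{\kappa}\partial^{\gamma}f|^{p}d^{-(k-l+\beta)p}$ before applying Proposition~\ref{prop:bdry_bogovski}.
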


\begin{remark}
The weights in the above estimate can also be located differently.
The proof allows for setting $ d(t,x)^{b} $ to the integral on the left hand side 
and $ d(t,x)^{b/\beta}$ to all terms on right hand side.
Here $ b $ must be non-positive.

To facilitate the later reference, 
we also note that the estimate \eqref{eqthm:highso} can be written as 
\[
 \no{\Delta_h Bf}_{W^{k+1,p,\infty,0,\infty,\beta-1}(t,\Omega)} \le C \no{\Delta_h  f}_{W^{k,p,\beta,0,\infty,0}(t,\Omega)} + o(1)
\]
and the estimate \eqref{eqthm:highsot} reads 
\[
 \no{Bf}_{W^{k+1,p,\infty,\kappa,\infty,\beta-1}(t,\Omega)} \le C \no{f}_{W^{k,p,\beta,\kappa,\alpha \beta,0}(t,\Omega)}
\]
when using the notation from Section \ref{sec:function}.
Note that there are additional terms on the left hand side,
but those only lead to lower order contributions,
which can be controlled by the same right hand side as the highest order contribution.
\end{remark}

\begin{proof}
The proof is a continuation of the proof of Theorem \ref{thm:bogovski_on_w1p}.
We let $ B$ be the operator from Theorem \ref{thm:bogovski_on_w1p},
and consider the notation as given in Subsections \ref{sec:whitney} and \ref{sec:construction}.

We first estimate  
\begin{equation*}
\sum_{|\gamma| = k+1}  \int_{Q_{j_0}^{*}} | \Delta_h^{\kappa}\partial^{\gamma}    Bf  |^{p}\, dx 
\le \sum_{j \in J(k,j_0)}
\sum_{|\gamma| = k+1}  \int_{Q_j^{*}} |\partial^{\gamma} B_{Q_{j}}  \Delta_h^{\kappa} f_{j}  |^{p}\, dx 
\end{equation*}
where $ \kappa \in \{0,1\} $.
Using the bound for $ B_{Q_j} $,
we estimate a single $ j $ term by
\begin{equation*}
  \sum_{|\gamma| = k}  \int |\partial^{\gamma}  \Delta_h^{\kappa}f_{j}  |^{p}\, dx 
  = \sum_{|\gamma| = k}   \int |\partial^{\gamma} \Delta_h^{\kappa}  (\nabla \chi_j \cdot B' f ) 
  +\partial^{\gamma}   \Delta_h^{\kappa}  ( \chi_jf )|^{p} \, dx.
\end{equation*}
By Leibniz rule and the derivative bounds for $ \chi_j $ based on the Whitney property,
the second term is of the desired form.

We focus on the first term.
Using the Leibniz rule,
we find an upper bound by
\begin{equation*}
  C \sum_{\lambda=0}^{\kappa} \sum_{l=0}^{k} \sum_{ \substack{|\gamma| = l}} r_j^{- ( 1+ k - l)p}(h /\delta(r_j))^{\kappa - \lambda}  \int_{Q_j^{*}} |\partial^{\gamma} \Delta_h^{\lambda}B' f  |^{p}\, dx ,
\end{equation*}
where we used that 
\[
|\Delta_h^{\lambda}\partial^{\gamma'}\nabla\chi_j| \lesssim r_j^{-( 1 + k - l)}(h /\delta(r_j))^{\kappa - \lambda}
\]
for $ |\gamma'| = k-l $ and $ 0 \le \lambda \le \kappa $.
Next we notice that the term with $ \kappa - \lambda = 1 $
is of the order $ o(1) $ as $ h \to 0 $.

Collecting the argument together,
we bound
\[
\sum_{j_0} \sum_{|\gamma| = k+1}  r_{j_0}^{(1-\beta)p} \int_{Q_{j_0}^{*}}  |\partial^{\gamma} \Delta_h^{\kappa}    Bf  |^{p}\, dx \le \I + \II
\]
where 
\begin{align}
  \label{eq:alternativeforhardy1}
\I &= C \sum_{\substack{|\gamma| \le k  \\ \gamma \in \{0\} \times \N^{n}  }} \int |\partial^{\gamma} \Delta_h^{\kappa} f(t,x)|^{p}d(t,x)^{(|\gamma|-k)p} \, dx \\
\II &=  C\sum_{l \le k} \sum_{ \substack{|\gamma| = l}}  \int  | \Delta_h^{\kappa}\partial^{\gamma} B' f  |^{p}d(t,x)^{- (k - l + \beta)p} \, dx + \kappa o(1). \nonumber
\end{align}
We use the definition \eqref{eq:bprime} and Propositions \ref{prop:interior_bogovski}
and \ref{prop:bdry_bogovski} to estimate 
\begin{equation}
  \label{eq:alternativeforhardy2}
\II \le C \sum_{l \le k} \sum_{ \substack{|\gamma| = l}}  \int  | B' \Delta_h^{\kappa}\partial^{\gamma} f  |^{p}d(t,x)^{ -(k - l + \beta)p} \, dx + \kappa o(1).
\end{equation} 
Then, for $h$ small enough, 
we can apply Proposition \ref{prop:bdry_bogovski} to conclude \eqref{eqthm:highso}.

The case with time derivatives is not much different.
We can replace the finite difference $\Delta_h$
by an iterated differentiation in time
and replace the use of product rule for finite differences 
by the Leibniz rule for iterated differentiation.
The estimate \eqref{eqthm:highsot} then follows as above.
\end{proof}

\begin{corollary}
  \label{cor:highso_lip}
  If $\Omega$ is $C^{\alpha,\beta,\theta}$ with $0 < \alpha, \beta \le 1 $
  and $\Omega_t$ satisfies Hardy's inequality \eqref{eq:hardy_lipschitz} for negative powers,
  then the right hand side of \eqref{eqthm:highsot} can be replaced by 
  \[
   C  \left(\sum_{\lambda=0}^{\kappa} \sum_{\substack{|\gamma| = k  \\ \gamma \in \{0\} \times \N^{n}  }} \int |\partial^{\gamma}\partial_t^{\lambda} f(t,x)|^{p}d(t,x)^{\frac{(\kappa - \lambda)p}{\alpha \beta}} \, dx \right)^{\frac{1}{p}}
  \]
  and the norm on the right hand side of \eqref{eqthm:highso} can be replaced by $C \no{f}_{\dot{W}^{k,p}(\Omega_t)}$.
  Here $f$ is assumed to be compactly supported.
\end{corollary}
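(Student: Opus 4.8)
The plan is to re-enter the proof of Theorem~\ref{thm:high} rather than post-process \eqref{eqthm:highso}--\eqref{eqthm:highsot}. Recall that there the left-hand side of \eqref{eqthm:highso} was bounded by $\I+\II$, with $\I$ as in \eqref{eq:alternativeforhardy1} and $\II$ reduced through \eqref{eq:alternativeforhardy2} to a finite sum of terms $\int |B'\partial^{\gamma}\Delta_h^{\kappa} f|^{p}\, d(t,x)^{-(k-|\gamma|+\beta)p}\,dx$, $|\gamma|\le k$, to which Proposition~\ref{prop:bdry_bogovski} was then applied with a parameter forcing $\Theta=1$ and yielding the weight $d^{(|\gamma|-k)p/\beta}$. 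The extra assumption that $\Omega_t$ satisfy Hardy's inequality for negative powers lets me postpone that last step and keep a better weight.

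First I would use that, since $f$ is compactly supported, the auxiliary field $B'f$ from \eqref{eq:bprime} is compactly supported in $\Omega$; in particular $B'g(t,\cdot)$ and all its spatial derivatives vanish near $\partial\Omega_t$ for $g=\partial^{\gamma}\Delta_h^{\kappa}f$, so Hardy's inequality \eqref{eq:hardy_lipschitz} is available for them. To a term with $|\gamma|=l\le k$ I would apply \eqref{eq:hardy_lipschitz} exactly $k-l$ times to $B'\partial^{\gamma}\Delta_h^{\kappa}f$: the $j$-th application uses the exponent $b_j=-(k-l-j)-\beta\le-\beta\le 0$, which lies in the negative-powers range, and raises the spatial derivative order by one, moving the weight exponent from $-(k-l-j+1+\beta)p$ to $-(k-l-j+\beta)p$. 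After $k-l$ steps one is left with $\int |\nabla^{k-l}B'\partial^{\gamma}\Delta_h^{\kappa}f|^{p}\, d(t,x)^{-\beta p}\,dx$. The pointwise commutator estimate in Proposition~\ref{prop:bdry_bogovski} bounds the integrand, modulo constants, by $\sum_{|\mu|=k}|B'\partial^{\mu}\Delta_h^{\kappa}f|+\sum_{|\nu|=k-1}|\partial^{\nu}\Delta_h^{\kappa}f|$. To the main piece I would apply Proposition~\ref{prop:bdry_bogovski} now with $b=0$, so that $\Theta b=0$ and the bound becomes the unweighted $\sum_{|\mu|=k}\int|\partial^{\mu}\Delta_h^{\kappa}f|^{p}\,dx$; for the remainder I would note that $\beta\le 1$ and boundedness of $\Omega$ give $d^{-\beta p}\le Cd^{-p}$ and then apply \eqref{eq:hardy_lipschitz} once more with $b=0$, again reaching $\no{\Delta_h^{\kappa}f}_{\dot{W}^{k,p}(\Omega_t)}^{p}$. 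Since $\I$ carries only the $\beta$-free weight $d^{(|\gamma|-k)p}$, iterating \eqref{eq:hardy_lipschitz} $k-|\gamma|$ times (exponents $j-(k-|\gamma|)\le 0$) collapses it to the same quantity. Summing the finitely many contributions gives \eqref{eqthm:highso} with the asserted right-hand side; the finite-difference errors are the same $o(1)$ as before because $B'$ commutes with $\Delta_h$.

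For \eqref{eqthm:highsot} I would run the identical argument slicewise in the time-derivative index $\lambda$ -- the operators involved commute with $\partial_t$ up to lower-order-in-time contributions, exactly as already used in the proof of Theorem~\ref{thm:high} -- carrying the weight $d(t,x)^{-(\kappa-\lambda)p/(\alpha\beta)}$ inertly through all spatial manipulations; the auxiliary Hardy applications on the remainder terms then use the still-admissible exponent $-(\kappa-\lambda)/(\alpha\beta)\le 0$, and the procedure ends with $C\big(\sum_{\lambda=0}^{\kappa}\sum_{|\gamma|=k}\int|\partial^{\gamma}\partial_t^{\lambda}f|^{p}\, d(t,x)^{-(\kappa-\lambda)p/(\alpha\beta)}\,dx\big)^{1/p}$, i.e.\ with the weight that \eqref{eqthm:highsot} already attaches to its $|\gamma|=k$ terms.

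The main obstacle I anticipate is purely the bookkeeping of weights: one must verify that \emph{every} exponent produced in the two interleaved reductions -- the Hardy iteration on the $B'$-terms and the concluding applications of Proposition~\ref{prop:bdry_bogovski} -- stays in the negative-powers range, which is precisely where $\beta\le 1$ and the compact support of $f$ (ensuring $B'f$ has vanishing spatial trace) are used in an essential way. Arranging the lower-order remainders so that they are visibly of the already-controlled form is cleanest via a short induction on $k$, but involves nothing new.
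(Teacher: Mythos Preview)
Your proposal is correct and follows essentially the same route as the paper: re-enter the proof of Theorem~\ref{thm:high}, bound the left-hand side by $\I+\II$, and use the compact support of $B'f$ to iterate Hardy's inequality \eqref{eq:hardy_lipschitz} so that the weights $d^{-(k-l+\beta)p}$ collapse to $d^{-\beta p}$, after which the weighted $L^p$ bound from Proposition~\ref{prop:bdry_bogovski} (with $b\le 0$, hence $\Theta=1$) finishes the job. The paper's proof is terser but uses the same ingredients in the same order; your exponent bookkeeping is in fact more explicit than what the paper provides. One small imprecision: the pointwise commutator estimate in Proposition~\ref{prop:bdry_bogovski} is stated for the boundary pieces $B_{ext,i}$, not for $B'$ itself, so when you pass derivatives through $B'$ you must also invoke the structure of \eqref{eq:bprime} and Proposition~\ref{prop:interior_bogovski} to handle the cut-offs $\varphi_i,\eta_i$ and the interior term --- exactly what the paper calls ``the macroscopic estimates for definition \eqref{eq:bprime}'' --- but this produces only harmless lower-order contributions supported where $d\gtrsim 1$ or controllable by one more Hardy step.
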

\begin{proof}
  The left hand side of \eqref{eqthm:highsot}
  is bounded by $\I$ and $\II$ in \eqref{eq:alternativeforhardy1} and \eqref{eq:alternativeforhardy2}
  with $\Delta_h$ replaced by $\partial_t$ and $o(1)$ replaced by zero.
  Applying Hardy's inequality \eqref{eq:hardy_lipschitz} 
  repeatedly and then the macroscopic estimates for definition \eqref{eq:bprime}, Proposition \ref{prop:interior_bogovski}
  and derivatives bounds in Proposition \ref{prop:bdry_bogovski}, 
  we estimate the version of $\II$ with time derivatives by 
  \[
     C \sum_{\lambda = 0}^{\kappa} \sum_{l \le k} \sum_{ \substack{|\gamma| = l}}  \int  |  B' \partial_t^{\kappa- \lambda }\partial^{\gamma} f  |^{p} d(t,x)^{- \beta p + (\lambda- \kappa)p/\alpha} \, dx  .
  \]
  Applying then the weighted $L^{p}$ bound in Proposition \ref{prop:bdry_bogovski},
  we conclude the proof for $\II$.
  The estimate for $\I$ is simpler as we can skip 
  the part of the argument related to $B'$ 
  but otherwise argue as above with Hardy's inequality.
\end{proof}

\begin{remark}
  
  If $\beta = 1$, the assumptions of the Corollary \ref{cor:highso_lip}
  are satisfied.
  In addition, any $C^{\alpha,\beta,\theta}$ domain in $\R^{1+2}$
  satisfies the assumptions as discussed in connection with Theorem 1.2 in \cite{MR3168477}.
\end{remark}

Next we discuss the negative order estimates.
These estimates follow from the positive order estimates for the adjoint operator.
We point out that the unweighted version of 
the theorem below holds for the classical construction in Lipschitz domains
with zero boundary values  when $k <  2 - 1/p $.
For lower values of $k$ it holds with zero boundary values on the right hand side but not on the left hand side.
In accordance,
we obtain an unweighted bound with zero boundary values for $ k = 1$ and $\beta = 1$ on the left hand side, 
but the value $k = 2$ already creates a weight function to quantify 
the possible failure of the unweighted estimates. 
The claim could not be proved with input in the dual of inhomogeneous Sobolev space,
but the dual of homogeneous Sobolev space would already do in the stationary Lipschitz case. 
We refer to the discussion starting on page 182 of the book \cite{MR2808162} as well as the paper \cite{MR3168477} for more,
but we do not attempt to prove results beyond zero boundary value input in our setting.

\begin{theorem}
\label{thm:negat}
Let $ \Omega \subset \R ^{1+n} $ be a $ C^{\alpha,\beta,\theta} $ domain with $0 < \alpha \le 1$
and $0 <\beta \le 1$.
Fix an integer $k > 0$, 
a real number $1<p<\infty$, 
and another integer $\lambda \ge 0$.
If $\beta > 0$, 
then for any $ f \in C_{smz}^{\infty} $ and 
any time $t$ 
\begin{align}
\label{eqthm:negattime1}
\|\partial_t^{\lambda} Bf\|_{W_0^{-k+1,p,-1,0,\infty, \beta -1 }(t,\Omega)} &\le C  \| f\|_{W_0^{-k,p,\infty,\lambda, \alpha \beta,0 } (\Omega_t)}  .
\end{align}
If $\lambda = 0$,
the bound holds with $\beta = 0$.
\end{theorem}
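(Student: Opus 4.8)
The plan is to prove \eqref{eqthm:negattime1} by duality, transferring the operator and the time derivatives onto a test function and invoking positive-order weighted Sobolev estimates for the adjoint of $B$ that are the adjoint counterparts of Theorems~\ref{thm:bogovski_on_w1p} and~\ref{thm:high}. Writing out the negative-order norm on the left hand side, \eqref{eqthm:negattime1} is equivalent to the a priori bound
\[
\left\lvert \la{ g, \partial_t^\lambda Bf(t,\cdot)} \right\rvert \le C\, \no{f}_{W_0^{-k,p,\infty,\lambda,\alpha\beta,0}(\Omega_t)}
\]
for all $g \in C^\infty(\Omega_t)$ with $\no{g}_{W^{k-1,p',1,0,\infty,1-\beta}(t,\Omega)} \le 1$, the conjugate exponent $p'$ appearing because we dualize; the extra spatial derivative (the passage from $-k$ to $-k+1$) will be supplied by the one derivative gained by the Poincar\'e-type adjoint building blocks.

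First I would expand $Bf = \sum_j B_{Q_j} f_j$ with $f_j = \nabla\chi_j \cdot B'f + \chi_j f$ as in Subsection~\ref{sec:construction}, use that each spatial operator $B_{Q_j}$ commutes with $\partial_t$, and apply the Leibniz rule to push $\partial_t^\lambda$ through $f_j$. This produces factors $\partial_t^{\lambda-\lambda'}\chi_j$, controlled on the Whitney cylinders by $\delta(r_j)^{-(\lambda-\lambda')} \le C r_j^{-(\lambda-\lambda')/\alpha}$, and it feeds time derivatives into $B'$. The primitive operator $B'$ from \eqref{eq:bprime} is then transposed onto the test side through the adjoints of its constituents: $B_{ext,i}^*$, which by Proposition~\ref{prop:adjointtrivial} commutes with $\partial_t$, and the interior adjoint, whose non-commutation with $\partial_t$ is handled exactly by the time-shifted operators $B_{int,\mu}^*$ and the localized bound \eqref{decopsadjoint} recorded in the proof of Theorem~\ref{thm:bogovski_on_w1p}. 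Collecting terms yields an identity
\[
\la{ g, \partial_t^\lambda Bf(t,\cdot)} = \sum_{\lambda'=0}^{\lambda} c_{\lambda'}\, \la{ G_{\lambda'}, \partial_t^{\lambda'} f(t,\cdot)},
\]
where each $G_{\lambda'}$ is an explicit linear transform of $g$ built from $B_{Q_j}^*$ (Proposition~\ref{prop:reference}), the cutoffs $\chi_j$, $\varphi_i$, $\eta_i$ and their derivatives, and the adjoint operators $B_{ext,i}^*$ and $B_{int,\mu}^*$.

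The core of the argument is then the positive-order estimate
\[
\no{G_{\lambda'}}_{W^{k,p',\infty,0,\infty,-(\lambda-\lambda')/(\alpha\beta)}(t,\Omega)} \le C\, \no{g}_{W^{k-1,p',1,0,\infty,1-\beta}(t,\Omega)},
\]
which I would prove by running the Whitney-cube argument of Theorems~\ref{thm:bogovski_on_w1p} and~\ref{thm:high} for the adjoint operators. On each cylinder $P_j^*$ one has $\dist(x,\partial\Omega_t) \sim r_j$, so all distance weights become powers of $r_j$; the local estimates for $B_{Q_j}^*$ (Proposition~\ref{prop:reference}), $B_{ext,i}^*$ (Proposition~\ref{prop:adjointtrivial}) and $\zeta B_{int,\mu}^*$ (via \eqref{decopsadjoint}) apply on the dilated bases $Q_j^*$; and the competing $r_j$-powers — gained from the derivative gain of $B_{Q_j}^*$, lost from the $\partial_t^{\lambda-\lambda'}\chi_j$, carried by the distortion weight $d^{-(1-\beta)}$ on $g$ and demanded by the output weight $d^{-(\lambda-\lambda')/(\alpha\beta)}$ — combine, using $\delta(r_j)^{-1}\le C r_j^{-1/\alpha}$ and $\beta \le 1$, to a term-by-term bound that sums up by the bounded overlap of $\{P_j^*\}$ and boundedness of $\Omega$. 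With this in hand, normalizing each $G_{\lambda'}$ and using the definition of $W_0^{-k,p,\infty,\lambda,\alpha\beta,0}(\Omega_t)$ gives $\lvert\la{ G_{\lambda'},\partial_t^{\lambda'}f}\rvert \le \no{G_{\lambda'}}_{W^{k,p',\infty,0,\infty,-(\lambda-\lambda')/(\alpha\beta)}(t,\Omega)}\,\no{f}_{W_0^{-k,p,\infty,\lambda,\alpha\beta,0}(\Omega_t)}$; summing the finitely many terms in $\lambda'$ and taking the supremum over admissible $g$ yields \eqref{eqthm:negattime1}. The case $\lambda = 0$ and $\beta = 0$ is simpler, as no time derivatives need to be commuted past the interior operator and no spatial Sobolev regularity is required of the adjoint building blocks; there one uses only the $\beta = 0$ part of Proposition~\ref{prop:adjointtrivial}, unweighted $L^{p'}$ bounds and the single Sobolev gain from $B_{Q_j}^*$.

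The step I expect to be the main obstacle is the weight bookkeeping in the core estimate: one must verify that the various powers of $\dist(x,\partial\Omega_t)\sim r_j$ produced by the distortion weight on $g$, by the output weight, by the cutoffs $\partial_t^{\lambda-\lambda'}\chi_j$, and by the gain or loss of derivatives in each adjoint block and in the local Poincar\'e inequalities cancel exactly on every Whitney cylinder; and that the adjoint of the primitive operator \eqref{eq:bprime}, including the correction term $-\sum_i \nabla\varphi_i \cdot B_{ext,i}(\eta_i f)$, is transposed correctly and leaves $G_{\lambda'}$ in the class of (not necessarily compactly supported) test functions appropriate for the $W_0$ norms. Beyond this, no genuinely new idea over the combination of duality with the earlier constructions seems to be required.
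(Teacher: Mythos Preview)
Your proposal is correct and follows essentially the same route as the paper: you dualize, expand $\partial_t^\lambda Bf$ via the Whitney decomposition and the Leibniz rule, transpose the building blocks $B_{Q_j}$, $B_{ext,i}$ and the interior piece onto the test function (the paper packages this as explicit operators $B_{\lambda'}^*$), and then prove the key positive-order adjoint estimate --- your ``core estimate'' is exactly the paper's display \eqref{eq:innegatthm}, up to the relabeling $\lambda' \leftrightarrow \lambda - \lambda'$. The weight bookkeeping you flag as the main obstacle is indeed the substance of the paper's computation, carried out by combining Proposition~\ref{prop:adjointtrivial}, the bound \eqref{decopsadjoint}, Proposition~\ref{prop:reference} and Poincar\'e on each $P_j^*$ with $d \sim r_j$, just as you outline.
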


\begin{proof}
To carry out the proof, 
we need to study the adjoint of $B$ in detail. In principle we need to dualize the {\em local Bogovskij operators} on the Whitney-type cover first, then the decomposition operator and finally the first auxiliary operator.
We prove the inequality \eqref{eqthm:negattime1} first.
Using the notation from Sections \ref{sec:auxop}, \ref{sec:whitney}, \ref{sec:construction} and Remark~\ref{rmk:dualbprime}
we write 
\begin{equation*}
\partial_t^{\lambda}Bf = \sum_{\lambda' = 0}^{\lambda} \binom{\lambda}{\lambda'}  \sum_{j} B_{Q_j}\left( \partial_t^{\lambda'}\nabla \chi_j \cdot \partial_t^{\lambda- \lambda'}B' f + \partial_t^{\lambda'} \chi_j \partial_t^{\lambda-\lambda'}f  \right) .
\end{equation*}
As before we divide $B'$ defined in \eqref{eq:bprime}
into two parts
\[
B_1 = \sum_{i} \varphi_i B_{ext,i}(\eta_i f) , \quad B_2 =  B_{int}(f- \sum_{i} \varphi_i \eta_i f - \sum_{i} \nabla \varphi_i \cdot B_{ext,i}(\eta_i f) ).
\]
Let $\zeta_1$ be a smooth cut-off that equals one where 
\[
\min_i \eta_i > 0
\]
and let $\zeta_2$ be a smooth cut-off that is equal to one
where 
\[
f- \sum_{i} \varphi_i \eta_i f - \sum_{i} \nabla \varphi_i \cdot B_{ext,i}(\eta_i f) \ne 0.
\]
We define the relevant (almost) adjoint operators 
\begin{equation*}
  B_{1,\lambda'}^{*} g =  \zeta_1  \sum_{i}  (\partial_t^{\lambda' } \eta_i)  B_{ext,i}^{*}( \varphi_i  g  )
\end{equation*}
and 
\begin{multline*}
B_{2,\lambda'}^{*} g =  
\zeta_2   B_{int,\lambda'}^{*}g \\
 - \zeta_2 \sum_{\lambda'' = 0}^{\lambda'} \sum_{i}  \binom{\lambda'}{\lambda''}    
(\partial_t^{\lambda' - \lambda''}\eta_i) (  \varphi_i  B_{int,\lambda''}^{*}g - \sum_{i}   B_{ext,i}^{*}(B_{int, \lambda''}^{*}g \nabla    \varphi_i)  ). 
\end{multline*} 
Here $B_{int,\lambda'}^{*}$ is the same operator as in \eqref{decopsadjoint}.

By Proposition \ref{prop:adjointtrivial}, 
we find 
\begin{multline}
\label{adjoint2}
 \int_{\Omega_t}   |\partial^{\gamma} B_{1,\lambda'}^{*}g(t,x)|  ^{p} d(t,x)^{b\beta}  \, dx \\
    \le C   \sum_{\substack{\gamma' \in \{0\} \times \N^{n} \\ |\gamma'| \le |\gamma|}} \int_{\Omega_t} | \partial^{\gamma}g(t,x) (t,x)|^{p}d(t,x)^{ \Theta b+ \beta p } \, dx 
\end{multline}
for all $b > -1 $
where 
\[
\Theta = \begin{cases}
  \beta, \quad 0 \ge b > -1 \\
 \beta^{2}, \quad b > 0
\end{cases}.
\]
On the other hand,
$  d(t,x) \sim 1$ in the support of $\zeta_2$,
and so we can use Proposition \ref{prop:interior_bogovski}
and Poincar\'e's inequality to conclude that 
\eqref{adjoint2} holds with $B_{1,\lambda'}^{*}$ replaced by $B_{2,\lambda'}^{*}$,
as shown in \eqref{decopsadjoint}.
Set
\begin{equation}
\label{negatproof:decomposition}
B_{\lambda'}^{*} g = \sum_{j} \left( (B_{1,\lambda'}^{*} + B_{2,\lambda'}^{*}) (B_{Q_j}^{*}(g\tilde{\chi}_j) \nabla \partial_t^{\lambda'} \chi_j)  + (\partial_t^{\lambda'}\chi_j)(B_{Q_j}^{*}(g\tilde{\chi}_j))\right) 
\end{equation}
where $\tilde{\chi}_j$ is a bump function equal to one in the support of $B_{Q_j}f_j$ (with finite overlap and the usual bounds).

To prove the bound in the statement of the theorem,
take a smooth test function $\psi$.
We may assume without loss of generality that $\partial_t^{\lambda} Bf$ is compactly supported.
Now 
\begin{equation*}
|\la{ \partial_t^{\lambda} Bf, \psi}| 
  = \bigg|\sum_{\lambda' = 0}^{\lambda} \binom{\lambda}{\lambda'}  \la{\partial_t^{\lambda - \lambda'}f, B_{\lambda'}^{*}\psi }\bigg|
  \le \sum_{\lambda' = 0}^{\lambda} \binom{\lambda}{\lambda'}  
   |\la{   \partial_t^{\lambda - \lambda'}f, B_{\lambda'}^{*}\psi }| .
\end{equation*}
Hence it suffices to show that 
\begin{equation}
\label{eq:innegatthm}
\no{ B_{\lambda'}^{*} \psi}_{W^{k,p,\infty,0,\infty,- \lambda'/(\alpha \beta)}(t,\Omega)} \le  
 C \no{ \psi }_{W^{k-1,p,1,0,\infty, 1-\beta }(t,\Omega)}
\end{equation}
and we may estimate the terms $B_{1,\lambda'}^{*}$ and $B_{2,\lambda'}^{*}$ as well as the third term in the definition separately.
Note that as the starred operators never have compact support,
the right hand side of the norm inequality \eqref{eqthm:negattime1} is bound to have a function space with subscript zero
as we insist on having $k\ge 1$.

We start the estimation from $B_{1,\lambda'}^{*}$. The estimates for the other terms follow by the same argument.
It follows from \eqref{adjoint2} with $b = p \lambda'/ (\alpha \beta) $, 
the Poincar\'e inequality 
and Proposition \ref{prop:reference}
that 
\begin{align}
\label{eq:bstarest}
\begin{aligned}
&\no{ B_{1,\lambda'}^{*} \psi}_{W^{k,p,\infty,0,\infty,-\lambda'/(\alpha \beta)}(t,\Omega)}
 \\
&\quad  \le \sum_{|\gamma|  \le  k}  \int | \partial^{\gamma} 
B_{1,\lambda'}^{*}        \left( \sum_j B_{Q_j}^{*}(\psi \tilde{\chi}_j) \nabla \partial_t^{\lambda'} \chi_j \right)       |^{p} d(t,x)^{p \lambda'/(\alpha \beta ) } \, d x \\
&\quad   \le C \sum_{j} \sum_{|\gamma| \le k} r_{j}^{(| \gamma|-k)p +(\beta-1)p } \int_{P_j^{*}}   | \partial^{\gamma} B_{Q_j}^{*}(\tilde{\chi}_j \psi) |^{p}  \, d x \\
&\quad  \le C \sum_{j} \sum_{|\gamma|+ 1 \le k} r_{j}^{(| \gamma| +1 -k)p + (\beta -1)p} \int_{\Omega_t} | \partial^{\gamma} (\tilde{\chi}_j \psi) |^{p}  \, dx \\
  &\quad   \stackrel{*}{\le}  C \sum_{|\gamma| \le k-1} \int_{\Omega_t} |\partial^{\gamma} \psi(t,x)|^{p}d(t,x)^{(| \gamma|-k +1)p  + (\beta - 1)p} \, dx.
\end{aligned}
\end{align} 
This is the desired bound for $B_{1,\lambda'}^{*}$.
As the bound for $B_{2,\lambda'}^{*}$ and the bound for the third term in \eqref{negatproof:decomposition} 
follow by the same computation,
the proof of \eqref{eq:innegatthm} is complete.
The claim \eqref{eqthm:negattime1} follows by the definition of the norms.
\end{proof}

\begin{remark}
One time derivative can be replaced by a 
finite difference as was done before using that the input function $f$ is compactly supported.
We omit the details of the proof.
\end{remark}

Whereas Theorem \ref{thm:negat} dealt with duals of intersection spaces,
we can reformulate a result for duals of sum spaces.
In the proposition below,
an intersection space gets mapped into a sum space,
which looks like a weaker result,
but from the point of view of applications using an input data in an intersection space is very natural.

\begin{proposition}
\label{prop:new}
Assume that $\Omega\in C^{\alpha,\beta,\theta}$ with $0 < \alpha,\beta \le 1$.
Let $\lambda \ge 0$ be an integer,
and for each $0 \le \lambda' \le \lambda$,
take $k_{\lambda'} \ge 1$ and $p_{\lambda'} \in (1,\infty) $.
Then for all $f \in C_{smz}^{\infty}(\Omega)$
\[
\sup_{\psi} | \la{ \partial_{t}^{\lambda} Bf, \psi } | \le C \sum_{\lambda'=0}^{\lambda} \no{\partial_t^{\lambda-\lambda'}f}_{X_{\lambda'}^{*}}
\]
where the supremum is over all bounded $\psi \in C^{\infty}(\Omega_t)$ with 
\[
\sum_{\lambda'=0}^{\lambda} \no{\psi}_{Y_{\lambda'}} \le 1
\]
and $(X_{\lambda'},Y_{\lambda'})$ is either 
\[
( W^{k_{\lambda'},p_{\lambda'},\infty,0,\infty,0}(t,\Omega), W^{k_{\lambda'}-1,p_{\lambda'},1,0,\infty,1-\beta +  \lambda' /\alpha}(t,\Omega))
\]
or 
\[
( W^{k_{\lambda'}-1,p_{\lambda'},\infty,0,\infty,0}(t,\Omega), W^{k_{\lambda'}-1,p_{\lambda'},1,0,\infty,-\beta +  \lambda' /\alpha}(t,\Omega)).
\]
\end{proposition}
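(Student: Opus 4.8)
The plan is to run the scheme of the proof of Theorem \ref{thm:negat} but to replace the single intersection-type estimate \eqref{eq:innegatthm} by a short menu of estimates, one for each admissible pair $(X_{\lambda'},Y_{\lambda'})$. First I would recall the algebraic identity established there,
\[
\la{\partial_t^{\lambda} Bf,\psi} = \sum_{\lambda'=0}^{\lambda}\binom{\lambda}{\lambda'}\la{\partial_t^{\lambda-\lambda'}f, B_{\lambda'}^{*}\psi},
\]
with the almost-adjoint operators $B_{\lambda'}^{*}$ the ones assembled in \eqref{negatproof:decomposition}. By the definition of the dual norms in Section \ref{sec:function}, each summand is at most $\no{\partial_t^{\lambda-\lambda'}f}_{X_{\lambda'}^{*}}\no{B_{\lambda'}^{*}\psi}_{X_{\lambda'}}$, so everything reduces to the operator bound
\[
\no{B_{\lambda'}^{*}\psi}_{X_{\lambda'}} \le C\no{\psi}_{Y_{\lambda'}}
\]
for each of the two options. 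Granting this, if $\sum_{\lambda'}\no{\psi}_{Y_{\lambda'}}\le 1$ then each $\no{\psi}_{Y_{\lambda'}}\le 1$, whence $|\la{\partial_t^{\lambda}Bf,\psi}|\le C\sum_{\lambda'}\no{\partial_t^{\lambda-\lambda'}f}_{X_{\lambda'}^{*}}$ after absorbing the binomials into $C$; taking the supremum over such $\psi$ gives the claim.

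The operator bound is obtained by reproducing the chain of inequalities displayed at the end of the proof of Theorem \ref{thm:negat}, now with the weight index on the target side set to $0$ instead of $-\lambda'/(\alpha\beta)$. One splits $B_{\lambda'}^{*}$ into $B_{1,\lambda'}^{*}$, $B_{2,\lambda'}^{*}$ and the remaining term of \eqref{negatproof:decomposition}; the outer operators are controlled on weighted Sobolev norms by Proposition \ref{prop:adjointtrivial} (respectively Proposition \ref{prop:interior_bogovski} with Poincaré's inequality on the region where $\dist(\cdot,\partial\Omega_t)\sim 1$), while the inner operator $B_{Q_j}^{*}$ is controlled on each reference cube $Q_j^{*}$ by Proposition \ref{prop:reference}, where $\dist(\cdot,\partial\Omega_t)\sim r_j$ turns distance weights into powers of $r_j$. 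The cutoff derivatives contribute $r_j^{-(\#\,\mathrm{spatial})}$ and $\delta(r_j)^{-\lambda'}\lesssim r_j^{-\lambda'/\alpha}$ from $\partial_t^{\lambda'}\chi_j$, and it is the latter factor that forces the extra summand $\lambda'/\alpha$ into the last index of $Y_{\lambda'}$. The two options correspond to whether one spends the derivative gain of $B_{Q_j}^{*}$ on raising the top order of the output from $k_{\lambda'}-1$ to $k_{\lambda'}$, which shifts the distance weight on $\psi$ by one and produces the index $1-\beta+\lambda'/\alpha$, or instead keeps order $k_{\lambda'}-1$ on both sides, which leaves the index at $-\beta+\lambda'/\alpha$; in either case the exponent $\Theta$ from Proposition \ref{prop:adjointtrivial} is handled by choosing the parameter $b$ there so that $\Theta b$ matches, using $\beta\le 1$. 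Summing the localized estimates over $j$ via the bounded overlap of the dilated cylinders $P_j^{*}$ and rewriting $r_j$ back as $\dist(\cdot,\partial\Omega_t)$ delivers $\no{\psi}_{Y_{\lambda'}}$ on the right.

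The main obstacle is the bookkeeping of weight exponents as they propagate through the telescoping localized estimates, and in particular checking that the second option closes even though no derivative gain is used there: the bound of Proposition \ref{prop:adjointtrivial} with $b=0$, where $\Theta=\beta^{2}\le 1$, must be verified to produce exactly the index $-\beta$ after conversion from $r_j$ to the distance. Everything else is routine: the cases $\beta=1$ and $\lambda=0$ need no separate argument, and the well-posedness of the pairing $\la{\partial_t^{\lambda-\lambda'}f,B_{\lambda'}^{*}\psi}$ for $\psi$ only in $C^{\infty}(\Omega_t)$ is dealt with exactly as in Theorem \ref{thm:negat}, using that the starred operators carry no compact support while $f$ is tested against them.
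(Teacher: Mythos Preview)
Your proposal is correct and follows essentially the same approach as the paper. The paper also invokes the decomposition $\la{\partial_t^{\lambda}Bf,\psi}=\sum_{\lambda'}\binom{\lambda}{\lambda'}\la{\partial_t^{\lambda-\lambda'}f,B_{\lambda'}^{*}\psi}$ from the proof of Theorem~\ref{thm:negat} and then re-runs the chain of inequalities leading to \eqref{eq:innegatthm} with the weight placement adjusted; the only cosmetic difference is that the paper phrases the key operator bound dually, as an estimate on $(B_{\lambda'}^{*})^{*}\partial_t^{\lambda-\lambda'}f$ in $Y_{\lambda'}^{*}$, rather than as $B_{\lambda'}^{*}:Y_{\lambda'}\to X_{\lambda'}$, and for the second pair of spaces it singles out the mechanism as ``applying Poincar\'e's inequality one more time in the inequality marked with an asterisk,'' which is exactly your ``not spending the derivative gain of $B_{Q_j}^{*}$.''
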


\begin{proof}
We claim $\partial_t^{\lambda} Bf$ to be in 
\[
 \left(\bigcap_{\lambda'= 0}^{\lambda} Y_{\lambda'}  \right)^{*} = \sum_{\lambda'=0}^{\lambda} Y_{\lambda'}^{*}.
\]
We refer back to the proof of Theorem \ref{thm:negat}.
Recall that 
\[
\la{\partial_t^{\lambda} Bf, \psi} 
  = \sum_{\lambda' = 0}^{\lambda} \binom{\lambda}{\lambda'} \la{\partial_t^{\lambda - \lambda'} f  , B_{\lambda'}^{*} \psi}
\]
where we define $B_{\lambda'}^{*}$ according to \eqref{negatproof:decomposition}.
Hence we are left with showing that 
\[
(B_{\lambda'}^{*})^{*} \partial_t^{\lambda - \lambda'} f \in Y_{\lambda'}^{*}.
\]
This follows by estimating $\no{B_{\lambda'}^{*} \psi}_{Y_{\lambda'}}\leq c\no{\psi}_{X_{\lambda}'}$.
These estimates follow by adjusting the estimates leading to to verification of \eqref{eq:innegatthm} accordingly and we omit the details here. We just mention, that in the case {\em the differentiability is not increased} (which is the second possible choice of pairing) one can decrease the power of the distance using Poincar\'e's inequality. It happens precisely in the inequality marked with an asterisk in \eqref{eq:bstarest}.
\end{proof}

The strength of this formulation lies in the fact 
that the spaces above can often be identified with weighted $L^{p}$ spaces.
As an important special case,
we consider $1<p,s< \infty$ and assume
\[
\partial_t f  \in W^{-1,p}(\Omega_t) , \quad f \in L^{s}(\Omega_t).
\]
Then $\partial_{t} Bf$ is in the dual space of 
\[
d^{(\beta-1)p' }L^{p}(\Omega_t) \cap d^{(\beta   -1/\alpha)s'  }L^{s'}(\Omega_t)
\]
which is
\[
d^{(1-\beta)p }L^{p}(\Omega_t)  +  d^{( -\beta + 1 /\alpha) s  }L^{s}(\Omega_t).
\]
In particular, Proposition \ref{prop_last_intro} follows with aid of this estimate 
and Proposition \ref{prop:tounweighted}.

\begin{corollary}
\label{cor:timecont1}
Let $ \Omega $ be a $ C^{\alpha,\beta,\theta} $ domain with $\alpha , \beta \ge 0$.
Let $ \kappa \ge 0 $ and $p \in (1,\infty)$.
Let $f$ be a limit of functions in $ C_{0,smz}^{\infty}(\Omega)$
and assume, asymptotically as $h \to 0$,
\[ \sup_{t} \no{ \Delta_h f}_{W_0^{-1,p,\infty,\kappa,\alpha \beta,0}(t,\Omega)} = o(1) \] 
if $ \alpha , \beta > 0$ and 
\[ \sup_{t} \no{ \Delta_h f}_{W_0^{-1,p,\infty,0,\infty,0}(t,\Omega)} = o(1) \] 
if $\alpha = \beta = \kappa = 0$.

Then $  \partial_t^{\lambda} Bf \in C(0,T; L^{q}(\rn)) $ for all $0 \le \lambda \le \kappa$
and $q > 0$ such that 
\[
\frac{(1+ \varepsilon)(1-\beta)}{\theta} \le   \frac{1}{q} - \frac{1}{p}
\]
for some $\varepsilon > 0$.
\end{corollary}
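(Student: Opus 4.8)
The plan is to reduce the statement to the negative‑order bound \eqref{eqthm:negattime1} of Theorem~\ref{thm:negat} at $k=1$, to the weighted‑to‑unweighted embedding of Proposition~\ref{prop:tounweighted}, and to the finite‑difference variant of \eqref{eqthm:negattime1} recorded in the remark above. Continuity of $t\mapsto\partial_t^\lambda Bf(t,\cdot)$ as an $L^q(\rn)$‑valued map will be obtained from two ingredients: (a) each slice $\partial_t^\lambda Bf(t,\cdot)$ lies in $L^q(\rn)$, and (b) the family $\{\partial_t^\lambda Bf(t,\cdot)\}_t$ is equicontinuous in time in $L^q(\rn)$. These together produce a uniformly continuous $L^q(\rn)$‑valued function on $(0,T)$, which is exactly the claim.

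First I would unwind the weighted indices. With $k=1$ the left‑hand side space in \eqref{eqthm:negattime1} has spatial smoothness zero, so its space‑distortion parameter is vacuous and the norm is simply $\left(\int_{\Omega_t}|\cdot|^p\,d(t,\cdot)^{(1-\beta)p}\right)^{1/p}$. Hence \eqref{eqthm:negattime1} reads, for $g\in C_{smz}^\infty(\Omega)$ and $0\le\lambda\le\kappa$,
\[
\left(\int_{\Omega_t}|\partial_t^\lambda Bg(t,x)|^p\,d(t,x)^{(1-\beta)p}\,dx\right)^{1/p}\le C\,\|g(t,\cdot)\|_{W_0^{-1,p,\infty,\lambda,\alpha\beta,0}(\Omega_t)}\le C\,\|g(t,\cdot)\|_{W_0^{-1,p,\infty,\kappa,\alpha\beta,0}(t,\Omega)},
\]
the last inequality because enlarging the number of time derivatives only enlarges the relevant dual norm. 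Feeding this into Proposition~\ref{prop:tounweighted} slice by slice with $\beth=1-\beta$ and the domain parameter $\theta$ of \eqref{eq:defthetaintro} (the latter making the constant uniform in $t$) converts it into
\[
\sup_t\|\partial_t^\lambda Bg(t,\cdot)\|_{L^q(\rn)}\le C\,\sup_t\|g(t,\cdot)\|_{W_0^{-1,p,\infty,\kappa,\alpha\beta,0}(t,\Omega)}
\]
precisely when $(1-\beta)/\theta<1/q-1/p$, i.e. under the stated condition on $q$. Running the identical two reductions on the finite‑difference form of \eqref{eqthm:negattime1} (valid for compactly supported input at the price of an additive error tending to $0$ with $h$) yields, for $g\in C_{0,smz}^\infty(\Omega)$,
\[
\sup_t\|\Delta_h\partial_t^\lambda Bg(t,\cdot)\|_{L^q(\rn)}\le C\,\sup_t\|\Delta_h g(t,\cdot)\|_{W_0^{-1,p,\infty,\kappa,\alpha\beta,0}(t,\Omega)}+o_g(1)\qquad(h\to0).
\]

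The last step turns these two displays plus the hypotheses into (a) and (b) by approximation. The first display lets $\partial_t^\lambda B$ pass to the closure of $C_{0,smz}^\infty(\Omega)$ and shows $\partial_t^\lambda Bf(t,\cdot)\in L^q(\rn)$ for every $t$, which is (a). For (b): given $\varepsilon>0$, pick $f_m\in C_{0,smz}^\infty(\Omega)$ close to $f$, bound $\Delta_h\partial_t^\lambda B(f-f_m)$ uniformly in $t$ and $h$ by the first display together with the triangle inequality for $\Delta_h$, and bound $\Delta_h\partial_t^\lambda Bf_m$ by the finite‑difference display combined with $\sup_t\|\Delta_h f_m\|\le\sup_t\|\Delta_h f\|+O(\varepsilon)$ and the hypothesis $\sup_t\|\Delta_h f\|=o(1)$; letting $h\to0$ for fixed $m$ and then $\varepsilon\to0$ kills the modulus of continuity. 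The degenerate case $\alpha=\beta=\kappa=0$ is handled identically, using the instances of Theorem~\ref{thm:negat} valid for $\lambda=0$ and $\beta=0$. I expect the one genuinely delicate point to be this final approximation: the error $o_g(1)$ in the finite‑difference estimate depends on the approximant through its support and the modulus of continuity of $\partial\Omega$ there, so the order of limits — $h\to0$ at fixed $m$, only afterwards $m\to\infty$ — must be respected. The rest is bookkeeping with the weighted Sobolev indices.
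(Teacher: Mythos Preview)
Your proposal is correct and follows essentially the same route as the paper: specialize \eqref{eqthm:negattime1} to $k=1$, convert the weighted left-hand side to an unweighted $L^q$ norm via Proposition~\ref{prop:tounweighted}, invoke the finite-difference variant from the remark after Theorem~\ref{thm:negat} on a compactly supported approximant, and close by an $\epsilon/3$-type approximation with the order of limits $h\to 0$ before $m\to\infty$. Your identification of the one delicate point (the approximant-dependent $o_g(1)$ term forcing that order of limits) matches exactly what the paper's argument hinges on.
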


\begin{proof}
  Take $ t $.
  We handle the cases $\beta = 0$ and $\beta > 0$ simultaneously 
  using the convention $0/0 = 0$ for the quantity $(\lambda - \kappa)/(\alpha \beta)$
  appearing in the definition of the norm.
  We want to show
  \[ \no{\Delta_h \partial_t^{\kappa} Bf }_{q} \longrightarrow 0, \quad h \longrightarrow 0 .\]
  By Proposition \ref{prop:tounweighted} $\no{\Delta_h \partial_t^{\kappa} Bf }_{q} \le C \no{d^{1-\beta} \Delta_h \partial_t^{\kappa} Bf }_{p} $.
  Let $ \epsilon > 0 $.
  By the a priori bounds,
  we can find a test function $ f'$
  such that for all $0 \le \lambda \le \kappa$
  \[
  \sup_{t} \left( \no{ f - f' }_{W^{-1,p,-1,\kappa,\alpha \beta,0}(t,\Omega)} 
  + \no{ \partial_t^{\lambda} Bf -\partial_t^{\lambda} Bf'}_{L^{q}} \right)
  < \epsilon,
  \]
  the first term coming from the definition of the space,
  the second coming from the definition of $\partial_t^{\lambda} B $ as extension from a dense subspace. 
  We also used the fact  
\[
W_0^{0,p,-1,\kappa,\infty,\beta - 1}(t,\Omega) = W_0^{0,p,-\infty,\kappa,\infty,\beta - 1}(t,\Omega) .
\]
 
We use \eqref{eqthm:negattime1} an the remark after Theorem \ref{thm:negat} to compute
\begin{multline*}
  \no{\Delta_h \partial_t^{\kappa} Bf}_q
  \le
  \no{\Delta_h Bf'}_q +
  \no{\Delta_h \partial_t^{\kappa} (Bf-Bf')}_q \\
  < C   \no{\Delta_h f'}_{W_0^{-1,p,-1,\kappa,\alpha \beta,0}(t,\Omega)} + 2\epsilon.
\end{multline*}
Here the norm on the right hand side is bounded by
\[
 \no{\Delta_h f}_{W_0^{-1,p,\infty,\kappa,\alpha \beta,0}(t,\Omega)} + 
    \no{\Delta_h (f-f')}_{W_0^{-1,p,\infty,\kappa,\alpha \beta,0}(t,\Omega)} 
\le o(1) + 2\epsilon
\]
Choosing $h$ small enough, 
we see that $\no{\Delta_h \partial_t^{\kappa} Bf}_q < \epsilon$ and the proof is complete.
\end{proof}

\begin{corollary}
Let $\beta \in (0,1] $, $ k \ge 1 $ and 
$ 1<p<\infty $.
Let $ \Omega $ be a $ C^{0,\beta} $ domain.

If $f$ is a limit of functions in $ C_{0,smz}^{\infty}(\Omega)$ and 
\[ \sup_{t} \no{ \Delta_h f}_{W^{k,p,\beta,0,\infty,0}(t,\Omega)} = o(1), \] 
then 
\[
 Bf \in C(0,T; W^{k+1,q}(\rn))
\]  
for all $q > 0$ such that 
\[
\frac{(1+\varepsilon)(1- \beta)}{\theta}  \le  \frac{1}{q} - \frac{1}{p}
\]
for some $\varepsilon > 0$.
 
If in addition $ \beta = 1 $ and if
$f$ is a limit of $C_{0,smz}^{\infty}(\Omega)$ functions
with $f \in C(0,T;\dot{W}^{k,p}(\rn))$,
then $ Bf \in C(0,T; W^{k+1,p}(\rn)) $.
\end{corollary}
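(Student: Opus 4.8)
The plan is to follow the proof of Corollary~\ref{cor:timecont1}, with the negative-order bound \eqref{eqthm:negattime1} replaced by the high-order spatial bound \eqref{eqthm:highso} of Theorem~\ref{thm:high}. Fix a time $t$ and a ball $B\subset\rn$ containing every slice $\Omega_s$. Each function $w$ of the form $Bg(t,\cdot)$ or $\Delta_hBg(t,\cdot)=Bg(t+h,\cdot)-Bg(t,\cdot)$ occurring below is a limit of functions supported in $B$, so iterated Poincar\'e gives $\no{w}_{W^{k+1,q}(\rn)}\lesssim\no{w}_{\dot W^{k+1,q}(\rn)}$ with a constant depending only on $B$. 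The hypothesis $(1+\varepsilon)(1-\beta)/\theta\le 1/q-1/p$ is equivalent to the strict inequality $(1-\beta)/\theta<1/q-1/p$ (so in particular $q\le p$), hence Proposition~\ref{prop:tounweighted}, applied to $\nabla^{k+1}w$ with weight exponent $\beth=1-\beta$ and with $\Omega_t$ satisfying \eqref{eq:defthetaintro}, gives $\no{w}_{\dot W^{k+1,q}(\rn)}\le C\no{d(t,\cdot)^{1-\beta}\nabla^{k+1}w}_p$. Composing these reductions with the $\kappa=0$ instance of \eqref{eqthm:highso} yields $\no{Bf(t,\cdot)}_{W^{k+1,q}(\rn)}\le C\no{f(t,\cdot)}_{W^{k,p,\beta,0,\infty,0}(t,\Omega)}$ with $C$ independent of $t$; as in Corollary~\ref{cor:timecont1}, the hypotheses on $f$ make this right-hand side finite, so $Bf(t,\cdot)\in W^{k+1,q}(\rn)$ and $B$ extends continuously from $C_{0,smz}^{\infty}(\Omega)$ to the admissible data.

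For the continuity in time I would fix $t$ and $\epsilon>0$ and, using the density hypothesis and the a priori bound above, pick $f'\in C_{0,smz}^{\infty}(\Omega)$ with $\sup_s\no{(f-f')(s,\cdot)}_{W^{k,p,\beta,0,\infty,0}(s,\Omega)}<\epsilon$, hence also $\sup_s\no{(Bf-Bf')(s,\cdot)}_{W^{k+1,q}(\rn)}<C\epsilon$, so that $\no{\Delta_hBf(t,\cdot)}_{W^{k+1,q}(\rn)}\le\no{\Delta_hBf'(t,\cdot)}_{W^{k+1,q}(\rn)}+2C\epsilon$. Since $f'$ is compactly supported in $\Omega$, so is $Bf'$, and by uniform continuity of the boundary graphs there are $\delta_0>0$ and, for all small $|h|$, a compact $K\subset\Omega_t$ on which $d(t,\cdot)\ge\delta_0$ that contains $\supp\Delta_hBf'(t,\cdot)$. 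On $K$ the weight $d(t,\cdot)^{1-\beta}$ is bounded below, and since $q\le p$ we have $L^p(K)\hookrightarrow L^q(K)$, so with Poincar\'e $\no{\Delta_hBf'(t,\cdot)}_{W^{k+1,q}(\rn)}\lesssim\no{d(t,\cdot)^{1-\beta}\nabla^{k+1}\Delta_hBf'(t,\cdot)}_p$, which by the $\kappa=1$ instance of \eqref{eqthm:highso} (valid for compactly supported $f'$) is at most $C\no{\Delta_hf'(t,\cdot)}_{W^{k,p,\beta,0,\infty,0}(t,\Omega)}+o(1)$ as $h\to0$; finally $\no{\Delta_hf'(t,\cdot)}_{W^{k,p,\beta,0,\infty,0}(t,\Omega)}\le\no{\Delta_hf(t,\cdot)}_{W^{k,p,\beta,0,\infty,0}(t,\Omega)}+2\epsilon=o(1)+2\epsilon$. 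Letting $h\to0$ and then $\epsilon\to0$, with all constants independent of $t$, gives $Bf\in C(0,T;W^{k+1,q}(\rn))$.

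When $\beta=1$ the weight $d^{1-\beta}$ disappears, $q=p$ is admissible, and Proposition~\ref{prop:tounweighted} is not needed. Since the slices $\Omega_t$ are then Lipschitz, Hardy's inequality \eqref{eq:hardy_lipschitz} for negative powers applies, and — exactly as in the proof of Corollary~\ref{cor:highso_lip} — the right-hand side of \eqref{eqthm:highso} (with $\kappa\in\{0,1\}$) may be replaced by $C\no{\Delta_h^{\kappa}f(t,\cdot)}_{\dot W^{k,p}(\Omega_t)}$. I would then repeat the scheme of the previous two paragraphs with this replacement, now only pointwise in $t$: approximate $f$ by $f'\in C_{0,smz}^{\infty}(\Omega)$ with $\sup_s\no{(f-f')(s,\cdot)}_{\dot W^{k,p}(\Omega_s)}<\epsilon$, which controls the $f-f'$ term via the $\kappa=0$ estimate and Poincar\'e on $B$; for compactly supported $f'$ the $\kappa=1$ estimate gives $\no{\Delta_hBf'(t,\cdot)}_{\dot W^{k+1,p}(\rn)}\le C\no{\Delta_hf'(t,\cdot)}_{\dot W^{k,p}(\Omega_t)}+o(1)$ as $h\to0$; and since $f\in C(0,T;\dot W^{k,p}(\rn))$ we have $\no{\Delta_hf(t,\cdot)}_{\dot W^{k,p}(\Omega_t)}\le\no{f(t+h,\cdot)-f(t,\cdot)}_{\dot W^{k,p}(\rn)}\to0$. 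Letting $h\to0$ then $\epsilon\to0$ yields $Bf\in C(0,T;W^{k+1,p}(\rn))$.

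The only real difficulty I foresee is bookkeeping: ensuring that for all small $|h|$ the support of $\Delta_hBf'(t,\cdot)$ stays inside a fixed compact subset of $\Omega_t$ on which the distance weight is bounded below — this is what legitimizes passing from the weighted bound \eqref{eqthm:highso} to an unweighted one and then to the $L^q$ scale — together with keeping the $o(1)$ error terms of Theorem~\ref{thm:high} and Corollary~\ref{cor:highso_lip} uniform in $t$. No idea beyond those already used in Corollary~\ref{cor:timecont1} seems to be needed.
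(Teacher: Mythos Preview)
Your proposal is correct and follows essentially the same approach as the paper, whose proof reads in full: ``The argument is essentially the same as for Corollary~\ref{cor:timecont1}.'' You have faithfully spelled out that argument with the appropriate substitution of Theorem~\ref{thm:high} (and, for $\beta=1$, Corollary~\ref{cor:highso_lip}) in place of the negative-order bound, together with Proposition~\ref{prop:tounweighted} and Poincar\'e to pass to the unweighted $W^{k+1,q}$ scale; the extra care you take with the support of $\Delta_hBf'$ is harmless bookkeeping that the paper leaves implicit.
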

\begin{proof}
The argument is essentially the same as for Corollary \ref{cor:timecont1}. 
\end{proof}

\begin{corollary}
Let $ \Omega $ be a $ C^{\alpha,\beta,\theta} $ domain and assume that $ 1<p<\infty $.
If $f$ is a limit of functions in $ C_{smz}^{\infty}(\Omega)$,
\[
 \partial_t f,\ d^{-1/(\alpha \beta)} f  \in  L^{p}(\Omega).
\]
Then 
\[
  \nabla Bf \in C^{1-1/p}(0,T;L^{q}(\rn))
\] 
for all $q > 0$ such that 
\[
\frac{(1+\varepsilon)(1- \beta)}{\theta}  \le \frac{1}{q} - \frac{1}{p}
\]
for some $\varepsilon > 0$.
\end{corollary}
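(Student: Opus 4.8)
The claim concerns time regularity, and the plan is to deduce it from an a priori bound on $\partial_{t}\nabla Bf$ in $L^{p}(0,T;L^{q}(\rn))$ combined with the elementary fact that a function with time derivative in $L^{p}(0,T;X)$ is $(1-1/p)$-H\"older in time with values in the Banach space $X$. We take $X=L^{q}(\rn)$ and $g(t):=\nabla Bf(t,\cdot)$, extended by zero outside $\Omega_{t}$; for $f\in C_{smz}^{\infty}(\Omega)$ the function $Bf$ is smooth, so the derivatives commute and $g'(t)=\nabla\partial_{t}Bf(t,\cdot)$. Here $\alpha\beta>0$ is implicitly assumed, as required for $d^{-1/(\alpha \beta)}$ to be finite.

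First I would record the a priori bound. Running the Whitney-decomposition argument from the proof of Theorem~\ref{thm:bogovski_on_w1p} with one factor of $\nabla$ and one factor of $\partial_{t}$ at once --- bounding $\no{\nabla B_{Q_{j}}\partial_{t}f_{j}(t,\cdot)}_{p}$ by $\no{\partial_{t}f_{j}(t,\cdot)}_{p}$ via Proposition~\ref{prop:reference}, expanding $\partial_{t}f_{j}$ by the Leibniz rule with $|\partial_{t}\chi_{j}|\lesssim\delta(r_{j})^{-1}\lesssim r_{j}^{-1/\alpha}$, summing over the cubes $P_{j}^{*}$ using their bounded overlap and $d\sim r_{j}$ there, inserting \eqref{decopt} for $B'$, and then passing to the unweighted scale by Proposition~\ref{prop:tounweighted} --- one obtains, under the stated relation between $p$, $q$ and $\theta$, that for all $t$ and all test functions $f$
\[
\no{\partial_{t}\nabla Bf(t,\cdot)}_{L^{q}(\Omega_{t})}\le C\bigl(\no{\partial_{t}f(t,\cdot)}_{L^{p}(\Omega_{t})}+\no{d(t,\cdot)^{-1/(\alpha \beta)}f(t,\cdot)}_{L^{p}(\Omega_{t})}\bigr).
\]
Raising to the power $p$ and integrating over $t\in(0,T)$ bounds the right-hand side by $\no{\partial_{t}f}_{L^{p}(\Omega)}^{p}+\no{d^{-1/(\alpha \beta)}f}_{L^{p}(\Omega)}^{p}<\infty$, so $g'\in L^{p}(0,T;L^{q}(\rn))$; likewise \eqref{eqthm:sobolev} with $b=0$, Proposition~\ref{prop:tounweighted} and $\no{f(t,\cdot)}_{p}\lesssim\no{d(t,\cdot)^{-1/(\alpha \beta)}f(t,\cdot)}_{p}$ (valid because $d$ is bounded on $\Omega$) give $g\in L^{p}(0,T;L^{q}(\rn))$.

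Next I would run the embedding step. For $0\le s<t\le T$ one has $g(t)-g(s)=\int_{s}^{t}g'(\tau)\,d\tau$ as an $L^{q}(\rn)$-valued Bochner integral, whence by the triangle inequality and H\"older's inequality in $\tau$
\[
\no{g(t)-g(s)}_{L^{q}(\rn)}\le\int_{s}^{t}\no{g'(\tau)}_{L^{q}(\rn)}\,d\tau\le|t-s|^{1-1/p}\,\no{g'}_{L^{p}(0,T;L^{q}(\rn))}.
\]
Together with the bound on $\sup_{t}\no{g(t)}_{L^{q}(\rn)}$ coming from the previous paragraph, this yields $g\in C^{1-1/p}([0,T];L^{q}(\rn))$, with norm controlled by the data. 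Finally, the passage from test functions to a general $f$ is by density exactly as in the proof of Corollary~\ref{cor:timecont1}: by the two displayed estimates $f\mapsto\nabla Bf$ is a bounded linear map from $C_{smz}^{\infty}(\Omega)$, normed by $\no{\partial_{t}\,\cdot\,}_{L^{p}(\Omega)}+\no{d^{-1/(\alpha \beta)}\,\cdot\,}_{L^{p}(\Omega)}$, into $C^{1-1/p}([0,T];L^{q}(\rn))$, hence it extends continuously to the closure of $C_{smz}^{\infty}(\Omega)$ in that norm, and this extension agrees with $\nabla B$ already defined on the same closure through the underlying weighted $L^{p}$ estimates.

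I expect the only genuinely non-routine point to be the mixed space--time a priori bound above, but this requires nothing beyond combining the spatial-gradient and the time-derivative parts of the proof of Theorem~\ref{thm:bogovski_on_w1p}; the Bochner-space embedding and the limiting argument are entirely standard.
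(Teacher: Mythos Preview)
Your proposal is correct and matches what the paper has in mind. The paper gives no explicit proof for this corollary, but your three-step scheme—derive the mixed a priori estimate $\no{d^{1-\beta}\partial_t\nabla Bf(t,\cdot)}_p\lesssim\no{\partial_t f(t,\cdot)}_p+\no{d^{-1/(\alpha\beta)}f(t,\cdot)}_p$ by rerunning the Whitney argument of Theorem~\ref{thm:bogovski_on_w1p} with one $\nabla$ and one $\partial_t$, pass to $L^q$ via Proposition~\ref{prop:tounweighted}, then apply the one-dimensional Morrey embedding $W^{1,p}(0,T;X)\hookrightarrow C^{1-1/p}([0,T];X)$ and close by density as in Corollary~\ref{cor:timecont1}—is exactly the route the surrounding text suggests, and your verification of the mixed estimate (which is the case $k=0$, $\kappa=1$ not literally covered by \eqref{eqthm:highsot}) is sound.
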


\section{Pressure estimates by duality}
\label{sec:pressureduality}
In the remaining sections, 
we demonstrate how the inverse of the divergence can be used to provide pressure estimates
for the Navier--Stokes system from Subsection \ref{sec:introfluid}.
We study weak and even very weak solutions.
The gradient of the pressure is then merely a distribution,
and its regularity is expressed most naturally in terms of duality. 
The time independent version of the relevant duality statement 
is known as Lions--Ne\v{c}as negative norm theorem.
In the following subsection, 
we produce an analogous statement in the context of pressure estimates.

\subsection{Negative norm theorem}
The choice of the set of test functions in the weak formulation of the equation is important here.
While Definition~\ref{def:fluid_intro} 
only asks the equation to be valid for compactly supported test-functions, 
the set of test-functions 
can be much larger for instance
in the case of outflow boundary conditions or in the case of fluid-structure interactions. 

As will be seen below, 
the pressure can always be reconstructed up to a function only depending on time. 
We will construct the pressure in the dual space of functions for which 
\[ \int_{\partial\Omega_t} \psi(t,x) \cdot \nu(t,x)\, d\mathcal{H}^{n-1}(x)=0 \] 
holds in a suitable weak sense.
Here $\nu$ is the outer normal of the time-slice $\Omega_t$.
It need not be defined in any classical sense,
but we can define the class of test functions with the property above 
by 
\begin{align*}
C^\infty_{\pi^*}(\Omega; \rn )= \left \lbrace \psi\in C^\infty(\R^{1+n}; \rn ): \int_{\Omega_t} \dive  \psi(t,x) \, dx = 0, \ t\in (0,T) \right \rbrace
\end{align*}
and use the subscript $\pi^{*}$ in accordance with the conventions in Subsection \ref{sec:function}
to define the spaces 
\[
L^{a}W_{\pi^{*}}^{\aleph}
\]
as the closure of $C^\infty_{\pi^*}(\Omega; \rn )$ with respect to the relevant norms.

This set-up allows us to reconstruct the pressure up to its mean value in space.
It is noteworthy
that the mean value of the pressure is often an invariant of a solution,
which has to be given as an additional information.
We can include the full family of impermeable boundary conditions, 
where $v(t,x)\cdot \nu(t,x)=0$ whenever $x \in\partial\Omega_t$.
Accordingly we generalize Definition~\ref{def:fluid_intro} by assuming that 
\begin{center}
  $\la{\Lambda ,\psi} = \la{s_0 ,\psi}$ for divergence free test functions
\end{center}
where $\Lambda$ is the sum of the distributions appearing on the left hand side 
of the equation
and $s_0 \in C^{\infty}(\Omega;\rn)^{*}$ is a functional with $\la{s_0 ,\psi}=0$ if 
\[
\psi\in C^\infty (\Omega; \rn ) \cap W_{0}^{1,1}(\Omega;\rn).
\]
We then aim to solve
\[
\nabla \pi=\Lambda
\]
for $\pi$.

The statement below can be seen as a generalized negative norm theorem.
The reader is advised to remember the notation for $L^{a}W^{\aleph}$ from 
Section \ref{sec:function} before reading further.
\begin{lemma}
  \label{lem:distribution}
Let $\alpha, \beta \in [0,1]$ and $\Omega$ be a $ C^{\alpha,\beta,\theta}$ domain.
Consider a vector-space of test functions $A$ with $C_{0}^{\infty}(\Omega; \rn )\subset A \subset C^{\infty}_{\pi^*}(\Omega; \rn )$. 
In case $C_{0}^{\infty}(\Omega; \rn ) \ne A$,
consider an additional functional $ s_0 \in A^*\subset C_{0}^{\infty}(\Omega; \rn )^{*}$ 
with $\la{s_0,\psi}=0$ for all $\psi\in C_{0}^{\infty}(\Omega; \rn )$.

If $ \Lambda_i \in A^*$ are such that 
\[
 \la{\Lambda ,\psi} = \la{s_0 ,\psi}, \quad \sum_{i=1}^{m} \Lambda_i = \Lambda
\]
for all divergence free test functions $ \psi \in A $,
then there exist $\pi \in C^{\infty}(\Omega)^{*}$
such that for all $ \psi \in A \subset C^{\infty}_{\pi^*}(\Omega;\rn) $
\[
\la{ \pi, \dive  \psi} =  \la{s_0 ,\psi} - \la{\Lambda, \psi },
\]
that is, $ \Lambda_i = \nabla \pi_i $ in the weak sense and $\pi = \sum_{i=1}^{m} \pi_i$.

Given $i$,
let $\kappa \ge 0$, $1< q \le \infty$, $1 < p < \infty$.
Let $\theta \in (0,1]$ 
be such that the assumptions of Proposition \ref{prop:tounweighted} hold.
Assume that $\Lambda_i = \Lambda_i' \circ \partial_t^{\kappa}$.
Then we can arrange the construction to make the following four estimates valid.
\begin{enumerate}
  \item Let $b \in \R$.
Define  
\[
\Theta = \begin{cases}
  1, & \quad  \trm{if $ b \le 0$}\\
  \beta, & \quad \trm{if $0 < b  \le 1$ and $ p < (p-1) /\beta$}\\
  \beta^{2}, & \quad  \trm{if $1 <  b < (p-1)/\beta$ or $0<b <(p-1)/\beta \le p$}
\end{cases}.
\]
If any of the three cases applies to $b$, then 
  \[
  \no{\pi_i}_{L^{q}W_0^{0,p,\infty,-\kappa,- \alpha\beta, \Theta  b}(\Omega)} \le C \no{\Lambda_i'}_{L^{q}W^{0,p,\infty,0,\infty,\beta(  b-1)} (\Omega)}.
  \] 
  \item Let $ k \ge 1 $.
  Then 
  \[
  \no{\pi_i}_{L^{q}W_0^{-k+1,p,-\beta,-\kappa,-\alpha\beta,0}(\Omega)}
  \le C \no{\Lambda_i'}_{L^{q}W^{-k,p,\infty,0,\infty,1-\beta}(\Omega)}.
  \] 
  \item Let $  k \le 0 $ and $ b \ge 0 $. Then
  \begin{align*}
    \no{\pi_i}_{L^{q}W^{-k+1,p,\infty,-\kappa,\infty,-b}(\Omega )} 
&\le C \no{\Lambda_i'}_{ L^{q}W^{-k,p,1,0,\infty,\kappa/\alpha +1 - b\beta }(\Omega)},\\
    \no{\pi_i}_{L^{q}W^{-k+1,p,\infty,-\kappa,-\alpha \beta,0}(\Omega)} 
&\le C \no{\Lambda_i'}_{ L^{q}W^{-k,p,1,0,\infty, 1-\beta }(\Omega)}. 
  \end{align*}  
  \item Let $k \le 0$. Then we can write
  \[
\pi_i = \sum_{\lambda=0}^{\kappa} \pi_i^{\lambda}
\]
with 
\[
\no{  \pi_i^{\kappa - \lambda} \circ \partial_t^{\lambda - \kappa} }_{X_{\lambda}}
  \le C  \no{ \Lambda_i'  }_{Y_{\lambda}}
\]
where each $(X_{\lambda}, Y_{\lambda})$ can be set freely to either
\[
(L^{q_{\lambda}}W^{-k_{\lambda},p_{\lambda},\infty,0,\infty,0}(\Omega), 
L^{q_{\lambda}}W^{-k_{\lambda},p_{\lambda},\infty,0,\infty,-\beta+\lambda/\alpha}(\Omega))
\]
or
\[
(L^{q_{\lambda}}W^{-k_{\lambda}+1,p_{\lambda},\infty,0,\infty,0}(\Omega) , L^{q_{\lambda}}W^{-k_{\lambda},p_{\lambda},\infty,0,\infty,1- \beta +\lambda/\alpha }(\Omega)).
\]
with $k_{\lambda} \le 0$ and $1< p_{\lambda} < \infty$.
\end{enumerate}
The constants $C$ only depend on the domain and the parameters of the function spaces.
\end{lemma}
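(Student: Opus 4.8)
The plan is to obtain the pressure by precomposing the given functionals with the Bogovskij operator $B$ of Theorem~\ref{thm:bogovski_on_w1p} and then to read each of the four estimates off the corresponding family of a priori bounds for $B$, its adjoint and their time derivatives, in dual form. The only structural ingredient beyond $B$ is the elementary fact that, after removing its solenoidal part $\psi - B\dive\psi$, every admissible test field is reconstructed from its divergence by $B$; this is what turns a bound on $B$ into a bound on $\nabla\pi$. Concretely, for $g \in C^\infty(\Omega)$ I write $g = g_0 + \overline g$, where $\overline g(t) = |\Omega_t|^{-1}\int_{\Omega_t} g(t,x)\,dx$ depends on $t$ only and $g_0 \in C_{smz}^\infty(\Omega)$. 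Then $\dive Bg_0 = g_0$, and since $g_0$ has slice mean zero we have $Bg_0 \in C^\infty_{\pi^*}(\Omega;\rn)$ with zero spatial boundary values by the last assertion of Theorem~\ref{thm:bogovski_on_w1p}; if $g_0$ is moreover compactly supported, then $Bg_0 \in C^\infty_0(\Omega;\rn) \subset A$. Define $\la{\pi_i, g} := -\la{\Lambda_i, Bg_0}$ on the dense class of compactly supported smooth $g$, extend $\pi_i$ to annihilate functions of $t$ alone, and set $\pi := \sum_i \pi_i$. The extension to all of $C^\infty(\Omega)^*$, and to the function spaces occurring in the estimates, is then forced by the a priori bounds below.

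\emph{The equation.} Let $\psi \in A$. Then $\dive\psi$ is slice mean zero, the field $\psi - B\dive\psi$ is divergence free and again lies in $A$, and $B\dive\psi$ has zero spatial boundary values, so $\la{s_0, B\dive\psi} = 0$ and $\la{\Lambda - s_0,\, \psi - B\dive\psi} = 0$ by hypothesis. Therefore
\[
\la{\pi, \dive\psi} = -\la{\Lambda, B\dive\psi} = -\la{\Lambda, \psi} + \la{\Lambda, \psi - B\dive\psi} = -\la{\Lambda, \psi} + \la{s_0, \psi - B\dive\psi} = \la{s_0,\psi} - \la{\Lambda, \psi},
\]
which is the asserted identity; the splitting $\pi = \sum_i \pi_i$ with $\pi_i \leftrightarrow \Lambda_i$ realizes the claimed decomposition $\Lambda_i = \nabla\pi_i$ in the weak sense.

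\emph{The estimates.} Fix $i$ and write $\Lambda_i = \Lambda_i' \circ \partial_t^\kappa$. Each target norm on the left of (1)--(4) is, by the definitions of Section~\ref{sec:function}, a dual norm; testing $\pi_i$ against the relevant class of $g$ (without the compact-support restriction, in accordance with the subscript $0$) gives $\la{\pi_i, g} = -\la{\Lambda_i', \partial_t^\kappa Bg_0}$, hence $|\la{\pi_i, g}| \le \no{\Lambda_i'}_{Y}\,\no{\partial_t^\kappa Bg_0}_{Y'}$ with $Y$ the space on the right of the estimate and $Y'$ its associated dual, both read off from Section~\ref{sec:function}. It then remains to bound $\no{\partial_t^\kappa Bg_0}_{Y'}$ by the $X$-norm of $g$, and this is exactly one of the bounds already established: estimate~(1), a weighted-$L^p$ bound on $\pi_i$ itself, comes from the weighted $L^p$ estimates \eqref{eqthm:lpweight} and \eqref{eqthm:time} of Theorem~\ref{thm:bogovski_on_w1p} (equivalently, from the adjoint bounds of Proposition~\ref{prop:adjointtrivial}), and its $\Theta$-trichotomy is inherited verbatim; estimates~(2) and~(3), which trade one Sobolev derivative for a negative-order target, come from the higher-order bound \eqref{eqthm:highsot} of Theorem~\ref{thm:high} together with the negative-order bound \eqref{eqthm:negattime1} of Theorem~\ref{thm:negat}, using the two admissible weight placements recorded in the remark following Theorem~\ref{thm:negat} and (for $k \le 0$) one further application of Poincar\'e's inequality; estimate~(4) comes from the sum-space reformulation of Proposition~\ref{prop:new}, which directly yields the splitting $\pi_i = \sum_\lambda \pi_i^\lambda$ with each piece controlled in the freely chosen pair $(X_\lambda, Y_\lambda)$. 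The passage to unweighted norms and the attendant restriction on $\theta$ are supplied by Proposition~\ref{prop:tounweighted}, and all constants depend only on the domain and the function-space parameters because those in the Bogovskij bounds do.

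\emph{Main obstacle.} The conceptual core -- precomposition with $B$, the solenoidal correction $\psi - B\dive\psi$, and the vanishing of $s_0$ on the range of $B$ -- is short. The bulk of the work, and the main source of error, is bookkeeping: identifying each six-index $\aleph$-norm of Section~\ref{sec:function} and its dual correctly (including the harmless identifications of $\pm\infty$ for a distortion index attached to a vanishing smoothness index, and the time shift relating $\Lambda_i$ to $\Lambda_i'$), matching each of (1)--(4) to the correct member among the several weighted bounds for $B$ and its adjoint, and checking that the relevant $Bg_0$ is an admissible element of $Y'$ so that the duality step is legitimate. A secondary, routine point is the density argument passing from compactly supported $g$, where $Bg_0$ is a genuine compactly supported test field in $A$, to the full function spaces.
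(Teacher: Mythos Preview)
Your proposal is correct and follows essentially the same route as the paper: define $\pi_i = -\Lambda_i \circ B$ on slice-mean-zero test functions, verify the identity via the solenoidal correction $\psi - B\dive\psi$ together with the vanishing of $s_0$ on the zero-trace range of $B$, and then read off each of (1)--(4) by duality from the corresponding mapping properties of $\partial_t^\kappa B$ established in Theorem~\ref{thm:bogovski_on_w1p}, Theorem~\ref{thm:high}, Theorem~\ref{thm:negat} and Proposition~\ref{prop:new}, respectively. The only cosmetic difference is that the paper names the final extension step explicitly as Hahn--Banach, whereas you phrase it as being ``forced by the a priori bounds''; the content is the same.
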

\begin{proof}
We study test functions $ \varphi \in C_{\pi^{*}}^{\infty}(\Omega;\rn) $
with 
\[
\sup_{t} \abs{\int \dive\varphi(t,x) \, dx}  = 0.
\]
Then $ \varphi - B \dive \varphi $ is divergence free 
and $ B \dive \varphi $ has zero boundary values.
We define 
\begin{equation*}
\la{\pi_i, \psi} :=  -\la{\Lambda_i, B \psi}, \quad
\Lambda := \sum_{i=1}^{m} \Lambda_i ,
\end{equation*}
for all test functions $\psi \in C^{\infty}(\Omega)$ with 
\[
\sup_{t} \abs{\int_{\Omega_t} \psi(t,x) \, dx} = 0,
\]
and consequently every $ \varphi \in C_{\pi^{*}}^{\infty}(\Omega;\rn) $ satisfies
\begin{multline*}
  \sum_{i=1}^{m} \la{ \pi_i, \dive \varphi}
  =  -\la{\Lambda, B \dive \varphi} 
  =  -\la{\Lambda, \varphi} + \la{\Lambda, \varphi - B \dive \varphi } \\
  =  -\la{\Lambda, \varphi} + \la{s_0 ,\varphi - B \dive \varphi }
  =  -\la{\Lambda, \varphi} + \la{s_0 ,\varphi}
\end{multline*}
where the last equality used that $ B \dive \varphi $ has zero boundary values.

By Theorem \ref{thm:high},
restricting the attention to functions with mean zero 
on all time slices,
$ \partial_t^\kappa B $ is a bounded linear operator
\begin{align*}
L^{q}W^{l,p,\beta,\kappa,\alpha\beta,0} &\longrightarrow
L^{q}W_0^{l+1,p,\infty,0,\infty,\beta-1}, \quad l \ge 0 \\
L^{q}W^{0,p,\infty,\kappa,\alpha \beta,-b} &\longrightarrow
L^{q}W_0^{0,p,\infty,0,\infty,\beta(1-b) }, \quad b \le 0
\end{align*}
and by Theorem \ref{thm:negat} $\partial_t^{\kappa} B$ is bounded
\begin{align*}
L^{q}W_0^{k,p,\infty,\kappa,\infty,b}
&\longrightarrow L^{q}W_{0}^{k+1,p,-1,0,\infty,-\lambda/\alpha- 1 + \beta b}, \quad   k < 0. \\
L^{q}W_0^{k,p,\infty,\kappa,\alpha \beta,0}
&\longrightarrow L^{q}W_{0}^{k+1,p,-1,0,\infty,\beta-1}, \quad   k < 0.
\end{align*}
Writing, $ \pi_i = - \Lambda_i \circ B $,
we conclude that $\pi$
satisfies the claimed bounds in (1), (2) and (3).

To prove (4),
we use the proof of Proposition \ref{prop:new}.
As $\pi_i = - \Lambda_i' \circ \partial_t^{\kappa} \circ B$,
we can use the decomposition from the short proof of Proposition \ref{prop:new}
to define 
\[
\pi_i^{\kappa- \lambda} = \Lambda_i' \circ (B_{\lambda}^{*})^{*} \circ \partial_t^{\kappa - \lambda}.
\]
Then the norm estimates claimed here are dual to the ones shown in the proof of Proposition \ref{prop:new}.
Hence we have completed the proof of (4).

Finally,
by the Hahn--Banach theorem one may extend every $\pi_i$ to act on all functions from $C^{\infty}(\Omega)$ satisfying the relevant bounds.
\end{proof}

\begin{remark}
  The construction leaves the question of the mean value of the pressure open.
  This additional data can be fixed,
  but it will have an effect on the time regularity of the pressure.
  For instance,
  if the pressure term $\pi_i$ is wanted to have mean value $m_i(t)$,
  we can take a pressure term from above and define a corrected pressure term 
  \[
  \tilde{\pi}_i = \pi_i - \la{\pi_i(t, \cdot), 1}/|\Omega_t| + m_i(t)/|\Omega_t|
  \]
  so that $\la{\tilde{\pi}_i(t,\cdot),1} = m(t)$.
  As the modification terms are space independent,
  the new pressure term $\tilde{\pi}_i$ satisfies the equations,
  but its regularity in time variable is affected by the desired mean value $m_i(t)$ as well as the volume of the domain,
  both of which can be non-trivial contributions. 
\end{remark}

The connection to the Lions--Ne\v{c}as theorem is the content of 
the following Proposition.

\begin{proposition}
  Assume that $\Omega$ is $C^{\alpha,\beta,\theta}$ with $\partial_t | \Omega_t | = 0$.
  Then for all $k, \kappa \ge 0$ and $1<p,q<\infty$, 
  we have that for all $f \in C_0^{\infty}(\Omega)$ with mean value zero at every time slice
  \begin{multline*}
  \no{\nabla f}_{L^{q}W_0^{-k,p,-\beta,-\kappa,-\alpha\beta,0}(\Omega)}
  \le C \no{f}_{L^{q}W_0^{-k+1,p,-\beta,-\kappa,-\alpha\beta,0}(\Omega)} \\
  \le C \no{\nabla f \circ \partial_t^{-\kappa}}_{L^{q}W^{-k,p,\infty,0,\infty,1-\beta}(\Omega)}.
  \end{multline*}
\end{proposition}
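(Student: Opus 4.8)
The asserted chain splits into two inequalities that I would handle by entirely different means: the left one is a routine duality estimate, while the right one is the genuine content — it is the time-dependent Lions--Ne\v{c}as theorem, and the operator $B$ enters there.

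For the \emph{left inequality} I would argue purely by duality. Recall from Section~\ref{sec:function} that $W_0^{-k,p,-\beta,-\kappa,-\alpha\beta,0}(t,\Omega)$ and $W_0^{-k+1,p,-\beta,-\kappa,-\alpha\beta,0}(t,\Omega)$ are the duals, taken over not-necessarily compactly supported test functions, of $W^{k-1,p',\beta,\kappa,\alpha\beta,0}(t,\Omega)$ and $W^{k,p',\beta,\kappa,\alpha\beta,0}(t,\Omega)$ respectively. For a vector field $g$ in the unit ball of the latter, integration by parts — with no boundary term since $f$ is compactly supported — gives $\la{\partial_t^{\lambda}\nabla f, g}=-\la{\partial_t^{\lambda} f,\dive g}$, and the integrals defining $\no{\dive g}_{W^{k-1,p',\beta,\kappa,\alpha\beta,0}(t,\Omega)}$ form, with identical distance weights, a sub-collection of those defining $\no{g}_{W^{k,p',\beta,\kappa,\alpha\beta,0}(t,\Omega)}$; hence $\no{\dive g}_{W^{k-1,p',\beta,\kappa,\alpha\beta,0}}\le C\no{g}_{W^{k,p',\beta,\kappa,\alpha\beta,0}}$. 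Taking the supremum over $g$ and then using $L^{q}$--$L^{q'}$ duality in the time integral yields the left inequality; the hypothesis $\partial_t\ab{\Omega_t}=0$ is not needed here.

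For the \emph{right inequality} I would invoke Lemma~\ref{lem:distribution}(2) with $A=C_0^\infty(\Omega;\rn)$ (so no functional $s_0$ is present), $m=1$, $\Lambda=\nabla f$ and $\Lambda'=\nabla f\circ\partial_t^{-\kappa}$. The compatibility hypothesis $\la{\Lambda,\psi}=0$ for solenoidal $\psi\in C_0^\infty(\Omega;\rn)$ holds since $\la{\nabla f,\psi}=-\la{f,\dive\psi}=0$, and $\theta$ is the exponent coming with the domain, so Lemma~\ref{lem:distribution}(2) produces $\pi\in C^\infty(\Omega)^*$ with $\la{\pi,\dive\psi}=\la{f,\dive\psi}$ for all $\psi\in C_0^\infty(\Omega;\rn)$ and $\no{\pi}_{L^{q}W_0^{-k+1,p,-\beta,-\kappa,-\alpha\beta,0}(\Omega)}\le C\no{\nabla f\circ\partial_t^{-\kappa}}_{L^{q}W^{-k,p,\infty,0,\infty,1-\beta}(\Omega)}$. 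Unwound, this is exactly the bound one gets by testing $f$ against a slice-mean-zero test function $g$, writing $g=\dive_x Bg$ via Theorem~\ref{thm:bogovski_on_w1p}, integrating by parts to get $\la{f,g}=-\la{\nabla f,Bg}$, using $Bg=\partial_t^{-\kappa}\partial_t^{\kappa}Bg$ (legitimate because $\Omega\subset(0,T)\times\rn$, so $Bg$ is supported in $\{t>0\}$) to move $\kappa$ time derivatives across the pairing, and estimating $\no{\partial_t^{\kappa}Bg}$ by Theorem~\ref{thm:high}.

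Finally I would pass from $\pi$ to $f$. Since $\la{\pi-f,\dive\psi}=0$ for all $\psi\in C_0^\infty(\Omega;\rn)$, the distribution $\pi-f$ annihilates every compactly supported test function with vanishing slice means and hence depends on $t$ alone; thus $f$ is the slice-mean-zero part of $\pi$. The hypothesis $\partial_t\ab{\Omega_t}=0$ is used precisely at this point: combined with the vanishing of the lateral flux (automatic for $f\in C_0^\infty(\Omega)$, and forced near the boundary for the finite-norm test functions by the negative distance powers weighting their lower-order time derivatives), it makes the slice-mean operation commute with $\partial_t$, so the normalization to slice mean zero is compatible with the time-regularity structure of the norm and the passage $\pi\mapsto f$ does not increase the $L^{q}W_0^{-k+1,p,-\beta,-\kappa,-\alpha\beta,0}(\Omega)$ norm. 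This gives $\no{f}_{L^{q}W_0^{-k+1,p,-\beta,-\kappa,-\alpha\beta,0}(\Omega)}\le\no{\pi}_{L^{q}W_0^{-k+1,p,-\beta,-\kappa,-\alpha\beta,0}(\Omega)}\le C\no{\nabla f\circ\partial_t^{-\kappa}}_{L^{q}W^{-k,p,\infty,0,\infty,1-\beta}(\Omega)}$, as claimed. The one genuinely delicate point — carried out inside Lemma~\ref{lem:distribution} and the weighted estimates it rests on — is the bookkeeping of the distance weights through the slice-mean subtraction together with the transfer of the $\kappa$ time derivatives; the rest is duality and Theorems~\ref{thm:bogovski_on_w1p} and~\ref{thm:high}.
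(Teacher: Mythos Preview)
Your proposal is correct and follows essentially the same approach as the paper: duality and integration by parts for the left inequality, and Lemma~\ref{lem:distribution}(2) together with the identification $\pi=f+c(t)$ and a mean-zero normalization for the right one. Your write-up is in fact more explicit than the paper's, which is terse here---you spell out where $\partial_t|\Omega_t|=0$ enters (the paper's proof does not mention it) and you unwind the content of Lemma~\ref{lem:distribution} in terms of $B$ and Theorems~\ref{thm:bogovski_on_w1p} and~\ref{thm:high}; you also implicitly correct what appears to be an index typo in the paper's display for the left inequality.
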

\begin{proof} 
  Consider a function $f \in C_0^{\infty}(\Omega)$
  with  
  \[
  \int_{\Omega_t} f(t,x) \, dx = 0
  \] 
  for all $t$.
  For test functions $\psi \in C^{\infty}(\Omega_t;\rn)$
\begin{multline*}  
  | \la{\nabla f, \psi } | 
  = | \la{f, \dive \psi} | \\
  \le  \no{f(t, \cdot) }_{L^{q}W_0^{-k,p,-\beta,-\kappa,-\alpha\beta,0}(\Omega)} \no{ \dive \psi(t, \cdot)}_{L^{q'}W^{k,p',\beta,\kappa,\alpha\beta,0}(\Omega)} .
\end{multline*}
  
To prove the reverse inequality,
we define $\pi$ on test functions with mean value zero 
through $\la{\pi, \psi} = \la{\nabla f, B \psi}$.
Then $\pi(t,\cdot) = f(t,\cdot) + c(t) $
and we may choose $\pi(t,x)$, 
now on all test functions, 
to be smooth and with mean value zero so that $c(t) = 0$.
The second item of Lemma \ref{lem:distribution} still applies.
We see that 
\[
  \no{f}_{L^{q}W_0^{-k+1,p,-\beta,-\kappa,\alpha\beta,0}(\Omega)}
  \le C \no{\nabla f \circ \partial_t^{-\kappa}}_{L^{q}W^{-k,p,\infty,0,\infty,\beta -1}(\Omega)}
\]
This finishes the proof.
\end{proof}
 
\subsection{Pressure estimates} 
\label{sec:fluidthms}
We turn to two concrete applications. 
The first one is on very weak solutions of the Dirichlet problem. 
The second one is about weak solutions with a slip boundary condition.
As was mentioned in the introduction, 
the existence of a velocity field $v$ is known in many cases \cite{neustupa09,NeuPan09}, 
but the pressure is commonly introduced only
as an abstract Lagrange multiplier, 
and it is not even included in the respective weak formulation in many cases. 

In accordance with the focus of the current manuscript,
we consider a pressure 
that is global in space but local in time.
As we consider local-in-time solutions,
we will not mention initial values among the boundary conditions here.
We actually consider solutions that are defined on the open interval $(0,T)$. 
In view of the Bogovskij estimates, 
respective global in time pressure estimates for Cauchy problems 
can also be deduced without further difficulty. 

First let us discuss the important case of Dirichlet boundary conditions
\begin{align*}
v(t) &=v_0(t) , \quad \trm{in $\partial \Omega_t$,}
\end{align*}
for almost every $t\in (0,T)$. 
Dirichlet boundary conditions are commonly defined 
via the choice of the function space in which the solutions are to be found. 
Consequently, Dirichlet boundary conditions are only well-defined for functions 
that are smooth enough.
That is, 
the boundary of the domain must have non-zero capacity relative to the relevant function space.
In view of the reconstruction of a pressure,
the Dirichlet boundary values of the velocity are practically not seen at all.
In particular, a pressure may be reconstructed under assumptions much weaker 
than $v\in L^1(0,T;W^{1,1}(\Omega_t))$. 
Hence we assume $v \in L_{loc}^{2}(\Omega)$ in what follows,
and $v$ may or may not satisfy a Dirichlet boundary condition in whichever form.

Define 
\begin{multline}
  \label{eq:lambdaterms}
  \Lambda_1(\psi) = -\la{ v,\partial_t \psi}, \quad
  \Lambda_2(\psi) = - \la{v \otimes v ,\nabla \psi},\quad 
  \Lambda_3(\psi) = - \mu \la{ v , \Delta \psi },\\
  \Lambda_4(\psi) = - \la{g ,\psi}, \quad 
  \Lambda_5(\psi) =  \la{F ,\nabla \psi}
\end{multline}
where the forces $f$ and $F$ are the given right hand sides of the equations, 
possibly just distributions.
Restating Definition \ref{def:fluid_intro},
the pair $(v,\pi)$ is called a local very weak solution to \eqref{eq:NS},
possibly coming from a Dirichlet boundary data,
if 
\begin{equation}
  \label{eq:dbc1}
 \sum_{i=1}^{5} \Lambda_i(\psi) = 0 
\end{equation}
holds for all $ \psi \in C_{sol,0}^{\infty}(\Omega;\rn) $,
and
\begin{align}
\label{eq:dbc3}
\la{\pi, \dive \psi} = - \sum_{i=1}^{5} \Lambda_i(\psi)\text{ and }\la{v,\nabla\psi}=0
\end{align}
hold for all $ \psi \in C_{0}^{\infty}(\Omega;\rn) $.
We can directly apply Lemma \ref{lem:distribution} to conclude the following statement.
\begin{theorem}
  \label{thm:fluid1}
  Let $v \in L_{loc}^{b}(\Omega)$ for $b>2$ satisfy \eqref{eq:dbc1}.
  Then there exist $\pi_i$, $i=1,\ldots, 5$,
  such that $(v, \sum_i \pi_i)$ is a very weak solution to \eqref{eq:dbc3}
  and the estimates from Lemma \ref{lem:distribution} apply.
\end{theorem}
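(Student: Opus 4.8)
The plan is to reduce the whole statement to Lemma~\ref{lem:distribution}, applied with the set of admissible test functions $A = C_0^{\infty}(\Omega;\rn)$ and, accordingly, with no extra functional $s_0$ (equivalently $s_0 = 0$). First I would verify the structural hypotheses of that lemma. Since $v \in L_{loc}^{b}(\Omega)$ with $b > 2$, we have $v \in L_{loc}^{1}(\Omega)$ and $v \otimes v \in L_{loc}^{b/2}(\Omega) \subset L_{loc}^{1}(\Omega)$, and $g, F \in L_{loc}^{1}(\Omega)$ by assumption; therefore each of the five functionals $\Lambda_i$ from \eqref{eq:lambdaterms} --- being the integration of one of these locally integrable fields against a compactly supported test function or finitely many of its derivatives --- is a well-defined element of $A^{*} = C_0^{\infty}(\Omega;\rn)^{*}$. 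The remaining hypothesis, that $\la{\Lambda,\psi} = \la{s_0,\psi} = 0$ with $\Lambda = \sum_{i=1}^{5}\Lambda_i$ for every divergence free $\psi \in A$, is exactly the weak momentum equation \eqref{eq:dbc1}. Lemma~\ref{lem:distribution} then produces functionals $\pi_i \in C^{\infty}(\Omega)^{*}$ whose sum $\pi := \sum_{i=1}^{5}\pi_i$ satisfies $\la{\pi, \dive\varphi} = \la{s_0,\varphi} - \la{\Lambda,\varphi} = -\sum_{i=1}^{5}\Lambda_i(\varphi)$ for all $\varphi \in C_0^{\infty}(\Omega;\rn)$, which is the first relation in \eqref{eq:dbc3}; the incompressibility $\la{v,\nabla\psi}=0$ is inherited from the hypotheses on the velocity field and is unaffected by the construction. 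Hence $(v,\pi)$ is a very weak solution in the sense of \eqref{eq:dbc3}.

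For the quantitative part I would put each $\Lambda_i$ into the form $\Lambda_i = \Lambda_i'\circ\partial_t^{\kappa_i}$ demanded by Lemma~\ref{lem:distribution} and read off the space occupied by $\Lambda_i'$. The only term carrying a time derivative is $\Lambda_1(\psi) = -\la{v,\partial_t\psi}$: here $\kappa_1 = 1$ and $\Lambda_1'(\psi) = -\la{v,\psi}$ is the zeroth order functional represented by $v$. For the other four terms $\kappa_i = 0$ and $\Lambda_i' = \Lambda_i$, and they are the first order functional $\dive(v\otimes v)$ (convective part, symbol in $L_{loc}^{b/2}$), the second order functional $\mu\Delta v$ (viscous part), the zeroth order functional $-g$ (force part) and the first order functional $\dive F$ (force part). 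From the integrability assumptions on $v$, $g$, $F$ one then locates each $\Lambda_i'$ in the weighted scale $L^{q}W^{\aleph}(\Omega)$ of Section~\ref{sec:function}, with distance weights whose exponents are dictated by how the dual pairing against $B$ has to be balanced and, whenever an unweighted conclusion is wanted, converted via Proposition~\ref{prop:tounweighted}. Finally each $\Lambda_i'$ is fed into the matching one of the four estimates of Lemma~\ref{lem:distribution}: case~(2) or~(3) for the negative-order inputs coming from $\Lambda_2',\Lambda_3',\Lambda_5'$, case~(1) or~(3) for the zeroth-order inputs from $\Lambda_1',\Lambda_4'$, and case~(4) when one prefers to split the time-derivative pressure $\pi_1$ into the sum over the $\kappa$ derivative levels that yields the two separate terms $\pi_{time}^{1}$ and $\pi_{time}^{2}$. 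Each application expresses $\pi_i = -\Lambda_i'\circ\partial_t^{\kappa_i}\circ B$ in the target space prescribed by the mapping properties of $\partial_t^{\kappa_i}B$ from Theorems~\ref{thm:high} and~\ref{thm:negat} and Proposition~\ref{prop:new}, and the weight-to-integrability conversions reproduce the concrete statements of Theorems~\ref{thmint:fluid3}, \ref{thmint:fluid4} and~\ref{thmint:fluid2}.

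I do not expect a serious obstacle. The one genuinely hard ingredient, namely the existence of $B$ together with the estimates for $\partial_t B$ in non-cylindrical space-time domains, has already been established in Theorems~\ref{thm:high} and~\ref{thm:negat}. What remains is essentially bookkeeping, and the two mildly delicate points are: that the formulas in \eqref{eq:lambdaterms} define each individual $\Lambda_i$ on all of $C_0^{\infty}(\Omega;\rn)$ --- not merely on solenoidal fields --- so that the decomposition $\Lambda = \sum_i\Lambda_i$ is legitimate as input to Lemma~\ref{lem:distribution} even though only the sum vanishes on divergence free test functions; and the careful tracking of weight exponents needed to match the hypothesis $v \in L_{loc}^{b}(\Omega)$ (or its weighted strengthenings) with the precise indices appearing in the four cases of Lemma~\ref{lem:distribution}.
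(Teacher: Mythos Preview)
Your proposal is correct and follows exactly the paper's approach: the paper does not give a separate proof of this theorem but simply states that it follows by direct application of Lemma~\ref{lem:distribution} (with $A = C_0^{\infty}(\Omega;\rn)$ and $s_0 = 0$), which is precisely what you do. Your write-up is more detailed than the paper's one-line remark, but the structure---verify the $\Lambda_i$ are well-defined distributions, invoke Lemma~\ref{lem:distribution} to produce $\pi_i = -\Lambda_i \circ B$, and then read off the estimates case by case---is identical.
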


Next we consider general impermeable boundaries. 
For the motivation,
we assume that the domain and the velocity of the fluid are given and smooth. 
Actual results are stated in an abstract framework.
We define $\nu(t,\cdot)$ as the spatial outer normal of $\Omega$ 
and $\tau^j$ as the $j$th tangential vector, $j=1,...,n-1$.
In the case of cylindrical domains, 
the impermeability condition reads (locally)
\[
v\cdot \nu=0\trm{ in $\partial \Omega_0$}.
\]
In the case of non-cylindrical domains, 
it means that the fluid moves to the normal direction as fast as the boundary of the domain.
We can use the local graph coordinates and assume that locally 
$\Omega_t\cap R=\{(x',x_n)\in R:0<x_n<\psi(t,x')\}$. 
Given a boundary point $x=(x',\psi(t,x'))$,
the impermeability condition then reads 
\begin{align}
\label{eq:sbc1}
v(t,x)\cdot \nu(t,x)=\partial_t\psi(t,x')\nu_n(t,x)=-\frac{\partial_t\psi(t,x')}{\sqrt{1+\abs{\nabla_x\psi(t,x')}}^2}.
\end{align}
The motion of the fluid in the tangential direction 
is neither affected by the impermeability condition, 
nor by
the stress forces depending on the pressure at the boundary. 
Hence we may allow the flow to change tangentially either freely or 
according to an external force acting on the fluid-stresses in tangential direction:
\begin{align}
\label{eq:sbc2}
(\nabla_{\mathrm{sym}}v\tau^j) \cdot\nu &=s_0^j , \quad \trm{in $\partial \Omega_t$},
\end{align}
where we generally assume $s_0\in C_0^{\infty}(\R^{n+1})^{*}$ 
with $s_0(\phi)=0$ for all $\phi\in C^\infty_c(\Omega)$.
The respective force $s_0$ would typically depend on the fluid-velocity itself, 
the force $F$ or the motion of the domain 
or any combination of the previously listed \cite{NeuPan09}.

Next we give a weak formulation.
We ask
\begin{align*}
  -\la{ v,\partial_t \psi} - \la{v \otimes v ,\nabla \psi}+ \mu \la{\nabla_{\mathrm{sym}}  v , \nabla \psi }
  -  \la{g ,\psi} + \la{F ,\nabla \psi} &= \sum_{j} \la{s_0^j,\psi}  \\
  \la{v, \nabla \psi} &= 0
\end{align*}
to hold for $\psi\in \Tcal_{\nu,sol}$, 
where
\begin{align*}
  \Tcal_\nu&:=\bigg\{\psi\in  C^\infty(\R^{n+1},\R^n):\int_{\Omega}\psi\cdot \nabla\phi\,dx\, dt 
  \\
  &\qquad=-\int_\Omega (\dive\psi) \phi\, d x d t\text{ for all }\phi\in C^\infty(\R^{n+1})\bigg\}.
\\
\Tcal_{\nu,sol}&:=\{\psi\in \Tcal_\nu:\dive\psi=0\}.
\end{align*}
We define $\Lambda_i' = \Lambda_i $ for $i \in \{1,2,4,5\}$ as in \eqref{eq:lambdaterms}
and we set 
\[\la{\Lambda_3',\psi} = - \mu \la{\nabla_{\textrm{sym}}v , \nabla \psi }.\]
The pair $(v,\pi)$ is a local solution to \eqref{eq:NS} 
with boundary conditions \eqref{eq:sbc1} and \eqref{eq:sbc2}
if 
\begin{equation}
  \label{eq:wnod}
 \sum_{i=1}^{5} \Lambda_i'(\psi) = \sum_{j} \la{s_0^j,\psi}
\end{equation}
holds for all $ \psi \in \Tcal_{\nu,sol}$
and
\[
\la{\pi, \dive \varphi} = - \sum_{i=1}^{5} \Lambda_i(\varphi) + \sum_{j} \la{s_0^j,\varphi}
\]
holds for all $ \varphi \in \Tcal_{\nu} $.
We can directly apply Lemma \ref{lem:distribution}
with $A = \Tcal_{\nu}$
to conclude the following statement.
\begin{theorem}
  Let $a,b > 1$ and $c > 2$.
  Let $v \in  L^{c}(\Omega;\R^n)$
  with $1_{\Omega} |\nabla v| \in L^{a}L^{b}(\Omega)$
  satisfy \eqref{eq:wnod}.
  Then there exist $\pi_i$, $i=1,\ldots, 5$,
  such that $(v, \sum_i \pi_i)$ is a solution
  and the estimates from Lemma \ref{lem:distribution} apply.
\end{theorem}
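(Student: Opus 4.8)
The plan is to verify that the hypotheses of Lemma~\ref{lem:distribution} are met with the choice $A = \Tcal_\nu$, $s_0 = \sum_j s_0^j$, and the functionals $\Lambda_1,\Lambda_2,\Lambda_3',\Lambda_4,\Lambda_5$, and then simply read off the conclusion. First I would check the inclusions $C_0^\infty(\Omega;\rn)\subset \Tcal_\nu \subset C_{\pi^*}^\infty(\Omega;\rn)$. The left inclusion is the divergence theorem for compactly supported fields. For the right inclusion, testing the defining identity of $\Tcal_\nu$ against functions $\phi=\phi(t)$ depending on time only forces $\int_{\Omega_t}\dive\psi(t,x)\,dx=0$ for a.e.\ $t$, which is exactly the defining property of $C_{\pi^*}^\infty$. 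Since by assumption $s_0^j\in C_0^\infty(\R^{n+1})^{*}$ with $\la{s_0^j,\phi}=0$ for $\phi\in C_c^\infty(\Omega)$, the sum $s_0$ restricts to an element of $A^*$ that annihilates $C_0^\infty(\Omega;\rn)$, matching the hypothesis of the lemma on the extra functional.

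The main step is to exhibit each of the five functionals as a bounded functional on $\Tcal_\nu$ of the form $\Lambda_i'\circ\partial_t^{\kappa_i}$ with $\Lambda_i'$ lying in one of the spaces $L^{q}W^{\aleph}(\Omega)$ from Section~\ref{sec:function}, so that the four families of estimates in Lemma~\ref{lem:distribution} become applicable. For $\Lambda_1(\psi)=-\la{v,\partial_t\psi}$ I would take $\kappa_1=1$ and use $v\in L^{c}(\Omega)=L^{c}W^{0,c,\infty,0,\infty,0}(\Omega)$; for $\Lambda_2(\psi)=-\la{v\otimes v,\nabla\psi}$ I would use $v\otimes v\in L^{c/2}(\Omega)$, which is where $c>2$ enters, so that $\Lambda_2$ is, modulo a spatial divergence, an element of the negative order space $L^{c/2}W^{-1,c/2,\infty,0,\infty,0}(\Omega)$ with $\kappa_2=0$; for the viscous term $\Lambda_3'(\psi)=-\mu\la{\nabla_{\mathrm{sym}}v,\nabla\psi}$ I would use $1_\Omega|\nabla v|\in L^{a}L^{b}(\Omega)$ to place $\Lambda_3'$ in $L^{a}W^{-1,b,\infty,0,\infty,0}(\Omega)$ with $\kappa_3=0$; and $\Lambda_4,\Lambda_5$ are handled the same way from whatever integrability $g$ and $F$ possess in the ambient setup. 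The only genuinely new point compared with Theorem~\ref{thm:fluid1} is the symmetric-gradient form of the stress, but since $\nabla_{\mathrm{sym}}v$ and $\nabla v$ have the same integrability this changes none of the estimates.

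Finally I would record the compatibility relation required by the lemma: for divergence free $\psi\in\Tcal_{\nu,sol}$ the identity $\sum_i\Lambda_i'(\psi)=\sum_j\la{s_0^j,\psi}=\la{s_0,\psi}$ is precisely~\eqref{eq:wnod}. Lemma~\ref{lem:distribution} then yields $\pi=\sum_{i=1}^{5}\pi_i$ with $\pi_i=-\Lambda_i'\circ B$ satisfying $\la{\pi,\dive\varphi}=-\sum_i\Lambda_i(\varphi)+\sum_j\la{s_0^j,\varphi}$ for all $\varphi\in\Tcal_\nu$, so that $(v,\pi)$ is a solution in the stated sense, together with the weighted and unweighted bounds of parts (1)--(4) applied to each $\pi_i$ with the data identified above.

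I expect the main obstacle to be the bookkeeping in the middle step rather than any deep estimate: one must be careful that the functionals $\Lambda_i'$, a priori defined only on the possibly non-compactly-supported fields of $\Tcal_\nu$, extend boundedly in the weighted Sobolev scale, so that the duality pairings really only feel the behaviour near $\overline{\Omega}$, and that $v\in L^{c}(\Omega)$ without a boundary weight only unlocks the unweighted instances ($b\le 0$, $\Theta=1$) of Lemma~\ref{lem:distribution}; the sharper weighted conclusions would require either extra integrability of $v$ near $\partial\Omega_t$ or the Lipschitz-in-space case $\beta=1$.
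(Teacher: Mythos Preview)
Your proposal is correct and follows exactly the approach the paper takes: the paper's entire argument is the single sentence ``We can directly apply Lemma~\ref{lem:distribution} with $A = \Tcal_{\nu}$,'' and what you have written is a careful verification of the hypotheses of that lemma (the inclusions $C_0^\infty\subset\Tcal_\nu\subset C_{\pi^*}^\infty$, the vanishing of $s_0$ on compactly supported fields, and the identification of each $\Lambda_i'$ in the appropriate weighted scale), none of which the paper spells out. Your discussion is more detailed than the paper's own treatment but entirely consistent with it.
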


We conclude the discussion by connecting these statements to the theorems in the introduction.
\begin{proof}[Proof of Theorem \ref{thmint:fluid2}]
We apply (3) of Lemma \ref{lem:distribution} to $\psi \mapsto \la{v, \partial_t \psi}$.
We have $\kappa = 1$ and 
we can use the indices $(0,p,\infty,0,\infty,1-\beta)$ on the right hand side.
Hence we conclude the norm of $\pi_{time}$ has its 
\[
W^{1,p,\infty,-1,-\alpha\beta,0}(t,\Omega)  
\]
norm bounded
by the relevant weighted $L^{p}$ norm of $v$.

To deal with the convection term $\psi \mapsto \la{v, \nabla \psi}$,
we use (2) with right hand side indices $(-1,p/2,\infty,0,\infty,1-\beta)$
to conclude the membership in 
\[
W^{0,p/2,-\beta,0,-\alpha\beta,0}(t,\Omega) = W^{0,p/2,\infty,0,\infty,0}(t,\Omega) = L^{p/2}(\Omega_t).
\]
To handle the second term on external force $\psi \mapsto \la{F, \nabla \psi}$,
we use (2) with right hand side indices $(-1,r,\infty,0,\infty,1 - \beta)$
giving 
\[
\pi_{ext,2} \in L^{s}W^{0,r,\infty,0,\infty,0}.
\]
To handle the viscosity term $\psi \mapsto \la{v, \Delta \psi}$,
we use the parameters  
\[
(-2,p,\infty,0,\infty,1-\beta)
\]  on the right hand side of (2).
We conclude the membership with boundedness as with $(-1,p,-\beta,0,\infty,0)$.
Finally, to deal with the first term of external force $\psi \mapsto \la{g, \psi}$,
we use (3) with right hand side indices $(0,r,\infty,0,\infty,1-\beta)$
to place the respective pressure term in 
\[
L^{s}W^{1,r,\infty,0,\infty,0}(\Omega).
\]
This justifies all the bounds claimed in the statement of the theorem.
\end{proof}

\begin{proof}[Proof of Theorem~\ref{thmint:fluid3}]
We start by deducing weighted bounds for the data.
Applying Proposition \ref{prop:tounweighted}
to functions $d^{\beta-1}|v|$,
$d^{\beta-1}|g|$
and $d^{\beta-1}|F|$
with ${{{{\eta}}}} = 1- \beta$,
we obtain
\begin{align*}
  &\int \left(\int |v(t,x)|^{\check{p}} \dist (x, \partial \Omega_t) ^{(\beta - 1)\check{p}} \, dx \right)^{q/\check{p}} \, dt < \infty, \\
  & \int \left( \int (|F(t,x)|^{ \check{r}}  + |g(t,x)|^{ \check{r}})\dist (x, \partial \Omega_t) ^{(\beta - 1) \check{r}} \, dx \right)^{s/ \check{r}}  dt < \infty.
\end{align*}
whenever 
\[
\frac{1-\beta}{\theta} < \min\left( \frac{1}{\check{r}} - \frac{1}{r}, \frac{1}{\check{p}} - \frac{1}{p}\right). 
\]
Hence we immediately obtain bounds for all terms except for $\psi \mapsto \la{v, \partial_t \psi}$
from Theorem \ref{thmint:fluid3}, 
provided we have all the integrability indices in the duality range $(1,\infty)$.

Consider then the term $\la{v, \partial_t \psi}$.
We use the decomposition  
from (4) of Lemma \ref{lem:distribution} 
to see that a pressure term with components
\[
\pi_{time}^{1} \in W^{-1,q}W^{1,p_1,\infty,0,\infty,0}(\Omega), \quad
  \pi_{time}^{2} \in L^{q}W^{1,p_2,\infty,0,\infty,0}(\Omega)
\]
can be constructed
provided that the condition 
\[
v \in L^{q}W^{0, p_1 ,\infty,0,\infty,-\beta + 1/\alpha}(\Omega) \cap  L^{q}W^{0,p_2,\infty,0,\infty,1-\beta}(\Omega)
\]
holds.
Here we have replaced one by infinity in the third index as the order of smoothness is zero.
Applying Proposition \ref{prop:tounweighted}
to $ d^{- 1/ \alpha + \beta }|v| $ and $d^{\beta -1}|v|$
with $q$ equal to $p_2$ and $p_1$,
${{{{\eta}}}} $ equal to $1/\alpha - \beta$ and $1- \beta$, respectively,
we see that the condition above is valid under the assumptions of Theorem \ref{thmint:fluid3}. 
\end{proof}

\begin{proof}[Proof of Theorem~\ref{thmint:fluid4}]
In addition to inserting $\beta = \theta = 1$ in all the estimates
in Theorem \ref{thmint:fluid3}
we have to show that 
\[
W_0^{-1,p,-1,0,\infty,0}(t,\Omega) \subset W^{-1,p,\infty,0,\infty,0}(t,\Omega)
\]
Obviously
\[
W_0^{-1,p,-1,0,\infty,0}(t,\Omega) \subset W^{-1,p,-1,0,\infty,0}(t,\Omega)
\]
as the functionals in the space on the left hand side act on $C^{\infty}$
whereas the functionals on the right hand side only act on $C_0^{\infty}$.
However, for $f \in C_{0}^{\infty}(\Omega)$ it follows from Hardy's inequality,
valid in bounded Lipschitz domains,
that 
\[
\no{f}_{W^{1,p',1,0,\infty,0}(t,\Omega)} \sim \no{f}_{W^{1,p'}(t,\Omega)}
\]
and hence the claim follows.
\end{proof}
\subsection*{Data availability statement}
Data sharing not applicable to this article as no datasets were generated or analysed during the current study.

\subsection*{Conflict of interest statement}
On behalf of all authors, the corresponding author states that there is no conflict of interest.

\bibliography{references}
\bibliographystyle{abbrv}

\end{document}